\numberwithin{equation}{section}
\theoremstyle{plain}
 \newtheorem{theorem}{Theorem}[section]
 \newtheorem{lemma}[theorem]{Lemma}
 \newtheorem{corollary}[theorem]{Corollary}
\theoremstyle{definition}
 \newtheorem{definition}[theorem]{Definition}
 \newtheorem{remark}[theorem]{Remark}
 \newtheorem{prooftechnique}[theorem]{Proof Technique}
 \newtheorem{problem}[theorem]{Problem}
\theoremstyle{remark}
 \newtheorem{example}[theorem]{Example}
 \newtheorem{terminology}[theorem]{Terminology}
 \newtheorem*{preparation}{Lattice theoretical preparatory part}
 \newtheorem*{computation}{Computational part}
\newenvironment{enumeratei}{\begin{enumerate}[\quad\upshape (i)]} {\end{enumerate}}
\newcommand \alg [1] {\mathcal #1}
\newcommand \dom [1] {\textup{Dom}(#1)}
\newcommand \many{\boldsymbol\sigma}
\newcommand \smany{$\many$-many}
\newcommand \krlist {\mathcal L_{\textup{KR}}}
\newcommand \dual [1] {#1^{\boldsymbol\delta}}
\newcommand \azeset[1] {(C#1)}
\newcommand \eset[1] {\par \azeset{#1}}
\newcommand \aceset[1] {#1}
\newcommand \ceset[1] {\par \aceset{#1}}
\newcommand \vonal {\noalign{\hrule}}
\newcommand \fem {F_1^-}
\DeclareMathOperator \Sub {Sub}
\newcommand\set [1]{\{#1\}}
\renewcommand \phi {\varphi}
\newcommand \url [1] {{\texttt{#1}}}
\newcommand \red [1] {{\color{red}#1\color{black}}}
\newcommand \nothing [1] {}
\newcommand \csakau [1] {}
\begin{document}
\title[Eighty-three sublattices]
{Eighty-three sublattices and planarity}

\author[G.\ Cz\'edli]{G\'abor Cz\'edli}
\address{University of Szeged, Bolyai Institute, Szeged,
Aradi v\'ertan\'uk tere 1, Hungary 6720}
\email{czedli@math.u-szeged.hu}
\urladdr{http://www.math.u-szeged.hu/~czedli/}

\thanks{This research of was supported by the Hungarian Research, Development and Innovation Office under grant number KH 126581.}

\subjclass{06B99}

\keywords{Finite lattice, planar lattice, sublattice, number of sublattices, subuniverse, computer-assisted proof}

\dedicatory{Dedicated to professor George A.\ Gr\"atzer on his eighty-third birthday}

\begin{abstract} Let $L$ be a finite $n$-element lattice. We prove that if $L$ has at least $83\cdot 2^{n-8}$ sublattices, then $L$ is planar. For $n>8$, this result is sharp since  there is a non-planar lattice with exactly $83\cdot 2^{n-8}-1$ sublattices.
\end{abstract}

\date{\red{\hfill  July 1, 2019}}

\maketitle

\section{Our result  and introduction} 
A finite lattice is said to be \emph{planar} if it has a Hasse diagram that is also a planar representation of a graph. Our goal is to prove that finite lattices with many sublattices are planar. Namely, we are going to prove the following theorem.

\begin{theorem}\label{thmmain} Let $L$ be a finite lattice, and let  $n:=|L|$ denote the number of its elements. If $L$ has at least $83\cdot 2^{n-8}$ sublattices, then it is a planar lattice.  
\end{theorem}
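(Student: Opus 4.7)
The plan is to argue by contrapositive: assume $L$ is non-planar and bound $|\Sub(L)|$ above by $83\cdot 2^{n-8}-1$. The starting point is a Kuratowski-type characterization (due to Kelly and Rival) of non-planarity for finite lattices via a finite list of forbidden configurations. From such a result one extracts a small non-planar sublattice $K\subseteq L$. The appearance of $n-8$ in the exponent strongly suggests $|K|=8$: namely, $K$ should be (one of finitely many) $8$-element non-planar lattices, and the sharpness claim is witnessed by a specific such $K$ with exactly $82$ sublattices, extended by a chain so that each new element roughly doubles the sublattice count.

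The main combinatorial step would be a doubling reduction on $n-|K|$. For each ``peelable'' element $x\in L\setminus K$, write $L':=L\setminus\set x$ and try to establish $|\Sub(L)|\leq 2\,|\Sub(L')|$. The easy half is free: each $S\in\Sub(L)$ either avoids $x$ (hence lies in $\Sub(L')$) or contains $x$. The hard half requires a careful choice of the element to peel---typically a doubly irreducible element in $L\setminus K$, or an appropriately generic element---so that the sublattices through $x$ inject into $\Sub(L')$ after a controlled adjustment. Iterating the reduction $n-8$ times would reduce matters to the base case $n=8$.

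The base case is then a finite verification: enumerate all $8$-element non-planar lattices and check that none has more than $82$ sublattices. The ``computer-assisted proof'' keyword indicates this step is performed by machine; smaller non-planar seeds (such as $M_4$ with $6$ elements) are absorbed either by extending them inside $L$ up to $8$ elements, or by handling $n\leq 7$ as a separate finite check.

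The principal obstacle I anticipate is the tightness of the constant $83$. A soft doubling argument typically produces a strictly larger multiplicative constant, so the inductive step must be delicate: one likely needs an improved bound of the form $|\Sub(L)|\leq 2\,|\Sub(L')|-c$ for some positive $c$ in well-chosen cases, or a careful choice of the order in which elements are peeled to guarantee strict inequality often enough. Coupled with the exhaustive base-case enumeration needed to pin down the exact value $83$, this is where both the lattice-theoretic ingenuity and the computational content of the proof should concentrate.
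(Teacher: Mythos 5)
Your outline breaks at the very first reduction: you write that from the Kelly--Rival characterization ``one extracts a small non-planar \emph{sublattice} $K\subseteq L$,'' but Theorem~\ref{thmKellyRival} only provides a forbidden \emph{subposet} $X\in\krlist$, and the sublattice of $L$ generated by such a subposet has no bounded size in general. Consequently there is no base case of bounded cardinality to peel down to, and your ``finite verification over $8$-element non-planar lattices'' does not exist as a step of the proof. (The numerology is also off: the extremal configuration is the $9$-element lattice $F_0$ with $|\Sub(F_0)|=166$, i.e.\ $165$ sublattices, and the factor $2^{8-n}$ is merely a normalization; the smallest members of $\krlist$ range from $8$ to $11$ elements.) If the forbidden configuration could always be taken to be a sublattice, the theorem would follow immediately from the easy monotonicity lemma for weak partial subalgebras together with the machine computation $\many(F_0)=83$; the paper states exactly this as Problem~\ref{problemBG}\eqref{problemBGb} and reports that it cannot solve it.

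Relatedly, the part you flag as delicate --- the doubling step --- is in fact the trivial part: Lemma~\ref{lemmapartext} gives $|\Sub(\alg A)|\leq 2^{k}\,|\Sub(\alg B)|$ in one stroke for any weak partial subalgebra on $k$ fewer elements, with no choice of peeling order or doubly irreducible elements needed. The genuine difficulty, which your proposal does not anticipate, is that when $X\in\krlist$ sits in $L$ only as a subposet, the joins and meets of its elements computed in $L$ are unknown: each may equal an element already present or be a new element of $L$, and Example~\ref{exampledifficulty} shows that introducing a new element can make the resulting bound on $\many(L)$ \emph{worse}, so one cannot argue that the sublattice case is the worst case. The paper therefore first normalizes the embedding (replacing elements so that certain covering joins and meets are realized, as in Lemma~\ref{lemmaencaps2ladder} and the preparations for $F_0$), and then runs a computer-assisted case tree over all remaining alternatives for each of the finitely many relevant $X$, reducing the $\many$-value below $83$ in every leaf. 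That case analysis is the substance of the proof and is absent from your plan.
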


Another variant of this result together with a comment on its sharpness will be stated in Theorem~\ref{thmreform}.

\subsection*{Notes on the dedication}
As a coincidence, the number \emph{eighty-three} plays a key role in Theorem~\ref{thmmain}, and I found this theorem recently, in the same year when professor George Gr\"atzer, the founder of Algebra Universalis, celebrates his \emph{eighty-third} birthday. For more about him, the reader is referred to my biographic paper \cite{czgonGG} and the interview \cite{czggginterview} with him.  Furthermore, the topic of the present paper is close to his current research interest on planar lattices; this interest has been witnessed, say, by 
Cz\'edli and Gr\"atzer~ \cite{czgggchapter} and \cite{czgggresection},
Cz\'edli, Gr\"atzer, and Lakser~\cite{czggglakser},
Gr\"atzer \cite{ggnotes10}, \cite{ggsection14}, \cite{ggONczg}, \cite{ggswing15}, \cite{ggforkcon16}, and \cite{ggtrajcon18}, 
Gr\"atzer and Knapp~\cite{ggknapp1}, \cite{ggknapp2}, \cite{ggknappAU}, \cite{ggknapp3}, and \cite{ggknapp4}, Gr\"atzer and Lakser~\cite{gglakser92},  Gr\"atzer, Lakser, and Schmidt~\cite{gglakserscht}, Gr\"atzer and Quackenbush~\cite{ggqbush}, 
Gr\"atzer and Schmidt~\cite{ggscht14}, and  Gr\"atzer and Wares~\cite{ggwares}.
These facts motivate the dedication.

\begin{remark}\label{remarkmantissa}
Although $41.5\cdot 2^{n-7}$, $20.75\cdot 2^{n-6}$, $10.375\cdot 2^{n-5}$, \dots{} and  $166\cdot 2^{n-9}$, $332\cdot 2^{n-10}$, $664\cdot 2^{n-11}$, \dots{} are all equal to $83\cdot 2^{n-8}$, we want to avoid fractions as well as large coefficients of powers of 2. This explains the formulation of Theorem~\ref{thmmain}.
\end{remark}

\begin{remark}\label{remarksharp}
 For $n\geq9$, Theorem~\ref{thmmain} is sharp, since we will present an $n$-element non-planar lattice with exactly $83\cdot 2^{n-8}-1$ sublattices. For $n<9$, Theorem~\ref{thmmain} can easily be made sharp as follows. Whenever $n\leq 7$, every $n$-element lattice is planar, regardless the number of its sublattices. While the eight-element boolean lattice has exactly 73 sublattices, every eight-element lattice with at least $74=74\cdot 2^{8-8}$ sublattices is planar.  
\end{remark}

Next, we mention some earlier results that motivate the present paper. As a counterpart of Theorem~\ref{thmmain}, finite lattices with many \emph{congruences} are also planar; see Cz\'edli \cite{czglatmancplanar} for details. Finite lattices with ``very many" congruences or sublattices have been described by Ahmed and Horv\'ath~\cite{delbrineszter}, Cz\'edli~\cite{czgnotelatmanyC}, Cz\'edli and Horv\'ath~\cite{czgkhe}, and Mure\c san and Kulin~\cite{kulinmuresan}.  

\subsection*{Outline} The rest of the paper is devoted to the proof of Theorem~\ref{thmmain}. 
In Section~\ref{sectiontools}, we 
recall the main result of Kelly and Rival~\cite{kellyrival}; this deep result will be the main tool used in the paper. Some easy lemmas and  the proof of Remark~\ref{remarksharp}  are also presented, and we introduce a terminology that allows us to formulate Theorem~\ref{thmmain} in an equivalent and more convenient form; see Theorem~\ref{thmreform}.  
Also, this section points out some difficulties explaining why we do not see a computer-free way to prove  Theorem~\ref{thmreform} (equivalently, Theorem~\ref{thmmain}) and why a lot of human effort is needed in addition to the brutal force of computers. 
Section~\ref{sectionrestlemmas}
gives some more details of this computer-assisted effort but
the proofs of some lemmas stated there are available only from separate files or  from the appendices of the \csakau{extended version of the}%
paper. Also, Section~\ref{sectionrestlemmas} combines many of our lemmas and corollaries to complete the proof of  Theorem~\ref{thmreform} and, thus,  Theorem~\ref{thmmain}.

\section{Tools and difficulties}\label{sectiontools}
\subsection{Relative number of subuniverses}
Let $F$ be a set of binary operation symbols.
By a \emph{binary partial algebra} $\alg A$ of type $F$ we mean a structure 
$\alg A=(A; F_A)$ such that $A$ is a nonempty set, $F_A=\set{f_A: f\in F}$, and for each $f\in F$,  $f_A$ is a map from a subset $\dom {f_A}$ of $A^2$ to $A$. That is, $f_A$ is a binary \emph{partial operation} on $A$.
If $\dom {f_A}=A^2$ for all $f\in F$, then $\alg A$ is a \emph{binary algebra} (without the adjective ``partial'').  In particular, every lattice is a binary algebra; note that we write $\vee$ and $\wedge$ instead of $\vee_A$ and $\wedge_A$ when the meaning is clear from the context. 
A \emph{subuniverse} of  $\alg A$
is a subset $X$ of $A$ such that $X$ is closed with respect to all partial operations, that is, whenever $x,y\in X$, $f\in F$ and $(x,y)\in\dom {f_A}$, then $f_A(x,y)\in X$. The set of subuniverses of $\alg A$ will be denoted by $\Sub(\alg A)$. 
For a lattice $\alg L=(L;\set{\vee,\wedge})$, we will write $L$ rather than $\alg L$. Note that the number of sublattices of $L$ is $|\Sub(L)|-1$, since the set of sublattices of $L$ is $\Sub(L)\setminus\set{\emptyset}$. If $\alg B=(B,F_B)$ with $B\subseteq A$ is another binary partial algebra  of type $F$ such that $\dom{F_B}\subseteq B^2\cap \dom{F_A}$ for every $f\in F$  
and $f_B(x,y)=f_A(x,y)$ for all $(x,y)\in \dom{F_B}$, then $\alg B$ is said to be a \emph{weak partial subalgebra} of $\alg A$.

It is straightforward to drop the adjective ``binary'' from the concepts defined above. Even if this adjective is dropped in Lemma~\ref{lemmapartext}, to be stated soon, we will use this lemma only for the binary case. 
All lattices, posets, and partial algebras in this paper are automatically assumed to be \emph{finite} even if this is not repeated all the time. 

This paper is about lattices with \emph{many} sublattices. 
Large lattices have a lot of subuniverses and sublattices since every singleton subset of a lattice is a sublattice. So it is reasonable to define the meaning of ``many'' with the help of the following notation.

\begin{definition}
The \emph{relative number of subuniverses} of an $n$-element finite binary partial algebra $\alg A=(A,F_A)$ is defined to be and denoted by
\[
\many(\alg A):=|\Sub(\alg A)|\cdot 2^{8-n}.
\] 
Furthermore, we say that a finite lattice $L$ has \emph{\smany{} sublattices} or, in other words, it has \emph{\smany{}  subuniverses} if $\many(L)>83$.
\end{definition}

This concept and notation will play a crucial role in the rest of the paper. Since $|\Sub(L)|$ is larger than the number of sublattices by 1, we can reformulate Theorem~\ref{thmmain} and a part of Remark~\ref{remarksharp} as follows.

\begin{theorem}\label{thmreform}
If $L$ is a finite lattice such that $\many(L)>83$, then $L$ is planar. In other words, finite lattices with \smany{} sublattices are planar. Furthermore, for every natural number $n\geq 9$, there exists an $n$-element lattice $L$ such that $\many(L)=83$ and $L$ is not planar.
\end{theorem}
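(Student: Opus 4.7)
The plan has two parts, matching the two claims in Theorem~\ref{thmreform}. For the first (upper-bound) part, my approach is to invoke the Kelly--Rival characterization of planar finite lattices recalled in Section~\ref{sectiontools}: a finite lattice is non-planar if and only if it contains one of finitely many specified configurations $F_1,\dots,F_k$ as a substructure in a suitable sense. The problem therefore reduces to verifying, for each $F_i$, that any finite lattice $L$ containing $F_i$ satisfies $\many(L)\leq 83$.

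The key reduction is a monotonicity lemma that I would state first. Viewing each $F_i$ as a binary partial algebra in the sense of Section~\ref{sectiontools}, with the partial operations coming from the ambient lattice whenever the result stays inside $F_i$, I claim that if $L$ is a finite lattice and $F_i$ is a weak partial subalgebra of $L$, then $\many(L)\leq\many(F_i)$. The proof is short: the restriction map $\Sub(L)\to\Sub(F_i)$, $S\mapsto S\cap F_i$, is well defined because every partial operation on $F_i$ agrees with the corresponding total operation on $L$, and its fibres have size at most $2^{|L|-|F_i|}$ since elements of $L\setminus F_i$ may be included freely; multiplying through by $2^{8-|L|}$ and $2^{8-|F_i|}$ gives the asserted inequality. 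Once this lemma is in hand, the first half of Theorem~\ref{thmreform} boils down to the finite verification $\many(F_i)\leq 83$ for every $F_i$. The main obstacle, and the reason the author announces a computer-assisted argument, lies precisely here: the Kelly--Rival list is not short, the values $\many(F_i)$ are not bounded by any obvious structural estimate, and the choice of partial structure on each $F_i$ needs care so that the restriction map above is well defined while still retaining enough operations to keep the subuniverse count controlled.

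For the sharpness claim, I plan a two-step construction. First, I would exhibit a single $9$-element non-planar lattice $L_9$ with $\many(L_9)=83$, i.e., with exactly $166$ subuniverses and hence $165$ sublattices; candidates can be sought among the minimal non-planar lattices from the Kelly--Rival list and verified by brute-force enumeration of their subuniverse lattices. Second, for any $n>9$, I form $L_n$ inductively by adjoining a new top element $x$ above the current top of $L_{n-1}$. Adjoining such an $x$ preserves non-planarity because $L_{n-1}$ remains a sublattice and non-planarity is inherited upward in this sense, and it exactly doubles the subuniverse count: for each $S\in\Sub(L_{n-1})$ both $S$ and $S\cup\set x$ are subuniverses of $L_n$ (since $x$ is a top, no new joins or meets arise inside $S\cup\set x$), and every subuniverse of $L_n$ has one of these two shapes. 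Hence $|\Sub(L_n)|=2|\Sub(L_{n-1})|$, and combined with the extra factor $2^{-1}$ in the definition of $\many$ this yields $\many(L_n)=\many(L_{n-1})=83$. The only genuine obstacle in this second part is locating the base lattice $L_9$; once it is found, the extension argument is entirely routine.
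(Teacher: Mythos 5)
Your monotonicity lemma is correct (it is the paper's Lemma~\ref{lemmapartext}), and your sharpness construction is essentially the paper's: the base lattice is $F_0$, which has $\many(F_0)=83$, and adjoining new top elements one at a time is the same as forming the ordinal sum of $F_0$ with a chain. The gap is in the first part, at the step you describe as ``the finite verification $\many(F_i)\leq 83$''. Theorem~\ref{thmKellyRival} forbids the members of $\krlist$ only as \emph{subposets} of $L$, and a subposet does not come equipped with the lattice operations of $F_i$ as partial operations inherited from $L$: for $x,y\in F_i$ the element $x\vee_L y$ need not lie in $F_i$ at all, so the inherited weak partial subalgebra structure on the set $F_i$ may have very few (in the extreme, no) operations defined, in which case its $\many$-value is close to $2^8=256$ and your lemma gives nothing. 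Whether one can always pass from ``$F_i$ is a subposet of $L$'' to ``$\many(L)\leq\many(F_i)$ computed for the full lattice $F_i$'' is exactly Problem~\ref{problemBG} of the paper, which the author explicitly leaves open; Example~\ref{exampledifficulty} shows that replacing an equality $c\vee_L e=g$ by $c\vee_L e=x<g$ with a new element $x$ can make the resulting estimate \emph{worse}, so there is no a priori reason that the ``sublattice'' case is the worst one. The paper's actual proof spends nearly all of its effort here: for each small member of $\krlist$ it first replaces elements of the subposet one by one to force certain joins and meets to be realized inside it (the ``lattice theoretical preparation'', e.g.\ Lemma~\ref{lemmaencaps2ladder} and \eqref{eqfnnTtl}), and then runs a tree of case distinctions, adjoining new elements of $L$ to the partial algebra whenever a join or meet escapes, until every leaf of the tree has $\many$-value at most $83$ (thirteen leaves for $F_0$, sixty-seven for $H_0$). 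None of this is subsumed by your reduction.

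A second, smaller issue: $\krlist$ is infinite ($A_n$, $E_n$, $\dual{E_n}$, $F_n$, $G_n$, $H_n$ for all $n\geq0$), not a finite list $F_1,\dots,F_k$. Before any finite verification you therefore also need the reductions the paper obtains from $K_5$ (Lemma~\ref{lemmaKot}, which disposes of $G_n$ and $H_n$ for $n\geq1$) and from the eight-element fence (Lemma~\ref{lemma8fence}, which disposes of $A_n$, $E_{n+1}$, and $F_{n+1}$ for $n\geq1$), together with duality.
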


The importance of the concepts introduced in this section so far is well explained by the following easy lemma.

\begin{lemma}\label{lemmapartext}
If $\alg B=(B,F_B)$ is a weak partial subalgebra of a finite partial 
algebra  $\alg A=(A,F_A)$, then $\many(\alg A)\leq \many(\alg B)$.  
\end{lemma}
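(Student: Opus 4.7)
The plan is to exhibit a map from $\Sub(\alg A)$ to $\Sub(\alg B)$ whose fibres are small. Specifically, I would define
\[
\Phi\colon \Sub(\alg A)\to \Sub(\alg B),\qquad X\mapsto X\cap B,
\]
and first verify that $\Phi$ is well defined. Pick $x,y\in X\cap B$, $f\in F$, and $(x,y)\in\dom{f_B}$. Since $\dom{f_B}\subseteq B^2\cap\dom{f_A}$ and $f_B$ agrees with $f_A$ on $\dom{f_B}$, we have $f_B(x,y)=f_A(x,y)$, and closedness of $X$ under the total (or partial) operations of $\alg A$ gives $f_A(x,y)\in X$. Since $f_B$ is a partial operation on $B$, its value lies in $B$, so $f_B(x,y)\in X\cap B$, confirming $X\cap B\in\Sub(\alg B)$.

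Next I would bound the size of each fibre of $\Phi$. For a fixed $Y\in\Sub(\alg B)$, any preimage $X$ satisfies $X\cap B=Y$, so $X$ is completely determined by the pair $(Y,X\setminus B)$ with $X\setminus B\subseteq A\setminus B$. Hence
\[
|\Phi^{-1}(Y)|\leq 2^{|A\setminus B|}=2^{n-m},
\]
where $n=|A|$ and $m=|B|$. Summing over $Y\in\Sub(\alg B)$ yields
\[
|\Sub(\alg A)|\leq 2^{n-m}\cdot|\Sub(\alg B)|.
\]
Multiplying both sides by $2^{8-n}$ converts this into
\[
\many(\alg A)=|\Sub(\alg A)|\cdot 2^{8-n}\leq|\Sub(\alg B)|\cdot 2^{8-m}=\many(\alg B),
\]
which is exactly the desired inequality.

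I do not expect any real obstacle here; the only mild subtlety is keeping the partial-operation conventions straight, namely using the hypothesis $\dom{f_B}\subseteq B^2\cap\dom{f_A}$ together with $f_B=f_A$ on $\dom{f_B}$ to transport closure from $\alg A$ down to $\alg B$. Everything else is a counting argument exploiting the normalizing factor $2^{8-n}$ in the definition of $\many$, which is precisely calibrated so that deleting one element of the underlying set can at most double the relative number of subuniverses, thereby matching the at-most-doubling of the fibre bound when $n-m$ increases by $1$.
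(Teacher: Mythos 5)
Your proof is correct and is essentially the paper's own argument: the equivalence relation $X\sim Y\Leftrightarrow X\cap B=Y\cap B$ used in the paper is precisely the kernel of your map $\Phi$, the blocks of $\sim$ are your fibres, and the $2^{n-m}$ bound and the final rescaling by $2^{8-n}$ coincide. Your explicit verification that $X\cap B\in\Sub(\alg B)$ (which the paper only asserts) is a welcome addition, but the route is the same.
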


\begin{proof} Let $m:=|B|$ and $n:=|A|$. Then  $k:=n-m=|A\setminus B|\geq 0$. Define an equivalence relation $\sim$ on $\Sub(\alg A)$ by letting $X\sim Y$ mean that $X\cap B=Y\cap B$.  Since $X\cap B\in\Sub(\alg B)$ for every $X\in\Sub(\alg A)$, this equivalence has at most $|\Sub(\alg B)|$ blocks. Every block of $\sim$ is a subset of
$\set{U\cup X: X\subseteq  A\setminus B}$ for some $U\in\Sub(\alg B)$. Since $A\setminus B$ has $2^k$ subsets, 
every block of $\sim$ consists of at most $2^k$ elements of $\Sub(\alg A)$. Therefore, $ |\Sub(\alg A)| \leq |\Sub(\alg B)|\cdot 2^k$. Dividing this inequality by $ 2^{n-8} = 2^{m-8}\cdot 2^k$, we obtain the validity of the lemma.
\end{proof}

\begin{figure}[htb] 
\centerline
{\includegraphics[scale=1.0]{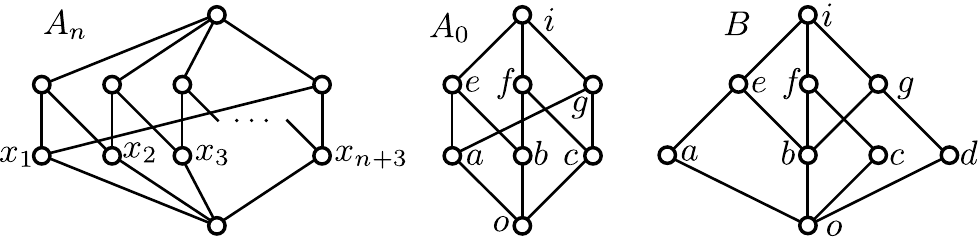}}
\caption{$A_n$, the boolean lattice $A_0$, and $B$
\label{figAB}}
\end{figure}%

\subsection{The Kelly--Rival list}
For a poset $P$, its dual will be denoted by $\dual P$.
With reference to Kelly and Rival~\cite{kellyrival} or, equivalently, to Figures~\ref{figAB}--\ref{figGH}, the \emph{Kelly--Rival list} of lattices is defined as follows.
\[
\krlist:=\set{A_n, E_n, \dual{E_n}, F_n, G_n, H_n: n\geq 0} \cup \set{B,\dual B, C, \dual C, D,\dual D}.
\]
Note that $A_n$, $F_n$, $G_n$, and $H_n$ are selfdual lattices. The key tool we need is the following deep result.

\begin{theorem}[{Kelly and Rival~\cite{kellyrival}}]\label{thmKellyRival}
A finite lattice is planar if and only if it does not contain any lattice in $\krlist$ as a subposet.
\end{theorem}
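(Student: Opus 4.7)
The plan is to establish the two directions separately, with the ``only if'' direction being a finite verification and the ``if'' direction carrying the substantial content. For the ``only if'' direction I would show that each lattice in $\krlist$ is itself non-planar, so that containing any of them as a subposet forces non-planarity (any diagram of $L$ induces a diagram of any subposet, obtained by deleting the missing elements and concatenating the affected covers). For the six sporadic lattices $B$, $\dual B$, $C$, $\dual C$, $D$, $\dual D$, one inspects each Hasse diagram and exhibits a $K_{3,3}$ or $K_5$ subdivision in the underlying abstract graph. For the infinite families $A_n$, $E_n$, $\dual{E_n}$, $F_n$, $G_n$, $H_n$, one checks the base case $n=0$ and observes that passing from $n$ to $n+1$ merely inserts new elements along an existing chain, which cannot repair the pre-existing crossing obstruction.

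For the ``if'' direction, I would work contrapositively: assume that $L$ is a finite non-planar lattice and produce a subposet of $L$ isomorphic to some member of $\krlist$. The plan is to combine Kuratowski's theorem with the order structure in four steps. First, apply Kuratowski to the Hasse diagram of $L$ viewed as an abstract graph, yielding a subdivision $H$ of $K_5$ or $K_{3,3}$. Second, choose such an $H$ minimally, so that each branch path becomes monotone with respect to the order of $L$; any non-monotone branch path can be shortened by replacing a ``peak'' or ``valley'' with the appropriate join or meet, and here the hypothesis that $L$ is a lattice (rather than merely a poset) becomes essential. Third, classify the resulting configuration of branch vertices according to the bipartition induced by the monotone orientation: in the $K_{3,3}$ case this amounts to recording how many branch vertices are ``low'' and how many are ``high'' relative to the paths, and the $K_5$ case is analogous. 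Fourth, match each surviving configuration against the twelve sporadic shapes plus the six infinite families in $\krlist$, with the family parameter $n$ recording the length of a chain inside a branch path, and verify that each configuration does embed as a subposet.

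The main obstacle is the fourth step: turning a Kuratowski subdivision into a \emph{named} obstruction from $\krlist$, as opposed to some unstructured non-planar subposet. A generic subdivision admits many competing refinements, and the lattice operations of $L$ can create additional comparabilities that spoil the intended embedding; conversely, one must exploit those same operations to normalise the subdivision. Controlling this trade-off --- via a carefully chosen minimality or extremality argument that simultaneously forbids both too many and too few comparabilities --- is the delicate core of the original Kelly--Rival argument. A secondary difficulty is bookkeeping: the $E$, $F$, $G$, $H$ families look superficially alike, and one must argue that exactly one of them, with a specific parameter value, is forced in each case.
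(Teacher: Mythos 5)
The paper offers no proof of this theorem: it is imported wholesale from Kelly and Rival~\cite{kellyrival} and used as a black box, so the only question is whether your argument would stand on its own. It would not, for two concrete reasons. First, your ``only if'' direction silently relies on the lemma that a finite lattice occurring as a subposet of a planar lattice is itself planar, and your one-line justification of that lemma is wrong: a covering pair $x\prec y$ of the subposet need not be joined by a single chain of deleted elements in the Hasse diagram of $L$ --- the interval $[x,y]_L$ can be an arbitrary lattice whose diagram is nothing like a path --- so one cannot simply ``delete the missing elements and concatenate the affected covers.'' Subposets of planar posets are not planar in general; that the inheritance does hold when the subposet is a lattice is itself one of the substantive lemmas of Kelly and Rival's paper and needs its own proof. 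Second, the ``if'' direction is a plan rather than an argument: you yourself identify the passage from a Kuratowski subdivision to a \emph{named} member of $\krlist$ as ``the delicate core'' and leave it unresolved, and the earlier normalization step is also shaky, since the join of a ``peak'' on a branch path need not be a vertex of the chosen subgraph at all, so replacing the peak by that join can destroy the subdivision instead of shortening it. For the record, Kelly and Rival do not argue via Kuratowski's theorem; their proof is an intrinsic order-theoretic analysis built on maximal chains, the left/right relation on the components of their complements, and a long case analysis --- which is precisely why the present paper treats the theorem as a deep imported tool rather than reproving it.
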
 

\begin{figure}[htb] 
\centerline
{\includegraphics[scale=1.0]{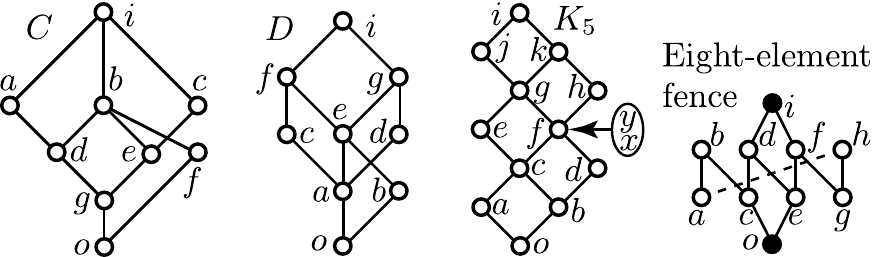}}
\caption{Lattices $C$, $D$, $K_5$, and the eight-element fence; disregard $x$ and $y$  in the oval, the black-filled elements, and the dashed line
\label{figCDK}}
\end{figure}%

Lemma~\ref{lemmapartext} and Theorem~\ref{thmKellyRival} raise the following problem; $B,C,\dots,H_0$ still denote lattices in $\krlist$. Note that being a subposet is a weaker assumption than being a sublattice.

\begin{problem}\label{problemBG}
Let $X$ and $L$ be finite lattices such that $X$ is a \emph{subposet} of $L$.
\begin{enumeratei}
\item\label{problemBGa} Is $\many(L)\leq\many(X)$ necessarily true in this case?
\item\label{problemBGb} With the additional assumption that
\[X\in\set{B,C,D,E_0,E_1,F_0,G_0,H_0},\]
is $\many(L)\leq\many(X)$ necessarily true?
\end{enumeratei}
\end{problem}

If we could answer at least
 least part \eqref{problemBGb} of Problem~\ref{problemBG} affirmatively, 
then the proof of Theorem~\ref{thmreform} would
only require  the lemmas of (the present) Section~\ref{sectiontools} and an easy application of a straightforward computer program. Later, Remark~\ref{remarkWmthnK} and Example~\ref{exampledifficulty} will point out why Problem~\ref{problemBG} is not as easy as it may look at first sight.

\begin{remark}\label{remarkF0htp}
There are a lot of finite lattices $X$ such that for every finite lattice $L$, 
$\many(L)\leq \many(X)$ if $X$ is a subposet of $L$. For example, $X=F_0$ has this property.
\end{remark}

\begin{proof}[Proof of Remark~\ref{remarkF0htp}] Every finite chain obviously has the property above, whence there are ``a lot of'' such lattices. 
Although the proof of Lemma~\ref{lemmaonF0} will, in effect, establish the above property of $X=F_0$, we can present a short argument for this fact right now. (But this short argument is not independent from Lemma~\ref{lemmaonF0} and it relies on Theorem~\ref{thmreform} that has not yet been proved at this stage of the paper.)  For the sake of contradiction, suppose that $X=F_0$ is a subposet of $L$ but $\many(L) > \many(F_0)$. For a computer, it is straightforward to show that $\many(F_0)=83$; see Lemma~\ref{lemmakrlat} later. So $\many(L)>83$, and we obtain 
from Theorem~\ref{thmreform} that $L$ is planar. Hence, by Theorem~\ref{thmKellyRival}, $F_0$ cannot be a subposet of $L$, which is a contradiction.
\end{proof}

\begin{figure}[htb] 
\centerline
{\includegraphics[scale=1.0]{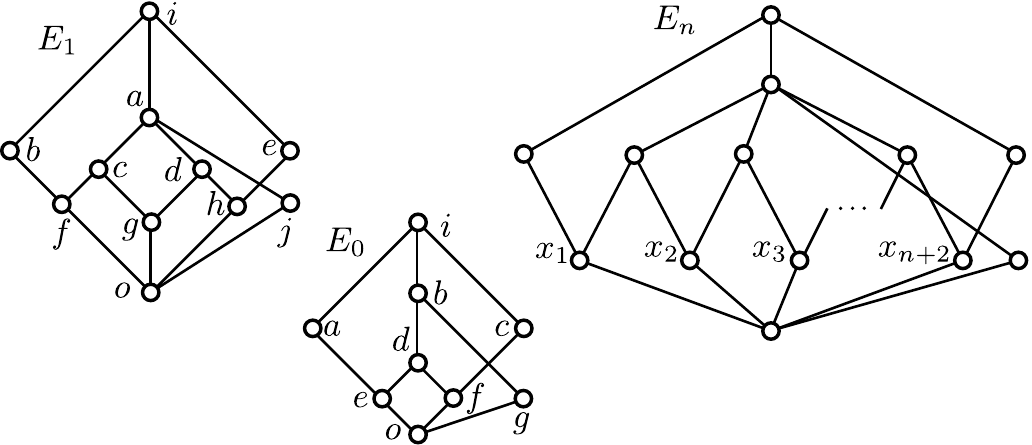}}
\caption{$E_n$ and, in particular, $E_0$ and $E_1$
\label{figE}}
\end{figure}%
\begin{figure}[htb] 
\centerline
{\includegraphics[scale=1.0]{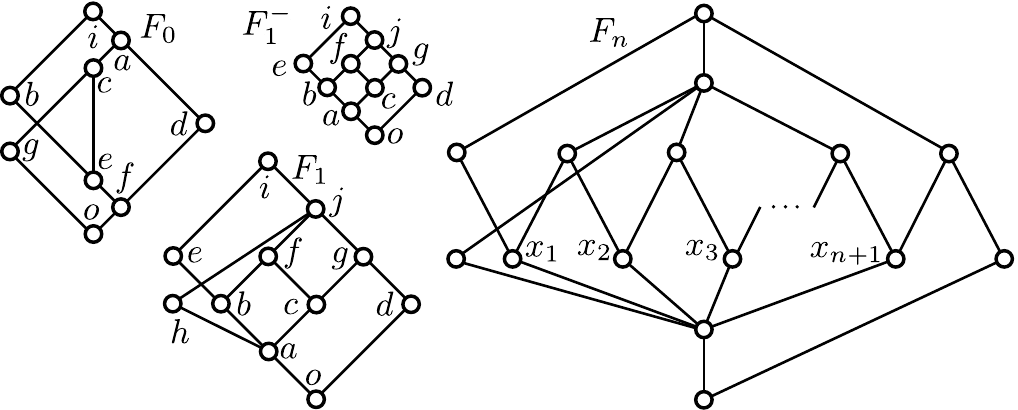}}
\caption{The encapsulated 2-ladder $\fem $, $F_n$,  and, in particular, $F_0$ and $F_1$
\label{figF}}
\end{figure}%
\begin{figure}[htb] 
\centerline
{\includegraphics[scale=1.0]{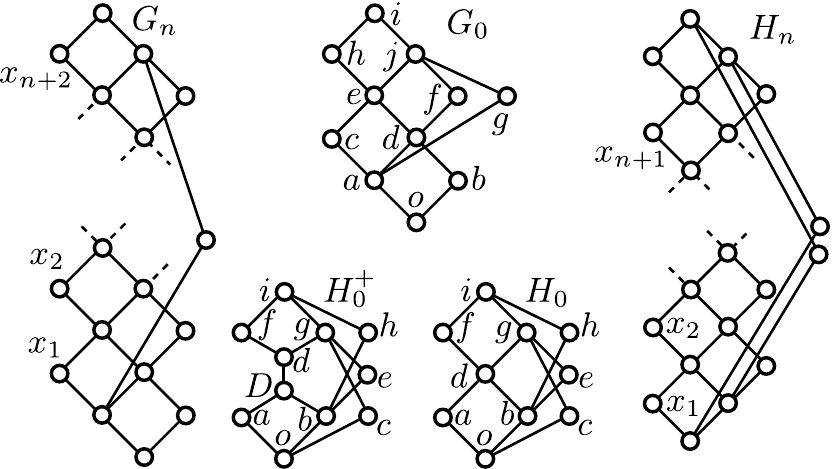}}
\caption{$G_n$ and $H_n$, in particular, $G_0$ and $H_0$, and the auxiliary lattice $H_0^+$
\label{figGH}}
\end{figure}%

\subsection{A computer program} Since it would be a very tedious task to compute $\many(X)$ manually even for the smallest lattice $X\in\krlist$, we have developed a straightforward computer program for Windows 10 to do it. This program, called \emph{subsize}, is downloadable from the authors website. The input of the program is an unformatted text file describing a finite binary partial algebra $\alg A=(A;F)$; there are several word processors that can produce such a file. 
In particular, the description of $\alg A$ includes a list of strings $x\ast y=z$ of length five where $\ast$ is an operation symbol in $F$, $(x,y)\in\dom{f_A}$ and $f_A(x,y)=z$; these strings are called \emph{constraints} in the input file.
The output, $\many(\alg A)$, is displayed on the screen and saved into a text file. The algorithm is trivial: the program lists all the $2^{|A|}$ subsets of $A$ and counts those that are closed with respect to all constraints.

\begin{remark} There are two kinds of difficulties we have to face.  First, we could not solve Problem~\ref{problemBG}; see the paragraph following it. Second, the running time of our program depends exponentially on the input size $|A|$. Hence, a lot of theoretical considerations are necessary before resorting to the program and what is even worse, many cases have to be input into the program. Because of the exponential time, it is not clear (and it is not hopeful) whether the appropriate cases could be found by a much more involved (and so less reliable) computer program without a lot of human work. So the program is simple, we believe it is reliable, and it is not to hard to write another program to test our input files. On the other hand, the exceptionally tedious work to find the appropriate cases and to create the input files needed several weeks. 
\end{remark}

However, it  is quite easy to obtain the following statement with the help of our computer program.

\begin{lemma}[on small Kelly-Rival lattices\label{lemmakrlat}]\ 
\begin{enumeratei}
\item\label{lemmakrlata} For the smallest lattices in $\krlist$, we have that
$\many(A_0)=74$, $\many(B)=54$, $\many(C)=68.5$, $\many(D)=76$, $\many(E_0)=60.5$, $\many(F_0)=83$, $\many(G_0)=54.25$, and $\many(H_0)=49.75$.
\item\label{lemmakrlatb} We also have that $\many(E_1)=31.125$ and $\many(F_1)=41.125$.
\end{enumeratei}
\end{lemma}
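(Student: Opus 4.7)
The plan is to apply the \emph{subsize} computer program described above to each of the ten lattices in the statement. Concretely, for each $X\in\set{A_0,B,C,D,E_0,F_0,G_0,H_0,E_1,F_1}$, one fixes a labelling of the elements read off Figures~\ref{figAB}--\ref{figGH}, computes the full join and meet tables from the Hasse diagram, and encodes every identity $x\vee y=z$ and $x\wedge y=z$ into a constraint line of the text input file accepted by subsize. Running the program enumerates all $2^{|X|}$ subsets of $X$, counts those closed under both operations, and outputs $|\Sub(X)|$; dividing by $2^{|X|-8}$ then yields $\many(X)$.

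All ten lattices involved are small---$A_0$ has eight elements and the remaining ones only a few more---so the brute-force enumeration finishes essentially instantly, and running time is not the obstacle. The real concern is correctness of the inputs, because a single mistyped entry in a join or meet table would corrupt the count. To guard against this, I would double-check each operation table against the corresponding figure, and then use Lemma~\ref{lemmapartext} as a passive sanity monitor: whenever one of the listed lattices is (obviously) a weak partial subalgebra of another, the computed values must respect $\many(\alg A)\leq\many(\alg B)$. An independent cross-check is also available for $A_0$: the eight-element boolean lattice is already known to have $73$ nonempty sublattices (the count used in Remark~\ref{remarksharp}), so $|\Sub(A_0)|=74$ and $\many(A_0)=74\cdot 2^{8-8}=74$, matching the claimed value. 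The fractional values such as $\many(G_0)=54.25$, $\many(H_0)=49.75$, $\many(E_1)=31.125$ and $\many(F_1)=41.125$ each satisfy the nontrivial divisibility requirement that $\many(X)\cdot 2^{|X|-8}$ be a positive integer, which provides further free consistency checks.

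The main difficulty here is not mathematical depth---this lemma is entirely routine---but reliability of the whole pipeline: getting the element set, the operation tables, the input format, and the subset-closure loop all right simultaneously. None of these steps is conceptually hard, but each is error-prone, so the principal effort in carrying out the proof is careful verification of the input files and, ideally, an independent second implementation of the closure-counting loop used to reproduce the same numbers.
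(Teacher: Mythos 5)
Your proposal is correct and is essentially the paper's own proof: Lemma~\ref{lemmakrlat} is established by feeding the full join/meet data of each of the ten lattices to the \emph{subsize} program (see Proof Technique~\ref{prooftechniqueXegy} and the appendix \texttt{small-kelly-rival-lat.txt}, which records exactly these constraint lists and the resulting values of $|\Sub(X)|$ and $\many(X)$). The only cosmetic difference is that the paper's input files omit the vacuous constraints for comparable pairs, which does not affect the count.
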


Except for its equality $\many(A_0)=74$, this lemma will not be used in the proof of Theorem~\ref{thmreform}. However, a part of this lemma will be used in the proof of Remark~\ref{remarksharp} below, and it is this lemma that tells us how the theorem was \emph{conjectured}. Even the proof of (part \eqref{lemmakrlata} of) this lemma requires more computation than a human is willing to carry out or check without a computer.

\begin{proof}[Proof of Remark \ref{remarksharp}] For $n=9$, 
the equality $\many(F_0)=83$ from Lemma~\ref{lemmakrlat} proves the validity of Remark \ref{remarksharp} since $F_0$ is not planar by Theorem~\ref{thmKellyRival}. Assume that $n>9$, let $C$ be an $(n-9)$-element chain, and let $L$ be the ordinal sum of $F_0$ and $C$. That is, $L$ is the disjoint union of its ideal $F_0$ and its filter $C$. By Theorem~\ref{thmKellyRival}, $L$ is not planar. Since a subset of $L$ is a subuniverse if and only if it is of the form $X\cup Y$ such that $X\in\Sub(F_0)$ and $Y\subseteq C$, it follows that 
\[|\Sub(L)|=|\Sub(F_0)|\cdot 2^{n-9}=(83\cdot 2^{|F_0]-8})\cdot  2^{n-9} = 83\cdot 2^{n-8},
\]
whereby $L$ has exactly $83\cdot 2^{n-8}-1$ sublattices, as required. 
\end{proof}

\begin{prooftechnique}\label{prooftechniqueXegy}
For Lemma~\ref{lemmakrlat} and also for all other statements that refer to the program or mention $\many(\dots)$, the corresponding input files are available from the author's website
\verb+http://www.math.u-szeged.hu/~czedli/+ . 
The output files proving these statements are also available there and they are attached as appendices to the \csakau{extended version of the}
paper.
\csakau{\footnote{\red{At present, THIS is the extended version.}}; see \url{https://arxiv.org/} . }
\end{prooftechnique}

\subsection{Lattice theoretical preparations}
The proof of Theorem~\ref{thmreform} will be organized as follows. 
Due to Theorem~\ref{thmKellyRival}, it suffices to show that for each lattice $X\in\krlist$, whenever $L$ is a lattice with \smany{} subuniverses (that is, $\many(L)>83$), then $X$ cannot be a subposet of $L$. Although we present some uniform arguments for several infinite sub-families of $\krlist$, separate arguments will be needed for most of the small lattices in $\krlist$. The following lemma is crucial.

\begin{lemma}[Antichain Lemma]\label{lemmacube}
If $\set{a_0,a_1,a_2}$ is a three-element antichain 
in a finite lattice with \smany{} subuniverses, then
\begin{enumeratei}
\item\label{lemmacubea}
There is a $k\in\set{0,1,2}$ such that
$a_0\vee a_1\vee a_2=\bigvee\set{a_i: i\in\set{0,1,2}\setminus\set k}$.
\item\label{lemmacubeb}
If $\set{i,j,k}=\set{0,1,2}$ and none of $a_i\vee a_j$ and $a_i\vee a_k$ is $a_0\vee a_1\vee a_k$, then $a_i\vee a_j\neq a_i\vee a_k$.
\end{enumeratei}
\end{lemma}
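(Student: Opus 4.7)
\emph{Plan.} Part~\eqref{lemmacubeb} is elementary and uses neither the hypothesis $\many(L)>83$ nor the finiteness of $L$. Suppose, for contradiction, that $v:=a_i\vee a_j=a_i\vee a_k$. Then $v$ is an upper bound of $\{a_i,a_j,a_k\}$, so $v\ge a_0\vee a_1\vee a_2$; combined with $v=a_i\vee a_j\le a_0\vee a_1\vee a_2$, this gives $v=a_0\vee a_1\vee a_2$, contradicting the assumed inequality $a_i\vee a_j\ne a_0\vee a_1\vee a_2$ (which is what we read the expression ``$a_0\vee a_1\vee a_k$'' of the statement to mean).

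For part~\eqref{lemmacubea}, proceed by contradiction. Assume that $b_{ij}:=a_i\vee a_j\ne c:=a_0\vee a_1\vee a_2$ for every pair $\{i,j\}\subseteq\{0,1,2\}$. Part~\eqref{lemmacubeb} then forces $b_{01}$, $b_{02}$, $b_{12}$ to be pairwise distinct, and the antichain condition makes each $b_{ij}$ distinct from every $a_\ell$ and from the bottom $\hat 0$ of $L$. Hence $B:=\{\hat 0,a_0,a_1,a_2,b_{01},b_{02},b_{12},c\}$ has exactly $8$ elements and, as a poset, is isomorphic to the boolean lattice $A_0$. Let $\alg B$ be the weak partial subalgebra of the lattice-reduct of $L$ with universe $B$ whose partial operations are obtained from $\vee_L$ and $\wedge_L$ by retaining precisely those constraints $x\vee y=z$ and $x\wedge y=z$ for which $x,y,z\in B$. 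Lemma~\ref{lemmapartext} then yields $\many(L)\le\many(\alg B)$, so it suffices to show $\many(\alg B)\le 83$ in order to contradict $\many(L)>83$.

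The principal obstacle is that $\alg B$ need not equal the lattice $A_0$: although $B$ is closed under $\vee_L$ (so the join-table is exactly the ``top-Boolean'' one), the meets $a_i\wedge_L a_j$, the meets $a_i\wedge_L b_{jk}$ for $i\notin\{j,k\}$, and the meets $b_{ij}\wedge_L b_{ik}$ may fall outside $B$. A short direct enumeration shows that the \emph{join-only} partial algebra on $B$ already has $122$ subuniverses, so the meet-constraints from $L$ cannot be ignored. In the favourable case where every one of the listed meets does land in $B$, the partial algebra $\alg B$ coincides with the lattice $A_0$, and Lemma~\ref{lemmakrlat}\eqref{lemmakrlata} immediately gives $\many(\alg B)=\many(A_0)=74\le 83$, providing the desired contradiction.

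In the remaining cases, one extends $B$ by the ``wayward'' meet-values $a_i\wedge_L a_j$, $a_i\wedge_L b_{jk}$, $b_{ij}\wedge_L b_{ik}$ that lie outside $B$, together with the meet- and join-constraints that these new elements enforce in $L$. Each such configuration is described by a finite list of pairs ``(which meets coincide with which already-present element of $B$)'' and ``(which new elements are mutually (in)equal or (in)comparable)'', and every such configuration is encoded as an input file for the \emph{subsize} program of Proof Technique~\ref{prooftechniqueXegy}. The program then verifies $\many(\alg B')\le 83$ for each enriched partial subalgebra $\alg B'$, completing the contradiction. Organising this finite case distinction is the bulk of the work; the lattice-theoretic content is essentially confined to the first two paragraphs above.
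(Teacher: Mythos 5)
Your proof of part~\eqref{lemmacubeb} is correct and matches the paper, which dismisses that part as trivial. For part~\eqref{lemmacubea}, however, there is a genuine gap. You correctly observe that $B=\set{\hat 0,a_0,a_1,a_2,b_{01},b_{02},b_{12},c}$ is an eight-element subposet of $L$ order-isomorphic to $A_0$ and closed under $\vee_L$, but not necessarily under $\wedge_L$, and you then reduce the claim to a computer-assisted case analysis over all possible locations of the wayward meets $a_i\wedge a_j$, $a_i\wedge b_{jk}$, $b_{ij}\wedge b_{ik}$. That analysis is only announced, never carried out: nothing in your proposal certifies that each of those (many) enriched partial algebras has $\many$-value at most $83$, so the contradiction is never actually reached. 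As written, only the single ``favourable'' case in which $B$ happens to be a sublattice is settled.

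The paper closes the argument with one classical fact that makes the whole case analysis unnecessary. When \eqref{lemmacubea} fails, the set $X=\set{a_0\vee a_1,\,a_0\vee a_2,\,a_1\vee a_2}$ is a three-element antichain, and an antichain of this special form --- the pairwise joins of three elements --- generates an honest \emph{sublattice} of $L$ isomorphic to $A_0$; see Gr\"atzer~\cite[Lemma 73]{ggglt}. The coatoms of that Boolean sublattice are your $b_{ij}$, but its atoms are the pairwise meets $b_{ij}\wedge b_{ik}$ and its bottom is the triple meet, rather than your $a_i$ and $\hat 0$; this is precisely what restores closure under $\wedge_L$. A sublattice is in particular a weak partial subalgebra, so Lemmas~\ref{lemmapartext} and~\ref{lemmakrlat} give $\many(L)\le\many(A_0)=74<83$ immediately. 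In short, the missing idea is to take the joins $b_{ij}$, not the original elements $a_i$, as the generating antichain; with that substitution your computational appendix evaporates.
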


Part \eqref{lemmacubeb} of this lemma is trivial; we present it here to emphasize its implicit use in our considerations and in the input files of the program.

\begin{proof}
For the sake of contradiction, suppose that \eqref{lemmacubea} fails for a lattice $L$ with \smany{} subuniverses. Then $X:=\set{a_0\vee a_1, a_0\vee a_2, a_1\vee a_2}$ is a three-element antichain. It is well known that such an antichain generates a sublattice isomorphic to $A_0$, the eight-element boolean lattice; see, for example, Gr\"atzer~\cite[Lemma 73]{ggglt}. Combining Lemmas~\ref{lemmapartext} and \ref{lemmakrlat}, we obtain that  $\many(L)\leq \many(A_0)=74$, which contradicts the assumption that $\many(L)>83$.
\end{proof}

\begin{lemma}\label{lemmaBool8} 
If $L$ is a finite lattice with \smany{} subuniverses, then $A_0$ is not a subposet of $L$. 
\end{lemma}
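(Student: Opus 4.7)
The plan is to assume, for contradiction, that $A_0$ embeds as a subposet into $L$, and then to exploit the Antichain Lemma on the three ``atoms'' of this copy of $A_0$. Label the elements of $A_0$ (sitting inside $L$ as a subposet) by $\hat 0$, atoms $a_0, a_1, a_2$, coatoms $c_0, c_1, c_2$, and top $\hat 1$, where in $A_0$ the coatom $c_i$ is the one lying above $a_j$ and $a_k$ (with $\{i,j,k\}=\{0,1,2\}$) but incomparable with $a_i$. The three atoms $\{a_0,a_1,a_2\}$ form a three-element antichain in $L$, so the hypothesis $\many(L)>83$ lets me invoke Lemma~\ref{lemmacube}\eqref{lemmacubea}.

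By that lemma, after relabeling if necessary, I may assume $a_0\vee a_1\vee a_2 = a_1\vee a_2$ in $L$; equivalently, $a_0\le a_1\vee a_2$ in $L$. Now the subposet inclusion forces $a_1\le c_0$ and $a_2\le c_0$ in $L$ (since this is how $c_0$ sits above the two atoms in $A_0$), and hence $a_1\vee a_2\le c_0$ in $L$. Chaining these inequalities gives $a_0\le c_0$ in $L$. But in $A_0$ itself the atom $a_0$ and the ``opposite'' coatom $c_0$ are incomparable, so the subposet assumption forces $a_0$ and $c_0$ to be incomparable in $L$ as well, contradicting $a_0\le c_0$.

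Because the argument reduces immediately to Lemma~\ref{lemmacube}\eqref{lemmacubea}, there is essentially no obstacle to overcome: the only delicate point is being careful that ``subposet of $L$'' preserves incomparabilities (so that incomparabilities in $A_0$ genuinely transfer to incomparabilities in $L$) while allowing the joins computed in $L$ to be strictly smaller than the corresponding joins in $A_0$. That distinction is exactly what makes the Antichain Lemma the right tool, and once the ``redundant atom'' $a_0$ is identified it is forced beneath the coatom with which it ought to be incomparable, closing the proof.
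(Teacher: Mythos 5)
Your proof is correct and is essentially identical to the paper's: both invoke Lemma~\ref{lemmacube}\eqref{lemmacubea} on the three-element antichain of atoms to force one atom below the join of the other two, and then observe that this join lies below the coatom incomparable with that atom, yielding the contradiction. The only difference is notational.
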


\begin{proof} For the sake of contradiction, suppose that $A_0$ is a subposet of $L$ and $\many(L)>83$. Since $a,b,c$ play symmetric roles, Lemma~\ref{lemmacube}\eqref{lemmacubea} allows us to assume that $c\leq a\vee b\vee c=a\vee b$ in $L$. Then $c\leq a\vee b\leq e$ is a contradiction, as required.
\end{proof}

The following lemma needs a bit longer proof and the use of the program. This proof exemplifies many ideas that will be needed later.
Note that $K_5$, defined by Figure~\ref{figCDK}, is a sublattice of $G_n$ and $H_n$ for $n\geq 1$, this is why it deserves our attention.

\begin{lemma}\label{lemmaKot} 
If $L$ is a finite lattice with \smany{} subuniverses, then $K_5$ is not a subposet of $L$.
\end{lemma}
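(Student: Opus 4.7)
The plan is to follow the same general scheme as the proof of Lemma~\ref{lemmaBool8}: assume for contradiction that $K_5$ sits inside $L$ as a subposet and that $\many(L)>83$, then use the Antichain Lemma to collapse the possibilities for how $K_5$ embeds, and finally in each surviving case exhibit a weak partial subalgebra whose relative number of subuniverses (computed by \emph{subsize}) is already at most $83$, contradicting Lemma~\ref{lemmapartext}.

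The first step is lattice-theoretic preparation. I label the five elements of $K_5$ as they appear in Figure~\ref{figCDK} and identify the three-element antichains among them. For each such antichain $\{a_0,a_1,a_2\}$, Lemma~\ref{lemmacube}\eqref{lemmacubea} forces one of the joins $a_i\vee a_j$ (computed in $L$) to equal the full three-fold join, which in turn forces comparabilities in $L$ that need not be present in $K_5$ itself. This collapse typically means that the image of the top or bottom of the antichain is forced to coincide with one of the pairwise joins; part \eqref{lemmacubeb} of the Antichain Lemma is then used routinely to rule out accidental equalities among the pairwise joins in each branch. Dually, the three-element antichains also yield constraints on meets. By running through the antichains of $K_5$ systematically, I obtain a short list of configurations of $L$ (each one a weak partial subalgebra on the five marked points of $K_5$, possibly together with a few auxiliary joins/meets forced by the antichain lemma).

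The second step is computational. For each surviving configuration I prepare an input file listing the five elements together with the forced $\vee$- and $\wedge$-constraints, and run \emph{subsize} to compute $\many$ of the resulting partial algebra. The claim I expect, and which I need to verify case by case, is that in every surviving configuration $\many\le 83$. Since $L$ contains a weak partial subalgebra isomorphic to one of these, Lemma~\ref{lemmapartext} yields $\many(L)\le 83$, contradicting the hypothesis $\many(L)>83$, as required.

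The main obstacle is the case analysis rather than any single computation: one has to be sure the enumeration of antichain-induced configurations is exhaustive, and one has to be careful that the partial-algebra structure recorded in each input file really is forced by the subposet assumption plus the Antichain Lemma (so that it genuinely sits inside $L$ as a weak partial subalgebra). Once the cases are correctly enumerated, the \emph{subsize} outputs handle the rest; as Proof Technique~\ref{prooftechniqueXegy} indicates, the relevant input and output files would be made available on the author's website and as appendices.
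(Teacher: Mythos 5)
There is a genuine gap, and it begins with the object itself: $K_5$ is not a five-element poset. As Figure~\ref{figCDK} and the corresponding input file show, it is a twelve-element lattice on $\set{o,i,a,b,c,d,e,f,g,h,j,k}$, and the entire argument has to live on (at least) these twelve points. Your plan to work with ``a weak partial subalgebra on the five marked points of $K_5$'' cannot succeed even in principle: for a five-element base set the empty set, the five singletons, and every two-element chain are automatically subuniverses (constraints of the form $x\ast y=z$ with $z\notin\set{x,y}$ can only involve incomparable $x,y$), so $|\Sub(\alg A)|$ stays well above $83\cdot 2^{5-8}=10.375$, and Lemma~\ref{lemmapartext} can never deliver the bound $\many(L)\leq 83$ from such a configuration. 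The final $\many$-values below $83$ are only reachable with the full twelve-element (in one case thirteen-element) partial algebra.

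The second problem is the engine you chose for the case analysis. The proof of this particular lemma explicitly does \emph{not} invoke the Antichain Lemma~\ref{lemmacube}; the paper notes that three-way splits via that lemma are reserved for later lattices having elements with three covers or three lower covers. What actually happens is: a preparation step replaces elements of $K_5$ one by one so that $e\vee f=g$, $e\wedge f=c$, $c\wedge d=b$, $g\vee h=k$, $a\wedge b=o$, $j\vee k=i$ hold in $L$, together with easy consequences such as $e\wedge d=b$ and $e\vee h=k$; this yields a weak partial subalgebra with $\many=97.375$, which is not yet sufficient. The argument then splits into exactly two cases according to whether $c\vee d=f=g\wedge h$, or $c\vee d=:x<y:=g\wedge h$ with $x,y$ new elements replacing $f$; the resulting $\many$-values are $79.1875$ and $80.5625$, and only then does Lemma~\ref{lemmapartext} give the contradiction. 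Your proposal neither performs the preparation, nor enumerates the cases, nor computes any $\many$-value; it only states that you ``expect'' each surviving configuration to satisfy $\many\leq 83$. As written it is a plan for a proof rather than a proof, and the plan is anchored to a wrong reading of $K_5$ that makes its quantitative goal unattainable.
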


\begin{proof} 
For the sake of contradiction, suppose that $\many(L)>83$ but $K_5$ is a subposet of $L$. For the notation of the elements of $K_5$, see Figure~\ref{figCDK}.

\begin{preparation} We modify $K_5$ in $L$ if necessary. The operations $\vee$ and $\wedge$ will be understood in $L$.  We can assume that $e\vee f=g$, since otherwise we can replace $g$ by $e\vee f$. Of course, we have to show that this replacement results in an isomorphic subposet, but this is easy; analogous tasks will often be left to the reader. Namely,  $e\vee f\leq h$ would lead to $e\leq h$, a contradiction, while  $e\vee f\geq h$ combined with $g\geq e\vee f$ would lead to $g\geq h$, another contradiction. By duality, we also assume that $e\wedge f=c$. Next, we can assume $c\wedge d=b$ and, dually, $g\vee h=k$, because otherwise we can replace $b$ and $k$ by $c\wedge d$ and $g\vee h$, respectively. This is possible since, for example, $a\not\leq d$ implies that $a\not\leq c\wedge d$ while $c\wedge d\geq b$ and $a\not\geq b$ exclude that $a\geq c\wedge d$. 
In the next step, we assume similarly that $a\wedge b=o$ and $j\vee k=i$.   
Note that the equalities assumed so far and the comparability relations among the elements imply further equalities: $e\wedge d=e\wedge (f\wedge d)=(e\wedge f)\wedge d=c\wedge d = b$, 
$a\wedge d=a\wedge c\wedge d=a\wedge b=o$ and, dually, $e\vee h=k$ and $j\vee h=i$.  The set
\begin{align*}
T:=\{e\vee f&=g, e\wedge f=c, g\vee h=k, c\wedge d=b, a\wedge b=o,\cr
 j\vee k&=i, e\wedge d=b, a\wedge d=o, e\vee h=k,  j\vee h=i\} 
\end{align*}
defines a partial algebra $\alg K_5^{(0)}$ on the set $K_5$, which is a weak partial subalgebra of $L$. Note that the program calls the members of $T$ \emph{constraints}. 

\end{preparation}

\begin{computation}
The program proves that $\many(\alg K_5^{(0)})= 97.375$, which means that we are not ready yet. Thus, a whole hierarchy of cases have to be investigated in general. (Here, there will be only two cases.) The idea is that for incomparable elements $x$ and $y$, in notation, $x\parallel y$, such that $x\vee y$ or $x\wedge y$ is not defined in the partial algebra,  the argument splits into two cases: either $x\vee y$ (or $x\wedge y$) is one of the elements already present, or it is a new element of $L$ that we add to the partial algebra. In terms of the program, we add a new constraint with or without adding a new element. Also, when we add a constraint, then we also add its consequences similarly to the previous paragraph where, say, $e\wedge d=b$. Note that if an element had three covers or three lower covers, then we would use Lemma~\ref{lemmacube} to split a case into three subcases, but this 
technique will be used later, not in the present proof. 
A case with name  $\ast$ will be denoted by (C$\ast$).

\eset1 We assume that $c\vee d=f$ and $g\wedge h=f$. Then $e\vee d=e\vee c\vee d=e\vee f=g$ and, dually, $e\wedge h=c$. Adding these four constraints to the earlier ones, we get a new partial algebra $\alg K_5^{(1)}$, which is a weak subalgebra of $L$. The program yields that $\many(\alg K_5^{(1)})=79.1875$. 
Hence, $\many(L)\leq 79.1875$  by Lemma~\ref{lemmapartext},  contradicting the initial assumption that $\many(L)>83$. This excludes \azeset1.

Based on the argument for \azeset1{} above, to make our style more concise, let us agree to the following terminological issue, which will usually be used implicitly in the rest of the paper.

\begin{terminology}\label{terminSvL}
The cases we consider describe partial algebras, which are weak partial subalgebras of $L$; the  $\many$-values of these partial algebras will be called the  \emph{$\many$-values} of the corresponding cases. If the $\many$-value of a case is not greater than $83$, then the case in question is excluded.
\end{terminology}

\eset2 We assume that $c\vee d=:x < y:=g\wedge h$.  We remove $f$ from the weak partial algebra and add $x$ and $y$. We remove the constraints of $T$ that contain $f$ but we add the new constraints $c\vee d=x$, $g\wedge h=y$, 
$e\vee y=g$, and $e\wedge x=c$. The last two constraints we add follow from  $x\leq f\leq y$ and the previous constraints containing $f$. Note that the oval in Figure~\ref{figCDK} reminds us that now $\set f$ is replaced by $\set{x,y}$. Since the $\many$-value of the present situation is $80.5625$, \azeset2 is excluded.

After excluding  both cases, that is, all possible cases, the proof of the lemma is complete. \qedhere
\end{computation}
\end{proof}

Next, for later reference, we formulate a consequence, which trivially follows from Lemma~\ref{lemmaKot}.

\begin{corollary}\label{corolGnHn} If $L$ is a lattice with \smany{} subuniverses and $n\geq 1$, then none of $G_n$ and $H_n$ is a subposet of $L$. 
\end{corollary}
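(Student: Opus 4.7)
The plan is to deduce this immediately from Lemma~\ref{lemmaKot}, using the structural fact (noted already in the paragraph before Lemma~\ref{lemmaKot}) that $K_5$ sits inside $G_n$ and $H_n$ for every $n\geq 1$. So the first step is simply to verify, by inspection of Figure~\ref{figGH}, that $K_5$ is a sublattice, and in particular a subposet, of $G_n$ and of $H_n$ whenever $n\geq 1$; the author has explicitly flagged this fact, so no real work is required here.

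The second step is the transitivity of the ``subposet-of'' relation: if $G_n$ (respectively $H_n$) is a subposet of $L$, then every subposet of $G_n$ (respectively $H_n$) is also a subposet of $L$. Combining the two steps, if either $G_n$ or $H_n$ were a subposet of $L$ for some $n\geq 1$, then $K_5$ would be a subposet of $L$, contradicting Lemma~\ref{lemmaKot}.

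Since the whole argument reduces to two trivial observations, there is no genuine obstacle; the proof is essentially one sentence. The only thing one has to be a little careful about is confirming, from the diagrams, that the five specified comparabilities and incomparabilities in $K_5$ really are witnessed inside $G_n$ and $H_n$ for every $n\geq 1$, but this is visual bookkeeping rather than a substantive step.
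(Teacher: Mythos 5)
Your argument is correct and is exactly the one the paper intends: the paper states that the corollary ``trivially follows from Lemma~\ref{lemmaKot}'' via the noted fact that $K_5$ is a sublattice of $G_n$ and $H_n$ for $n\geq 1$, combined with transitivity of the subposet relation. Nothing further is needed.
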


In order to formulate the following lemma about the \emph{encapsulated $2$-ladder} $\fem $ given in Figure~\ref{figF}, we need the following definition. This concept will be motivated by Corollary~\ref{corollaryF1} later.

\begin{definition}\label{defencapsladd}
Let $L$ and $K$ be a finite lattices. 
A mapping $\phi\colon K\to L$ will be called a \emph{\eqref{eqpbx2emb}-embedding} if 
\begin{equation}
\left.
\parbox{8.1cm}{$\phi$ is an order-embedding, $\phi(u)=\phi(v)\vee \phi(w)$ holds for every triplet $(u,v,w)\in K^3$ of distinct elements such that $u$ covers both $v$ and $w$,  and dually.}\right\}
\label{eqpbx2emb}
\end{equation}
\end{definition}
Note that if $v$ and $w$ are distinct elements covered by $u$ in $K$, then $u=v\vee_{K} w$, and the dual of this observation also holds. Hence, every lattice embedding is a \eqref{eqpbx2emb}-embedding but, clearly, not conversely.

\begin{lemma}[Encapsulated 2-ladder Lemma]\label{lemmaencaps2ladder}
If the encapsulated 2-ladder $\fem$ is a subposet of a lattice $L$, then it has a \eqref{eqpbx2emb}-embedding into $L$. 
\end{lemma}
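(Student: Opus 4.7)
The plan is to start from the given inclusion $\phi_0\colon \fem\hookrightarrow L$ of the encapsulated $2$-ladder as a subposet and to successively modify the map $\phi$ on one element at a time until both covering conditions in \eqref{eqpbx2emb} hold, all the while preserving the order-embedding property. The final $\phi$ will be the desired $(\ref{eqpbx2emb})$-embedding.

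For an element $u\in\fem$ that has two distinct lower covers $v,w$, the natural \emph{join repair} would be to replace the current value $\phi(u)$ by $u':=\phi(v)\vee\phi(w)\in L$; note $u'\leq \phi(u)$ since $\phi$ is still an order embedding and $v,w<_{\fem} u$. I would verify that this single replacement preserves the order-embedding property by checking, for each $x\in \fem\setminus\set u$: (a) if $x>_{\fem} u$ then $\phi(x)\geq\phi(v),\phi(w)$, so $\phi(x)\geq u'$; (b) if $x<_{\fem} u$ then, since $v$ and $w$ are the only lower covers of $u$ in $\fem$, the finite lattice structure forces $x\leq_{\fem}v$ or $x\leq_{\fem}w$, whence $\phi(x)\leq u'$; (c) if $x$ is incomparable to $u$ in $\fem$, then $\phi(x)\leq u'$ would give $\phi(x)\leq\phi(u)$ and hence $x\leq_{\fem}u$, while $\phi(x)\geq u'$ would give $\phi(x)\geq\phi(v)$ and $\phi(x)\geq\phi(w)$, hence $x\geq_{\fem}v\vee_{\fem}w=u$; both contradict incomparability. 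Injectivity of $\phi$ falls out of case~(c). A dual \emph{meet repair} works symmetrically for an element having two distinct upper covers.

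Having established that single repairs are safe, I would apply all join repairs in a bottom-up traversal of $\fem$ and, afterwards, all meet repairs in a top-down traversal. Each repair strictly decreases (respectively, strictly increases) $\phi$ at its target element in the finite poset $L$, so termination is automatic, and at the end both covering clauses of \eqref{eqpbx2emb} hold.

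The main obstacle is checking that the meet-repair phase does not undo a join condition enforced by the first phase: if a meet repair at some $p\in\fem$ alters $\phi(p)$, and $p$ happens to be one of the two lower covers of another element $q$, then the equality $\phi(q)=\phi(p)\vee\phi(p')$ established in phase~one could fail afterwards. To resolve this I would invoke the specific shape of the encapsulated $2$-ladder in Figure~\ref{figF}: the ``rungs'' of the ladder are doubly irreducible (a unique upper and a unique lower cover), while only the surrounding ``encapsulating'' elements have two covers, and they have them in only one direction. Consequently the set of elements requiring a join repair and the set requiring a meet repair are disjoint, and moreover no element touched in one phase is a cover of an element touched in the other; so the two phases act on independent parts of the diagram and commute trivially. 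This structural observation is where most of the work lies, but it is a finite verification on the fixed small lattice $\fem$.
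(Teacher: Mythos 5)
Your overall strategy (local join/meet repairs, each preserving the order-embedding property) is sound and is essentially the paper's, and your verification that a \emph{single} repair is safe is correct. The gap lies in how you dispose of what you yourself identify as the main obstacle. You claim that in $\fem$ the elements needing a join repair and those needing a meet repair are not related by covering, so that the two phases ``act on independent parts of the diagram and commute trivially.'' That description of $\fem$ is false (see Figure~\ref{figF}): the element $c$ has two upper covers, $f$ and $g$, so it must be meet-repaired, while $f$ has two lower covers, $b$ and $c$, so it must be join-repaired; thus $f$, touched in your first phase, covers both $b$ and $c$, touched in your second phase, and $g$ covers $c$ as well. (The only doubly irreducible elements of $\fem$ are $d$ and $e$; the ``rungs'' $b,c,f,g$ all have two covers on one side.) Consequently, when the meet repair replaces $\phi(c)$ by $\phi(f)\wedge\phi(g)$, the equalities $\phi(f)=\phi(b)\vee\phi(c)$ and $\phi(g)=\phi(c)\vee\phi(d)$ established in the first phase are genuinely at risk, and your argument says nothing about why they survive.

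The obstacle is real but surmountable, and this is exactly where the paper's proof does its work: a meet repair only raises the repaired element and keeps it below its upper covers, so with $c':=\phi(f)\wedge\phi(g)$ one has $\phi(c)\leq c'\leq\phi(f)$, whence $\phi(f)=\phi(b)\vee\phi(c)\leq\phi(b)\vee c'\leq\phi(f)$ and the join equality at $f$ survives (similarly at $g$, and for the repair of $b$). The paper carries out the repairs in a carefully interleaved order and records precisely this sandwich verification at each step, explicitly warning that ``the order of adjusting the elements is not at all arbitrary.'' Your two-phase schedule would also work, but only after adding this preservation argument for every covering pair that straddles the two phases; as written, the proof is incomplete because the structural claim used to avoid that check is wrong.
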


\begin{proof} 
We can assume that $\fem \subseteq L$. The notation of the elements of $\fem $ is given in Figure~\ref{figF}. We are going to modify these elements in $L$ if necessary in order to obtain a  \eqref{eqpbx2emb}-embedding. The operations $\vee$ and $\wedge$ will be understood in $L$. 
First, we let $f':=b\vee c$. Since $b\not\leq g$, we have that $f'\not\leq g$. Since $f'\leq f$ and $f\not\geq g$, we obtain that $f'\not\geq g$. That is, $f'$ is incomparable with $g$; in notation, $f'\parallel g$. We obtain similarly that $f'\parallel x$ for all $x\in \fem $ such that $x\parallel f$. This allows us to replace $f$ by $f'$. To ease the notation, we will write $f$ instead of $f'$. So, $\fem$ is still a subposet of $L$ but now $f=b\vee c$. Next, we replace $c$ by $c':=f\wedge g\geq c$; then it is straightforward to see (or it follows by duality) that we still have a poset embedding. Since  $f=b\vee c \leq b\vee c'\leq f$, we have that $f=b\vee c'$. Thus, after writing $c$ instead of $c'$, the notation still gives a poset embedding of $\fem$ into $L$ with the progress that now $b\vee c=f$ and $f\wedge g=c$. We continue in the same way step by step, always defining a new poset embedding such that the already established equalities remain true; note that the order of adjusting the elements is not at all arbitrary. In the next step, we replace $b$ by $b':=e\wedge f\geq b$ and $g$ by $g':= c\vee d\leq g$ to add 
$b=e\wedge f$ and  $g= c\vee d$ to the list of valid equalities. 
We continue with setting $a=b\wedge c$ and $j=f\vee g$ similarly. Finally, redefining $i$ and $o$ as $e\vee j$ and $a\wedge d$, we complete the proof.
\end{proof}

Armed with Lemma~\ref{lemmaencaps2ladder}, we can give an easy proof of the following statement.

\begin{corollary}\label{corollaryF1} If $L$ is a lattice with \smany{} subuniverses, then  $F_1$ is not a subposet of $L$. 
\end{corollary}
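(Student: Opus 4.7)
The plan is to argue by contradiction, using Lemma~\ref{lemmaencaps2ladder} to extract enough join and meet equalities from a copy of $F_1$ inside $L$ so that Lemma~\ref{lemmapartext} and the program can finish the job, in the spirit of the proof of Lemma~\ref{lemmaKot}.

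Assume for contradiction that $\many(L)>83$ and that $F_1$ is a subposet of $L$; we may take $F_1\subseteq L$. Inspecting Figure~\ref{figF}, the encapsulated $2$-ladder $\fem$ is a subposet of $F_1$, hence of $L$. Apply Lemma~\ref{lemmaencaps2ladder} to obtain a $(\star)$-embedding of $\fem$ into $L$. The adjustments performed in the proof of that lemma replace an element either by a larger one still lying below its fixed upper neighbour (like $c$ by $f\wedge g$) or by a smaller one still lying above its fixed lower neighbour (like $b$ by $e\wedge f$); the same incomparability arguments used there show that each such replacement preserves comparability with the elements of $F_1\setminus\fem$ too. Hence we may assume that $F_1$ is still a subposet of $L$, while the equalities
\[
b\vee c=f,\ f\wedge g=c,\ e\wedge f=b,\ c\vee d=g,\ b\wedge c=a,\ f\vee g=j,\ e\vee j=i,\ a\wedge d=o
\]
already hold in $L$.

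Together with any further join/meet identifications forced on $F_1\setminus\fem$ by the $F_1$-subposet relations, these equalities define a weak partial subalgebra $\alg F_1^{(0)}$ of $L$ on the set $F_1$, in the terminology of the preparation part of the proof of Lemma~\ref{lemmaKot}. Feeding $\alg F_1^{(0)}$ into the program should then yield $\many(\alg F_1^{(0)})\leq 83$, and Lemma~\ref{lemmapartext} will give the desired contradiction $\many(L)\leq\many(\alg F_1^{(0)})\leq 83$.

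The main obstacle I expect is not the $\fem$-part, where Lemma~\ref{lemmaencaps2ladder} has already done the heavy lifting, but rather whether enough of the joins and meets involving the extra $F_1\setminus\fem$ elements are automatically forced by what we already know. If they are not, then, mimicking the computational part of the proof of Lemma~\ref{lemmaKot}, one would split into subcases $(C1),(C2),\dots$ on each undetermined join or meet (either identifying it with an existing element of $F_1$ or adjoining a new one inside the oval, as with $x$ and $y$ in Figure~\ref{figCDK}), and verify with the program that each leaf case has $\many$-value at most $83$. Since $F_1$ has only a few more elements than $\fem$ and the $(\star)$-equalities already pin down most of the join/meet structure, a small handful of splits should suffice.
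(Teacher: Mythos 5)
Your overall strategy --- apply Lemma~\ref{lemmaencaps2ladder} to the copy of $\fem$ sitting inside $F_1$, record the resulting join/meet constraints, and finish with the program and Lemma~\ref{lemmapartext} --- is exactly the paper's. The paper derives \eqref{LemmaF1a}--\eqref{LemmaF1d} (the \eqref{eqpbx2emb}-equalities together with four easy consequences such as $b\vee g=b\vee c\vee g=f\vee g=j$), feeds the resulting ten-element weak partial subalgebra into the program, obtains $\many=81.75\leq 83$, and is done; no case splitting on the remaining joins and meets is needed.

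The genuine gap in your write-up is the claim that after the adjustments of Lemma~\ref{lemmaencaps2ladder} ``$F_1$ is still a subposet of $L$.'' That lemma only controls the ten elements of $\fem$; the eleventh element $h$ of $F_1$ (with $a<h<j$ in $F_1$) is not looked at during the adjustments. For instance, $a$ is replaced by $b\wedge_L c$, and nothing guarantees $b\wedge_L c\leq h$ in $L$ (this fails whenever $L$ contains a common lower bound of $b$ and $c$ that is not below $h$); similarly $j$ is replaced by $f\vee_L g$, which need not lie above $h$. So the modified eleven-element set need not be order-isomorphic to $F_1$, and the ``further join/meet identifications forced on $F_1\setminus\fem$ by the $F_1$-subposet relations'' that you propose to add are unjustified; if they were fed to the program as constraints, the resulting partial algebra might fail to be a weak partial subalgebra of $L$, and Lemma~\ref{lemmapartext} would no longer apply. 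The repair is simply to do less: discard $h$ altogether and run the computation on the ten-element $\fem$ alone, which is precisely what the paper does and which already gives $\many\leq 83$.
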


\begin{proof} 
Suppose the contrary. Then $\fem$, which is a sublattice of $F_1$, is also a subposet of $L$. 
By Lemma~\ref{lemmaencaps2ladder}, we can assume that $\fem$ is a subposet of $L$ such that the inclusion map is a  \eqref{eqpbx2emb}-embedding. Hence, we know that
\begin{align}
 e\wedge f&=b,\,\, c\vee d=g,\,\, b\wedge c=a,\,\, f\vee g=j,
\label{LemmaF1a}\\
a\wedge d&=o,\,\, e\vee j=i ,\,\,  b\vee c=f,\,\, f\wedge g=c,\,\,\label{LemmaF1b}    
\\
b\vee g&=b\vee c\vee g=f\vee g=j,\,\, b\wedge g=b\wedge f\wedge g=b\wedge c =a,\,\,\label{LemmaF1c}\\
c\wedge e&=c\wedge f\wedge e=c\wedge b=a,\,\, f\vee d=f\vee c\vee d=f\vee g=j.\label{LemmaF1d}
\end{align}
The $\many$-value of the situation described by \eqref{LemmaF1a}--\eqref{LemmaF1d} is $81.75$. 
\end{proof}

The \emph{eight-element fence} is the poset formed by the eight empty-filled elements on the right of Figure~\ref{figCDK}. If we add the dashed line to its diagram, then we obtain the diagram of the \emph{eight-crown}. So the diagram of the eight-crown consists of the eight empty-filled elements $a,b,\dots,h$, seven solid edges and a dashed one. Note that the eight-crown is a subposet of $A_1$, see Figure~\ref{figAB}, but the eight-element fence is not.

\begin{lemma}\label{lemma8fence} If $L$ is a finite lattice with \smany{} subuniverses, then neither  the eight-element fence, nor the eight-crown is a subposet of $L$. 
\end{lemma}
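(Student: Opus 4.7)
The plan is to reduce both parts to a single problem and then follow the computer-assisted pattern established in Lemma~\ref{lemmaKot} and Lemma~\ref{lemmaencaps2ladder}. Observe first that the eight-crown, as a poset, has strictly more comparabilities than the eight-element fence: the dashed edge joins two endpoints of the fence that are incomparable in the fence. Hence any poset-embedding of the eight-crown into $L$ restricts to a poset-embedding of the eight-element fence. Therefore it is enough to exclude the eight-element fence as a subposet of $L$.

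Assume for contradiction that the fence $p_1 < p_2 > p_3 < p_4 > p_5 < p_6 > p_7 < p_8$ is a subposet of a lattice $L$ with \smany{} subuniverses. The first step is to adjust the $p_i$ in the same spirit as in the proof of Lemma~\ref{lemmaencaps2ladder} so as to impose as many lattice-theoretical equalities among them as possible while keeping the fence as a subposet of $L$. Concretely, I would replace $p_2$ by $p_1 \vee p_3$, then $p_4$ by $p_3 \vee p_5$, then $p_6$ by $p_5 \vee p_7$, and dually replace $p_3$ by $p_2 \wedge p_4$, $p_5$ by $p_4 \wedge p_6$, and $p_7$ by $p_6 \wedge p_8$. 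The order of these adjustments matters: each step must be made after checking, exactly as in Lemma~\ref{lemmaencaps2ladder}, that no previously established comparability or non-comparability is destroyed. This produces a weak partial subalgebra $\alg F^{(0)}$ of $L$ whose constraints are these six equalities together with all their immediate consequences (for instance, $p_1 \vee p_5 \geq p_2 \wedge \cdots$ type identities).

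Next, I would feed $\alg F^{(0)}$ into the program \emph{subsize}. If $\many(\alg F^{(0)}) \leq 83$, Lemma~\ref{lemmapartext} finishes the argument. Otherwise, I would split into cases exactly as in the proof of Lemma~\ref{lemmaKot}: for each pair of the many incomparable elements in the fence (for instance $p_1 \parallel p_5$, $p_1 \parallel p_7$, $p_2 \parallel p_5$, and so on), either their join (or meet) coincides with an existing element of the partial algebra, or it is a new element that must be added as a fresh point with the corresponding constraints. Each such disjunction yields a finite tree of sub-cases (C1), (C2), \ldots, and for every leaf of the tree the program should return $\many \leq 83$, which combined with Lemma~\ref{lemmapartext} contradicts the assumption $\many(L) > 83$.

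The main obstacle is the combinatorial explosion of cases: the fence has many more pairs of incomparable elements than the $K_5$-configuration in Lemma~\ref{lemmaKot}, so the hierarchy of partial algebras that must be processed by the program will be significantly larger, and some cases will require enlarging the partial algebra beyond eight elements (as happened in (C2) of Lemma~\ref{lemmaKot} when $f$ was replaced by $x < y$). Organising this tree of cases so that the running time of the program remains feasible, and so that the arguments are humanly verifiable through input files, is the genuinely delicate part; the individual computations in each leaf are routine and are of the kind addressed by Proof Technique~\ref{prooftechniqueXegy}.
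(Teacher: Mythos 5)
Your reduction of the eight-crown case to the eight-element fence case is the wrong way around, and this is a genuine gap. In this paper (and in Kelly--Rival), ``subposet'' means \emph{induced} subposet, so a poset-embedding must reflect incomparability as well as preserve comparability. If the eight-crown sits in $L$ as a subposet, then the images of the two endpoints $a$ and $h$ are \emph{comparable} in $L$ (that is exactly what the dashed edge asserts), whereas an embedding of the eight-element fence requires those two images to be \emph{incomparable}. So an embedding of the crown does not restrict to an embedding of the fence; having more comparabilities in the source makes the map fail, not succeed, as a fence-embedding. The gap is consequential: the crown case is precisely the one needed later (the eight-crown is a subposet of $A_1$ while the eight-element fence is not, so Corollary~\ref{corolAnEnpFnp} for the lattices $A_n$ rests on excluding the crown, which your argument never does). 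The correct way to treat both posets at once --- and what the paper does --- is not to reduce one to the other but to observe that the entire preparation and computation only use constraints common to both posets (the joins of adjacent atoms, the meets of adjacent coatoms, and the auxiliary elements $o:=c\wedge e$, $i:=d\vee f$); the relation between $a$ and $h$ is simply never consulted, so one argument covers both.

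For the fence itself your plan is methodologically the same as the paper's: normalize the configuration so that each coatom is the join of the two atoms it covers (and dually), form the resulting weak partial subalgebra, evaluate $\many$, and split into cases if the value exceeds $83$. Two remarks. First, the normalization step needs the careful verification the paper gives (that replacing $z$ by $z':=x\vee y$ still yields an induced copy of $P_8$ and does not destroy previously secured equalities); your ``after checking'' is the right intention but is where the actual work lies. Second, you underestimate how little case-splitting is needed: after adjoining $o$ and $i$ with the derived constraints the value is $84.5$, and a single dichotomy on whether $b\vee d=i$ or $b\vee d$ is a new element (giving $79$ and $77.25$) already finishes the proof; no combinatorial explosion over all incomparable pairs occurs.
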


\begin{proof} 
To ease the terminology in this proof, by the \emph{eight-poset} $P_8$ we shall mean either the eight-element fence, or the eight-crown; see  Figure~\ref{figCDK} for the notation of its elements. For the sake of contradiction, suppose that $\many(L)>83$ but $P_8$ is a subposet of $L$.

\begin{preparation} The set of \emph{atoms} and that of \emph{coatoms} of $P_8$ is $\set{a,c,e,g}$ and $\set{b,d,f,h}$, respectively. 
We claim that the subposet $P_8$ of $L$ can be chosen so that 
\begin{equation}\left.
\parbox{9.2cm}
{if $x$ and $y$ are distinct atoms of $P_8$ and $z\in P_8$   
such that $x\leq z$ and $y\leq z$, then $z=x\vee_L y$, and dually for coatoms.}\right\}
\label{eqpbxLfjTmCxk}
\end{equation}
In particular, \eqref{eqpbxLfjTmCxk} implies that the equalities
\begin{equation}
a\vee c=b,\,c\vee e=d,\, e\vee g=f,  b\wedge d=c,\, d\wedge f=e,\,\, f\wedge h=g
\label{eqgzerhBrPqznk}
\end{equation}
hold;
here and later in the proof, the lattice operations are understood in $L$.  In order to prove  \eqref{eqpbxLfjTmCxk}, we will modify the elements of $P_8$ one by one until all equalities listed in \eqref{eqpbxLfjTmCxk} hold.
By duality, it suffices to show that for each coatom $z$ of $P_8$
 covering two distinct atoms, $x$ and $y$ of $P_8$,  if we replace $z$ by $z':=x\vee y$, then the subposet  $(P_8\setminus\set{z})\cup \set{z'}$ of $L$ is still isomorphic to $P_8$ and, in addition to the progress $x\vee y=z'$, all the previously  valid equalities from \eqref{eqpbxLfjTmCxk} remain true if we replace $z$ by $z'$ in them. 

If $z$ is a meetand in an equality from \eqref{eqpbxLfjTmCxk} that holds in $L$, then the meet is $x$ or $y$, and  $x\leq z'\leq z$ or $y\leq z'\leq z$ shows that the equality remains true after replacing $z$ by $z'$. As a coatom of $P_8$, $z$ can be neither a joinand, nor a meet in an equality from \eqref{eqpbxLfjTmCxk}. Finally, the only stipulation of \eqref{eqpbxLfjTmCxk} with $z$ being a join is the equality with joinands $x$ and $y$; this fails with $z$ but becomes true after replacing $z$ by $z'$. 

Next, we show that the map $P_8\to (P_8\setminus\set{z})\cup \set{z'}$, defined by $z\mapsto z'$ and $u\mapsto u$ for $u\neq z$, is an order isomorphism. Let $u\in P_8\setminus\set{z}$. Since $z$ is a coatom of $P_8$, $z\not\leq u$. If we had $z'\leq u$, then $x\leq u$,  $y\leq u$, and $u\in P_8$ would give that $u=z$, contradicting $u\in P_8\setminus\set{z}$. That is, neither $u\leq z$, nor $u\leq z'$ holds. If $u\leq z'$, then we conclude  $u\leq z$ since
$z'<z$. 
Conversely, if $u\leq z$, then $u\in\set{x,y}$ since $x$ and $y$ are the only elements of $P_8$ below $z$, whereby 
$u\leq z'$. This shows that the map in question is an order isomorphism and completes the proof of \eqref{eqpbxLfjTmCxk}. Thus, we have also proved  \eqref{eqgzerhBrPqznk}.  

Next, we define $o:=c\wedge e$ and $i:=d\vee f$ in $L$. They are distinct new elements since $\set{a,c,e,g}$ and $\set{b,d,f,h}$ are antichains. We have  that 
\begin{equation}
d\vee f=i,\, c\wedge e=o,\, 
 b\wedge e=o,\ c\wedge f=o,\, d\vee g=i,\, f\vee c=i,
\label{eqmrqflDktNms}
\end{equation}
since the first two of these equalities are due to definitions and the rest are easy consequences; for example, 
$b\wedge e=b\wedge d\wedge e=c\wedge e=o$ while the rest follow by duality or symmetry.
\end{preparation}

\begin{computation} 
For 
the elements $a,b,\dots,h,o,i$ subject to \eqref{eqgzerhBrPqznk}  and \eqref{eqmrqflDktNms}, the $\many$-value is $84.5$; see Terminology~\ref{terminSvL}. In other words, we obtain with our usual technique (that is, using the program and  Lemma~\ref{lemmapartext}) that $\many(L)\leq 84.5$. Since this estimate is too week to derive a contradiction, we distinguish two cases.

\eset1 We assume that $b\vee d=i$. Then $b\vee e=i$
also holds since $b\vee e=b\vee c\vee e=b\vee d=i$. Adding these two equalities to  \eqref{eqgzerhBrPqznk}  and \eqref{eqmrqflDktNms}, the $\many$-value is 79,  which excludes this case. 

\eset2 We assume that $x:=b\vee d\neq i$. We also have that $b\vee e=x$ since $b\vee e=b\vee c\vee e=b\vee d$. Now we have eleven elements and, in addition to the two equalities just mentioned,  \eqref{eqgzerhBrPqznk},  and \eqref{eqmrqflDktNms}.
Since the $\many$-value is $77.25$, this case is also excluded. 

Both cases have been excluded, which proves Lemma~\ref{lemma8fence}.\qedhere
\end{computation}
\end{proof}

The lemma we have just proved trivially implies the following statement.

\begin{corollary}\label{corolAnEnpFnp} If $L$ is a lattice with \smany{} subuniverses and $n\geq 1$, then none of $A_n$,  $E_{n+1}$, and  $F_{n+1}$ is a subposet of $L$. 
\end{corollary}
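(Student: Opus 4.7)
The plan is to show, for each of the three families $A_n$ ($n\geq 1$), $E_{n+1}$ ($n\geq 1$), and $F_{n+1}$ ($n\geq 1$), that the lattice in question contains either the eight-element fence or the eight-crown as a subposet; Lemma~\ref{lemma8fence} then yields the conclusion. Indeed, since being a subposet is transitive, if (say) $A_n$ were a subposet of $L$, then so would be the eight-crown, contradicting Lemma~\ref{lemma8fence} because $\many(L)>83$.

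For $A_n$ with $n\geq 1$, the paragraph preceding Lemma~\ref{lemma8fence} already records that the eight-crown is a subposet of $A_1$. The lattices $A_n$ for $n\geq 1$ are the "taller" members of the $A$-family in Figure~\ref{figAB}, all obtained by stacking additional levels around the same crown-like region; in particular, $A_1$ embeds as a subposet of every $A_n$ with $n\geq 1$. Composing the two observations, the eight-crown is a subposet of $A_n$ for all $n\geq 1$, which is enough.

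For $E_{n+1}$ and $F_{n+1}$ with $n\geq 1$, a direct inspection of Figures~\ref{figE} and~\ref{figF} is used: as soon as the index is at least two, the ladder part of the diagram contains a zig-zag of length eight, i.e., a sequence of elements $x_1<x_2>x_3<x_4>x_5<x_6>x_7<x_8$ in which each $x_i$ lies on an opposite side of the ladder from $x_{i\pm 1}$. I would write down these eight elements explicitly from the figures and then verify that the only comparabilities among them in the lattice are the seven covering ones drawn in the fence, so that their induced subposet is exactly the eight-element fence.

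The only step that demands any attention at all is the "no spurious comparability" check in the last paragraph; but this is a purely pictorial verification on the Kelly--Rival diagrams of $E_{n+1}$ and $F_{n+1}$, and once the eight zig-zag elements are identified it is immediate. This is precisely why the deduction was called trivial in the statement leading up to the corollary, and no new lattice-theoretic or computational input beyond Lemma~\ref{lemma8fence} is required.
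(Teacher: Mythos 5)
Your overall strategy---exhibit the eight-element fence or the eight-crown as a subposet of each of $A_n$, $E_{n+1}$, $F_{n+1}$, and then invoke Lemma~\ref{lemma8fence} together with transitivity of the subposet relation---is exactly what the paper intends (the paper offers no further detail, calling the implication trivial). Your treatment of $E_{n+1}$ and $F_{n+1}$ for $n\geq 1$ is correct: once the index is at least $2$, the ladder part of these lattices is a zig-zag on at least nine elements, and eight consecutive ones induce the eight-element fence; the ``no spurious comparability'' check you defer to the figures does go through.

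The argument for $A_n$ with $n\geq 2$, however, contains a false step. The lattices $A_n$ are not obtained by ``stacking additional levels'': every $A_n$ with $n\geq 1$ has length $3$ and is a $(2n+6)$-crown with a least and a greatest element adjoined (for $n=0$ this degenerates to the eight-element Boolean lattice). Consequently $A_1$ is \emph{not} a subposet of $A_n$ for $n\geq 2$, and neither is the eight-crown: in an induced copy of the eight-crown each chosen maximal element must lie above exactly two chosen minimal elements, and chasing this requirement around a $(2n+6)$-crown with $2n+6\geq 10$ forces more than four minimal elements, while the adjoined bounds are comparable to everything and so cannot serve as crown elements. Thus your sentence ``the eight-crown is a subposet of $A_n$ for all $n\geq 1$'' fails for $n\geq 2$. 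The repair is immediate, and it is precisely why Lemma~\ref{lemma8fence} handles the fence and the crown simultaneously: for $n=1$ use the eight-crown (as recorded before Lemma~\ref{lemma8fence}), and for $n\geq 2$ take eight consecutive elements of the $(2n+6)$-crown inside $A_n$; these induce the eight-element \emph{fence}, since the closing edge disappears when the two endpoints are no longer adjacent in the larger crown. With that correction the corollary follows as you intend.
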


In the rest of the paper, due to Corollaries~\ref{corolGnHn} and \ref{corolAnEnpFnp} and the Duality Principle, we need to exclude only finitely many members of the infinite list $\krlist$ as  subposets of a finite lattice $L$ with \smany{} subuniverses. After the proofs of Lemmas~\ref{lemmaKot} and \ref{lemma8fence}, our plan to exclude that a given member $X$ of $\krlist$ occurs as a subposet of a lattice $L$ with $\many(L)>83$ is the following. After assuming that $X$ is a subposet of $L$, first we need some lattice theoretical preparation to ensure a feasible computational time. In the second phase, we reduce the estimate on  $\many(L)$  by assuming equations and introducing new elements in a systematic way until we obtain that $\many(L)\leq 83$. In other words, we keep branching cases until all ``leaves of our parsing tree'' have $\many$-values at most 83. 
Unfortunately, this plan requires quite a lot of work; 
see Table~\eqref{tablazat} later. In the rest of the paper, we present  some of the details in order the give a better impression how our plan works. The rest of the details are 
given by the output files of our program and some of them in the extended version of the paper; see Proof Technique~\ref{prooftechniqueXegy} for their coordinates.

\begin{remark}\label{remarkWmthnK}
One may think of the following possibility: if $X\in\krlist$ is a subposet of $L$ with $\many(L)>83$, $c,e\in X$, and $c\vee_X e=g$, then 
either $c\vee_L e=g$ in $L$, or $x:=c\vee_L e<g\in L\setminus X$. If we could show that 
\begin{equation}\left.
\parbox{9cm}{if $X$ belongs to $\krlist$, then the second alternative (with $x$) \emph{always} yields a better (that is, smaller) estimate of $\many(L)$,}\right\}
\label{eqpbxsCndwthXx}
\end{equation}
then $X$ being a \emph{sublattice} would give the worst estimate but even this estimate would be sufficient to imply Theorem~\ref{thmreform} by  Lemma~\ref{lemmakrlat}.
We do not know if \eqref{eqpbxsCndwthXx} is true; the following example, in which $X$ happens not to be in $\krlist$, illustrates why \eqref{eqpbxsCndwthXx} and Problem~\ref{problemBG} are probably  difficult. 
\end{remark}

\begin{example}[Example to indicate difficulty]\label{exampledifficulty}
Let us denote by $X$ the subposet $\set{c,d,e,f,g,o,i}$ of $H_0$; see Figure~\ref{figGH}. 
Note that $X$ is a lattice but not a sublattice of $H_0$.
If $X$ is a subposet of a finite lattice $L$ such that
\begin{equation}
c\vee_L e=g\,\,\text{ and }\,\,g\wedge_L f=d,
\label{eqzhGtmVbpQntx}
\end{equation}
then $\many$ for the weak partial subalgebra of $L$ with base set $\set{c,d,e,f,g,o,i}$ and the equalities of \eqref{eqzhGtmVbpQntx} equals 192. So \eqref{eqzhGtmVbpQntx} is appropriate to show that $\many(L)\leq 192$. 
However, if drop  the first equality in \eqref{eqzhGtmVbpQntx}
and replace it by $c\vee_L e=x$, where $x<g$, then the weak partial subalgebra with base set $\set{c,d,e,f,g,o,i,x}$ and  equalities $c\vee e=x$ and $g\wedge f=d$ gives a worse estimate,
$\many(L)\leq 196$. 
\end{example}

\section{The rest of the lemmas and some proofs}\label{sectionrestlemmas}
In order to complete the proof of Theorem~\ref{thmreform}, we still need the following eight lemmas, in which $L$ denotes a finite lattice.

\begin{lemma}\label{lemmaonB} If $\many(L)>83$, then $B$ is not a subposet of $L$.
\end{lemma}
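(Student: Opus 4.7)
The plan is to follow the same strategy that was used for Lemmas~\ref{lemmaKot} and \ref{lemma8fence}: assume for contradiction that $\many(L)>83$ and that $B$ is a subposet of $L$, perform a lattice-theoretical preparation to replace the elements of $B$ in $L$ by elements satisfying many join/meet equalities, and finally branch into a small number of cases whose $\many$-values can be computed by \emph{subsize}. Since $\many(B)=54$ by Lemma~\ref{lemmakrlat}, we know that once all constraints compatible with the covering relations of $B$ are imposed, the $\many$-value does fall below $83$; the task is only to show that the same holds after every branching that is allowed by the weaker assumption of being merely a subposet.

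In the preparatory phase, I would process the elements of $B$ from top to bottom and, whenever an element $x\in B$ covers two incomparable elements $u,v\in B$, replace $x$ by $x':=u\vee_L v$. Exactly as in the proof of Lemma~\ref{lemmaencaps2ladder}, $u\vee_L v\leq x$, the element $x'$ is incomparable with every element incomparable with $x$, and previously established equalities that involve $x$ on the meet side remain valid after the replacement. I would then dually process the elements from bottom to top, replacing each element covered by two incomparables by their meet in $L$. This yields an isomorphic copy of $B$ inside $L$, together with a set $T$ of join and meet equalities along every covering relation of $B$; further consequent equalities on non-covering incomparable pairs follow by transitivity (in the style of $e\wedge d=b$ in the proof of Lemma~\ref{lemmaKot}). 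These data define a weak partial subalgebra $\alg B^{(0)}$ of $L$.

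In the computational phase, \emph{subsize} will in general return $\many(\alg B^{(0)})>83$, so I branch on the remaining undetermined joins and meets of incomparable pairs of $B$. For every such pair $(x,y)$, either $x\vee_L y$ equals an element already present in our partial algebra (the order already forces this element to lie in a small explicit set), or $x\vee_L y$ is a genuinely new element of $L$ lying strictly below every upper bound of $\set{x,y}$ that is currently in the partial algebra; the dual dichotomy applies to $x\wedge_L y$. In each leaf of the resulting case tree I add the chosen constraint, together with all its immediate consequences, and invoke \emph{subsize}. By Lemma~\ref{lemmapartext}, if every leaf satisfies $\many\leq 83$, we reach a contradiction with $\many(L)>83$, and hence $B$ cannot be a subposet of $L$.

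The main obstacle I expect is controlling the size and breadth of the case tree. Since \emph{subsize} runs in time exponential in the number of elements of the partial algebra, each introduction of a new auxiliary element makes subsequent branches markedly more expensive; on the other hand, refusing to introduce new elements produces very few constraints and leaves the $\many$-value well above $83$. The right order of replacements in the preparatory phase, and the right order in which to branch on the incomparable pairs afterwards, must therefore be chosen so that strong consequent equalities propagate as early as possible and the tree closes in just a handful of leaves, in the spirit of the two-case argument of Lemma~\ref{lemmaKot}.
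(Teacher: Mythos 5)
Your overall strategy is the paper's own: prepare a weak partial subalgebra by adjusting the elements of $B$ inside $L$, then branch on the undetermined joins and meets and close every leaf with the program via Lemma~\ref{lemmapartext}. However, as written the plan has a concrete gap that matters precisely for $B$. Your preparation replaces an element $x$ covering two incomparable elements $u,v$ by $u\vee_L v$, and your computational phase branches two ways on each incomparable pair. But $B$ contains elements with \emph{three or more} (lower) covers: in the labelling of the paper's proof, $e$ has the three upper covers $a,b,c$ (and, dually, the top has three lower covers). For such an element you cannot normalize so that \emph{every} pair of its covers meets (or joins) to it --- only the meet of all three can be arranged to equal $e$ --- and the branch of your tree in which all three pairwise meets $a\wedge b$, $a\wedge c$, $b\wedge c$ are new elements does not close by a plain program evaluation. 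The missing idea is Lemma~\ref{lemmacube} (the Antichain Lemma, in its dual form): it guarantees that, in a lattice with \smany{} subuniverses, one of the three pairwise meets already equals $a\wedge b\wedge c=e$, turning this into a three-way split; the paper's proof of this lemma opens with exactly that step (``two of $a,b,c$ intersect at $e$'') and then uses the $a\leftrightarrow c$, $d\leftrightarrow g$ symmetry of $B$ to merge two of the three subcases, leaving the two top-level cases of the actual argument.

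Beyond that, what you offer is a proof plan rather than a proof. For a statement of this kind the content \emph{is} the explicit case tree together with its program-verified $\many$-values (eleven final values for $B$, per Table~\eqref{tablazat}); asserting that a suitable tree exists because $\many(B)=54<83$ does not establish that every leaf reachable under the weaker subposet hypothesis evaluates below $83$. Without exhibiting the tree --- or at least the Antichain-Lemma split and the symmetry reduction that make it finite and closable --- the lemma is not yet proved.
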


\begin{lemma}\label{lemmaonC} If $\many(L)>83$, then $C$ is not a subposet of $L$.
\end{lemma}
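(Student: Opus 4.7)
The plan is to imitate the two-phase structure used for Lemma~\ref{lemmaKot}: a lattice-theoretical preparation that assembles enough equalities in $L$ to constrain a weak partial subalgebra on the vertices of $C$, followed by a computational branching whose leaves the program can dispatch. Assume for contradiction that $\many(L)>83$ and $C$ is a subposet of $L$, with elements labelled as in Figure~\ref{figCDK}.

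In the preparation phase I would adjust the vertices of $C$ inside $L$ one at a time so that each cover in the Hasse diagram of $C$ becomes a genuine join or meet in $L$. The recipe is the one used in the proofs of Lemma~\ref{lemmaKot} and Lemma~\ref{lemmaencaps2ladder}: if $z$ covers $x$ and $y$ in $C$, replace $z$ by $x\vee_L y$ and check, using the incomparabilities that $z$ has with other vertices, that the subposet is still isomorphic to $C$; dually for meets. Adjusting top-down and then bottom-up in a careful order keeps previously established equalities valid, and yields a weak partial subalgebra $\alg C^{(0)}$ whose constraint list $T$ encodes every edge of the diagram of $C$ as a join or meet, together with the routine equalities obtained by associativity (such as $e\wedge d=e\wedge(f\wedge d)=b$ in the $K_5$ proof).

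In the computational phase I would feed $T$ to the program. If $\many(\alg C^{(0)})\leq 83$, Lemma~\ref{lemmapartext} directly contradicts $\many(L)>83$ and we are done; otherwise I would branch on the status of undetermined joins and meets of incomparable pairs. For each such pair $(u,v)$, one subcase identifies $u\vee_L v$ with an already-present vertex, a second introduces a new element of $L$ together with the equalities forced by its position, and, whenever a three-element antichain appears, Lemma~\ref{lemmacube} cuts the alternatives for $u\vee v\vee w$ down to at most two. I keep branching, recomputing $\many$ at each leaf with the program, until every leaf falls at or below~$83$. Since $\many(C)=68.5$ by Lemma~\ref{lemmakrlat}, I expect that a small branching — comparable in size to the two-case split used for $K_5$ — should suffice.

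The main obstacle, flagged by Remark~\ref{remarkWmthnK} and illustrated by Example~\ref{exampledifficulty}, is that adjoining a genuinely new element for an unrealized join or meet can \emph{increase} rather than decrease the $\many$-estimate, so a naive depth-first expansion need not terminate with all leaves below~$83$. The real work is thus to pick the order of branchings so that every subcase closes: branch first on those joins/meets whose "new element" branch still drops $\many$, and use the Antichain Lemma together with Lemma~\ref{lemmaBool8} to kill any branch that would otherwise force an $A_0$-sublattice. This is precisely the "exceptionally tedious" part acknowledged in the remark preceding Lemma~\ref{lemmakrlat}, and the verifiable output of the proof will be the handful of input files (one per leaf of the branching tree) posted at the author's website per Proof Technique~\ref{prooftechniqueXegy}.
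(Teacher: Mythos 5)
Your two-phase strategy---a lattice-theoretical normalization followed by a program-evaluated case tree closed off via Lemma~\ref{lemmapartext}---is exactly the strategy of the paper's proof of Lemma~\ref{lemmaonC}, which is carried out in the appendix file \texttt{C-output.txt}. However, two concrete problems keep the proposal from being a proof. First, the preparation phase as you describe it cannot be executed for $C$: you want every cover realized as a join in the spirit of a \eqref{eqpbx2emb}-embedding, but in $C$ the element $b$ has \emph{three} pairwise incomparable lower covers $d,e,f$ (and $i$ has the three lower covers $a,b,c$). No replacement argument yields $b=d\vee e=d\vee f=e\vee f$; the most one gets, after replacing $b$ by $d\vee e\vee f$ and invoking Lemma~\ref{lemmacube}\eqref{lemmacubea}, is that $b$ is the join of \emph{some two} of $d,e,f$, and which two is a genuine three-way case split, not a normalization. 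The paper's preparation accordingly fixes only the meet-side equalities ($a\wedge b=d$, $b\wedge c=e$, $d\wedge e=u$, $u\wedge f=o$, together with their consequences such as $a\wedge c=u$ and $d\wedge f=o$) and \emph{opens} the computation with precisely that three-way split on $b$; the joins into $i$ are likewise deferred to subcases.

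Second, and more fundamentally, the proposal stops where the proof begins. You correctly observe (citing Remark~\ref{remarkWmthnK} and Example~\ref{exampledifficulty}) that introducing a new element for an unrealized join can raise the $\many$-estimate, so a naive expansion need not close, and you defer "the real work" of choosing a branching order to future effort. But exhibiting a branching tree all of whose leaves evaluate to at most $83$ \emph{is} the content of the lemma: the paper's tree for $C$ has twelve evaluated leaves, with $\many$-values between $65.875$ and $82.5$. The heuristic that $\many(C)=68.5$ should make a small branching suffice is precisely the inference that Remark~\ref{remarkWmthnK} cautions cannot be taken for granted. As written, your argument shows that the method is the right one and might succeed, not that it does.
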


\begin{lemma}\label{lemmaonD} If $\many(L)>83$, then $D$ is not a subposet of $L$.
\end{lemma}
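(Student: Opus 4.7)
The plan is to follow the template established in the proofs of Lemmas~\ref{lemmaKot} and \ref{lemma8fence}. Suppose for contradiction that $\many(L)>83$ and that $D$ is a subposet of $L$; I want to derive $\many(L)\leq 83$ via Lemma~\ref{lemmapartext} applied to a suitable weak partial subalgebra on the base set $D$ (possibly enlarged by a few auxiliary elements introduced during case splitting). The initial morale booster is Lemma~\ref{lemmakrlat}\eqref{lemmakrlata}, which tells us $\many(D)=76$; so if $D$ happened to be a \emph{sublattice} of $L$ there would be nothing to prove, and all the real work concerns the case when joins and meets in $L$ differ from those computed inside the subposet $D$.

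In the lattice theoretical preparatory part, I would traverse the diagram of $D$ from Figure~\ref{figCDK} cover by cover and, in a carefully chosen order, replace each element $z\in D$ by a suitable join or meet in $L$ of neighbours of $z$ already present in the subposet. Namely, whenever $z$ covers two distinct elements $x,y$ in $D$, I would replace $z$ by $x\vee_L y$ (which sits between $z$ and its predecessor value by the subposet property) and argue, as in the proofs of Lemmas~\ref{lemmaKot} and \ref{lemmaencaps2ladder}, that this preserves the order-isomorphism with $D$ and leaves all previously installed equalities intact; dually for meets. The order of adjustment is delicate: joins near the bottom have to be installed before the meets that would use them as meetands, etc. Many further equalities such as $x\wedge_L y = (x\wedge_L u)\wedge_L y$ are then forced by associativity together with the comparabilities already present in $D$. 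The outcome is a list of constraints defining a weak partial subalgebra $\alg D^{(0)}$ of $L$ on the underlying set $D$.

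In the computational part, I would feed $\alg D^{(0)}$ to the \emph{subsize} program. If the output is already at most $83$, Lemma~\ref{lemmapartext} contradicts $\many(L)>83$ and we are done. Otherwise I would branch as in the proof of Lemma~\ref{lemmaKot}: pick an incomparable pair $x\parallel y$ in $D$ whose join or meet in $L$ has not yet been pinned down, and split into subcases according to whether $x\vee_L y$ coincides with a particular existing element of $D$ that lies above both $x$ and $y$, or equals a fresh element strictly below the previous candidate (which is then added to the base set); and analogously for meets. In each branch I would install the new constraint and all its forced consequences and rerun \emph{subsize}, iterating until every leaf of the parsing tree has $\many$-value at most $83$. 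Whenever an element of $D$ has three lower covers or three upper covers in $L$, Lemma~\ref{lemmacube} can be invoked to collapse three-way branches to the binary pattern above.

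The main obstacle will be keeping the case tree small while driving every leaf below the $83$ threshold. As Remark~\ref{remarkWmthnK} and Example~\ref{exampledifficulty} warn, it is \emph{not} generally true that introducing a fresh intermediate element lowers the $\many$-value, so the order in which the branches are explored and which alternative is pushed first must be decided by hand for each branch: sometimes identifying $x\vee_L y$ with an existing element gives the smaller value, sometimes the opposite. The actual counting in each leaf is routine for the program; the combinatorial design of a covering family of cases, together with an argument that no case has been forgotten, is where the human effort concentrates.
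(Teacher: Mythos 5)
You have correctly identified the paper's general method, but what you have written is a plan for finding a proof, not a proof. The entire mathematical content of this lemma, beyond the framework already set up in Section~\ref{sectiontools}, is the \emph{specific} data: a concrete normalized constraint set on the subposet $D$, a concrete covering family of cases, and the verification that every leaf of the resulting case tree has $\many$-value at most $83$. None of this appears in your proposal. It is not automatic that the branching process terminates successfully --- a priori the leaves could stabilize above $83$, and Remark~\ref{remarkWmthnK} and Example~\ref{exampledifficulty} (which you cite) show precisely that one cannot argue abstractly that refining a case lowers the estimate. Saying ``iterate until every leaf has $\many$-value at most $83$'' assumes the conclusion. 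The paper's actual proof (the appendix file for $D$) consists of a fixed preparation
$d\vee c=a$, $d\vee g=a$, $c\vee e=b$, $g\vee e=b$, $f\vee g=c$, $a\vee b=i$, $d\vee e=i$, $d\vee b=i$, $a\vee e=i$, $f\wedge g=o$,
followed by a tree with exactly five evaluated leaves (with $\many$-values $80$, $81$, $80.5$, $66.375$, $81.5$); that tree, and nothing less, is the proof.

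A second, more local gap: your preparatory step asserts that one can always renormalize so that \emph{every} element covering two others becomes their join in $L$ (and dually), i.e.\ that a \eqref{eqpbx2emb}-embedding always exists. The paper proves this only for the encapsulated $2$-ladder (Lemma~\ref{lemmaencaps2ladder}) and establishes weaker, hand-tailored normalizations case by case elsewhere. For $D$ this matters: $D$ contains an element with \emph{three} covers forming a $3$-antichain, so the paper cannot (and does not) force all three pairwise joins/meets to coincide with the covering element; instead it invokes the Antichain Lemma~\ref{lemmacube} to conclude that two of the three pairwise operations agree with the total one, exploits the automorphism of $D$ swapping two of them, and branches into the two remaining configurations. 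Your uniform normalization recipe silently skips over exactly the point where the argument for $D$ requires a different idea.
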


\begin{lemma}\label{lemmaonE0} If $\many(L)>83$, then $E_0$ is not a subposet of $L$.
\end{lemma}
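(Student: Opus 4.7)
Proof proposal for Lemma~\ref{lemmaonE0}.

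Suppose for contradiction that $E_0$ is a subposet of a finite lattice $L$ with $\many(L)>83$. I will follow the two-phase schema established in the proofs of Lemmas~\ref{lemmaKot} and \ref{lemma8fence}: a lattice-theoretical preparation, followed by a computational case split, each leaf of which is handed to the program to show that the corresponding $\many$-value is at most $83$, contradicting Lemma~\ref{lemmapartext}.

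In the preparatory part, I would fix the notation of the elements of $E_0$ as given in Figure~\ref{figE} and then modify these elements in $L$, one at a time and in a carefully chosen order, exactly as was done in the proof of Lemma~\ref{lemmaencaps2ladder}. The goal is to arrange that whenever an element $u$ of $E_0$ covers two distinct elements $v,w$ of $E_0$, we have $u=v\vee_L w$, and dually for meets. The verification that each such replacement preserves the subposet embedding is the routine incomparability check: if $u$ is replaced by $u':=v\vee_L w$, one only has to observe that $u'\leq u$, hence $x\not\leq u$ implies $x\not\leq u'$, and if $u'\leq x$ then $v\leq x$ and $w\leq x$, forcing $x=u$ in the original subposet. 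Using this, I can also deduce further equalities among non-cover pairs (as in \eqref{LemmaF1c}--\eqref{LemmaF1d} for $F_1$); for instance, if $v\prec u\succ w$ and $v\prec u'\succ w'$ with $u'>u$ in $E_0$, then $u'\geq v\vee_L w'\geq u\vee_L w'=u'$ forces $v\vee_L w'=u'$. All such forced equalities are added to the list of constraints defining a weak partial subalgebra of $L$.

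In the computational part, I would feed this initial partial algebra to the program. If the resulting $\many$-value already falls at or below $83$, we are done. Otherwise, the value will be only slightly above $83$ (as happened in the proofs above), and we have to branch. I would pick a pair of incomparable elements $x,y\in E_0$ for which $x\vee_L y$ or $x\wedge_L y$ is not yet constrained; in each branch we either identify $x\vee_L y$ with an existing element of $E_0$ that lies above both (using Lemma~\ref{lemmacube} to control three-element antichains among upper covers) or we introduce a genuinely new element, placed in the appropriate position dictated by the comparabilities already present. Each added constraint is propagated to all of its immediate consequences before invoking the program, mirroring the style of \azeset1 and \azeset2 in the proof of Lemma~\ref{lemmaKot}. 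The Antichain Lemma will be essential whenever we encounter three pairwise incomparable elements, since it allows us to split into only two effective subcases rather than three.

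The principal obstacle, as in the earlier lemmas, is to keep the branching tree shallow: each additional case doubles the work and raises the risk that some leaf still has $\many$-value above $83$. Because $\many(E_0)=60.5$ by Lemma~\ref{lemmakrlat}\eqref{lemmakrlata} is comfortably below $83$, there is ample slack, and the hope is that after the preparation alone (or after one or two branches) every surviving case has $\many$-value at most $83$. If not, a further branching on an incomparable pair $\{x,y\}$ that has been flagged as ambiguous by the program's output will reduce the value; the expectation is that the process terminates quickly, giving the required contradiction with $\many(L)>83$ via Lemma~\ref{lemmapartext}, and thus proving the lemma.
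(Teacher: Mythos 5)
Your proposal describes the paper's general method accurately, but it does not constitute a proof of Lemma~\ref{lemmaonE0}; it is a plan whose critical steps are left as hopes. Two concrete problems. First, the preparation you propose --- arranging a \eqref{eqpbx2emb}-embedding of $E_0$ into $L$ --- is not achievable: in $E_0$ the bottom $o$ has \emph{three} upper covers $e,f,g$ and the top $i$ has \emph{three} lower covers $a,b,c$. Your replacement argument works for an element with exactly two covers or two lower covers, but when $i$ covers $a,b,c$ you may replace $i$ by $a\vee_L b\vee_L c$, and then Lemma~\ref{lemmacube} only guarantees that \emph{some} pair among $a,b,c$ joins to this element; the other two pairwise joins can be strictly smaller, and no further replacement repairs this. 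The same happens dually at $o$. This is precisely why the paper's proof (in the appendix \texttt{E0-output.txt}) cannot assume $e\wedge f=o$: it sets $u:=e\wedge f$, splits into five main cases according to how $e\wedge g$ and $f\wedge g$ relate to $u$ and $o$, and within each splits into up to seven subcases for the joins of $a,b,c$, for a total of $37$ evaluated leaves (see Table~\eqref{tablazat}).

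Second, your ``ample slack'' argument --- that $\many(E_0)=60.5<83$ leaves room, so that the preparation alone or one or two branches should suffice --- is exactly the unjustified inference the paper warns against. That $\many(X)$ is small for the lattice $X$ does not bound $\many$ of the weak partial subalgebras arising when $X$ is merely a \emph{subposet} of $L$; this is the open Problem~\ref{problemBG}, and Example~\ref{exampledifficulty} shows that introducing a new element for a join can make the estimate \emph{worse}, so there is no a priori guarantee that the branching terminates with all leaves at or below $83$. (Also, Lemma~\ref{lemmacube} produces a three-way, not a two-way, split at a three-element antichain of covers, contrary to your remark.) A proof requires actually exhibiting the case tree and the verified $\many$-value of every leaf; your proposal stops before that work begins.
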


\begin{lemma}\label{lemmaonE1} If $\many(L)>83$, then $E_1$ is not a subposet of $L$.
\end{lemma}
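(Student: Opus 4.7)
The plan is to follow the two-phase template established in the proofs of Lemmas~\ref{lemmaKot} and \ref{lemma8fence}. Assume for contradiction that $\many(L)>83$ and $E_1$ is a subposet of $L$; I will work inside $L$, writing $\vee$ and $\wedge$ for its operations and using the element names of $E_1$ from Figure~\ref{figE}.

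In the lattice theoretical preparation, I would adjust the elements of $E_1$ inside $L$ much as in the proof of Lemma~\ref{lemmaencaps2ladder}: proceeding in a carefully chosen order, whenever an element $u\in E_1$ covers exactly two elements $v,w$ in $E_1$, replace $u$ by $v\vee w$, and dually for meets. Each replacement must be checked to preserve both the subposet isomorphism type and every equality already established, by exactly the sort of short monotonicity arguments used in the preparations of Lemmas~\ref{lemmaKot} and \ref{lemma8fence}. The end product is a \eqref{eqpbx2emb}-embedding of $E_1$ into $L$, which produces a weak partial subalgebra $\alg E_1^{(0)}$ of $L$ on the base set $E_1$ whose constraints consist of the cover-joins and cover-meets of $E_1$ together with the further equalities that follow automatically (for instance $x\wedge y=x\wedge z\wedge y$ whenever $x\geq z\geq y$ has already been established, as was used to derive the additional equalities in the proofs of Lemmas~\ref{lemmaKot} and \ref{corollaryF1}).

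In the computational part, I would feed $\alg E_1^{(0)}$ to the \emph{subsize} program. If the resulting $\many$-value is at most $83$, Lemma~\ref{lemmapartext} supplies the contradiction $\many(L)\leq 83$. Otherwise I branch, exactly as in cases (C1) and (C2) for $K_5$: choose a pair of incomparable elements $x,y$ whose $L$-join (or $L$-meet) is not yet pinned down, and split into the subcases where $x\vee_L y$ equals a specific existing element of the partial algebra (forced by the order relations among the elements already present, as in the deductions following \azeset1 of Lemma~\ref{lemmaKot}) versus where $x\vee_L y$ is a genuinely new element strictly below the current upper bounds of $\{x,y\}$, in which case I add that element and the consequent constraints. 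Iterating, I build a parsing tree whose leaves each have $\many$-value at most $83$, as in Terminology~\ref{terminSvL}.

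The hard part, as flagged by Remark~\ref{remarkWmthnK} and Example~\ref{exampledifficulty}, is that introducing a new element can \emph{raise} the $\many$-value rather than lower it, so it is not at all automatic that the tree terminates; the preparation must leave the partial algebra tight enough, and the case choices must be made astutely, so that every branch eventually drops below $83$ before the base set grows past what the program can handle in acceptable time. Since $\many(E_1)=31.125$ by Lemma~\ref{lemmakrlat}\eqref{lemmakrlatb}, there is considerable slack in the sublattice extreme, and I expect only a short tree of cases to be necessary, each leaf then being verified mechanically via the input and output files advertised in Proof Technique~\ref{prooftechniqueXegy}.
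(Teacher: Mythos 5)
Your overall template is the right one, but the preparation phase as you describe it does not fit $E_1$, and this is precisely where the lemma's difficulty lies. In $E_1$ the element $a$ has \emph{three} lower covers $c,d,j$, the top $i$ has three lower covers $b,a,e$, and $o$ has \emph{four} upper covers $f,g,h,j$. Your rule ``replace $u$ by $v\vee w$ whenever $u$ covers exactly two elements'' therefore only produces $c=f\vee g$, $d=g\vee h$ and, dually, $f=b\wedge c$, $g=c\wedge d$, $h=d\wedge e$; it leaves $a$, $i$ and $o$ completely unconstrained, and the resulting map is certainly not a \eqref{eqpbx2emb}-embedding (that would require $a=c\vee d=c\vee j=d\vee j$ simultaneously, which cannot be arranged and which Lemma~\ref{lemmacube}\eqref{lemmacubeb} essentially forbids). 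The paper's proof handles exactly these elements in the preparation: it lowers $a$ to $c\vee d\vee j$ (so that in the case $c\vee d=:r<a$ one still gets $r\vee j=a$), raises $o$ to $f\wedge g\wedge h$, and observes the key identity $f\wedge h=f\wedge c\wedge d\wedge h=f\wedge g\wedge h=o$, together with the $c\leftrightarrow d$ symmetry and the Antichain Lemma applied to the antichains $\set{c,d,j}$ and $\set{b,a,e}$; this is what keeps the case tree down to the five leaves recorded in Table~\eqref{tablazat}. Without these steps your initial partial algebra is far looser, and your branching would have to rediscover all of this ad hoc.

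The second, more basic gap is that for a statement of this kind the proof \emph{is} the explicit case tree together with the machine-verified $\many$-values of its leaves; you supply neither. Saying that one branches ``astutely'' until every leaf drops below $83$, and that you ``expect only a short tree,'' is a research plan rather than a proof --- and, as you yourself note via Example~\ref{exampledifficulty}, adding a new element can increase the $\many$-value, so termination is not automatic and cannot be taken on faith. (Also, the slack you infer from $\many(E_1)=31.125$ is illusory: that value concerns $E_1$ as a sublattice, whereas here only the poset structure is given, and the whole difficulty is bridging that gap.) To complete the argument you would need to exhibit the concrete constraints of each case --- in the paper these are: $c\vee d=a$ versus $c\vee d=r<a$ with $r\vee j=a$, crossed with $f\wedge g=o$ and $g\wedge h\in\set{o,\text{new}}$ versus both $f\wedge g$ and $g\wedge h$ new and incomparable --- and report that each evaluates to at most $83$.
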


\begin{lemma}\label{lemmaonF0} If $\many(L)>83$, then $F_0$ is not a subposet of $L$.
\end{lemma}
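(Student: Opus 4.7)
The plan is to mimic the arguments for Corollary~\ref{corollaryF1} and Lemma~\ref{lemma8fence}. First I would suppose for contradiction that $F_0$ is a subposet of $L$ with $\many(L)>83$. Since the encapsulated $2$-ladder $\fem$ is a sublattice of $F_0$ (see Figure~\ref{figF}), it is in particular a subposet of $L$, and Lemma~\ref{lemmaencaps2ladder} lets me assume that the inclusion $\fem\hookrightarrow L$ is a \eqref{eqpbx2emb}-embedding. This installs all the covering-join and covering-meet equalities of $\fem$ as constraints in a weak partial subalgebra of $L$, exactly as in \eqref{LemmaF1a}--\eqref{LemmaF1d}.

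Next, in the lattice-theoretical preparation, I would bring in the remaining elements of $F_0\setminus \fem$ one by one, using the same replacement technique as in the proofs of Lemmas~\ref{lemmaKot} and~\ref{lemmaencaps2ladder}: each such element is replaced by the corresponding $L$-join or $L$-meet of already-fixed elements of $F_0$, whenever this replacement is forced by the covering relations in $F_0$ and does not destroy the subposet isomorphism type. Each replacement adds further equations to the constraint list, and a small number of automatic consequences (of the shape $x\wedge z=(x\wedge y)\wedge z$, etc.) are added as well in order to force a sharper count from the program.

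With the preparation in place, in the computational part I would feed the full constraint list into \emph{subsize}. If the resulting $\many$-value is already $\le 83$, the proof ends; otherwise, I would branch systematically on undetermined joins and meets of incomparable pairs $x\parallel y$ arising from $F_0$, as in the proof of Lemma~\ref{lemma8fence}: each branch either identifies $x\vee_L y$ (or $x\wedge_L y$) with an element already present, adding a constraint, or introduces a new element, extending the partial algebra and producing fresh constraints from the induced comparabilities. At every leaf of the resulting parsing tree the program should certify $\many\le 83$, and combining all leaves with Lemma~\ref{lemmapartext} yields the desired contradiction.

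The genuine obstacle is tightness: by Lemma~\ref{lemmakrlat}\eqref{lemmakrlata} we have $\many(F_0)=83$ exactly, so the sublattice branch sits on the boundary and gives no slack. Every other branch therefore must be engineered carefully so that the extra element or the extra equation strictly pulls $\many$ below $83$; any sloppy case split will leave a leaf with value $83.\mathord{\text{something}}$ and the proof will not close. Balancing the exponential cost of \emph{subsize} against the need for sufficiently many fine-grained ramifications to push every leaf strictly below the boundary is the part demanding the most care.
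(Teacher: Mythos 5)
Your opening step contains a concrete error: the encapsulated $2$-ladder $\fem$ is \emph{not} a sublattice (nor even a subposet) of $F_0$. It is a ten-element lattice, whereas $F_0$ has only nine elements; $\fem$ is a sublattice of $F_1$, which is exactly how Corollary~\ref{corollaryF1} uses it. Consequently, Lemma~\ref{lemmaencaps2ladder} cannot be invoked here, the equalities \eqref{LemmaF1a}--\eqref{LemmaF1d} cannot be imported, and the phrase ``the remaining elements of $F_0\setminus\fem$'' is meaningless. The correct preparation must establish a \eqref{eqpbx2emb}-embedding of $F_0$ itself directly, by the element-by-element replacement technique (replace $c$ by $e\vee g$, then $e$ by $b\wedge c$, then $f$ by $e\wedge d$, and so on, checking each time that the subposet type is preserved), arriving at the six constraints $b\wedge c=e$, $e\vee g=c$, $c\vee d=a$, $d\wedge e=f$, $a\vee b=i$, $f\wedge g=o$. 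This requires its own (short but nontrivial) argument, which your proposal skips by appealing to a lemma that does not apply.

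Beyond that, the computational part of your proposal defers the entire substance of the lemma to an unspecified branching process that ``should'' terminate with every leaf at most $83$. The actual content of the proof is the explicit thirteen-leaf case tree, organized so that the selfduality of $F_0$ and of the prepared constraint set can be exploited to exclude dual cases for free; without exhibiting such a tree (or at least arguing that one exists), nothing has been proved. Two smaller points: the exclusion criterion is $\many\leq 83$, not $\many<83$, so the sublattice leaf with value exactly $83$ is excluded without any need to ``pull it strictly below the boundary''; and the relevant covering data of $F_0$ involve no element with three covers or three lower covers, so only two-way splits (old element versus new element) are needed, not the three-way splits of the Antichain Lemma.
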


\begin{lemma}\label{lemmaonG0} If $\many(L)>83$, then $G_0$ is not a subposet of $L$.
\end{lemma}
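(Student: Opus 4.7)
The plan is to follow the template established by Lemma~\ref{lemmaKot}, Lemma~\ref{lemmaencaps2ladder}, and Lemma~\ref{lemma8fence}: assume for contradiction that $G_0$ is a subposet of a finite lattice $L$ with $\many(L)>83$, carry out a lattice theoretical preparation to replace $G_0$ by a weak partial subalgebra of $L$ on the same underlying carrier (perhaps enlarged by a few new elements), and then feed a branching hierarchy of partial algebras to the program until every leaf has $\many$-value at most $83$, invoking Lemma~\ref{lemmapartext} at each leaf to contradict $\many(L)>83$.

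For the preparation, I would first exploit selfduality of $G_0$ and process the diagram covering-pair by covering-pair. Whenever an element $z$ covers two distinct elements $x,y$ in $G_0$, I would attempt to replace $z$ by $z':=x\vee_L y$; whenever $z$ is covered by two distinct elements $x,y$, I would replace $z$ by $z':=x\wedge_L y$. The order of replacements must be chosen so that an earlier equality $x\vee_L y=z$ is not destroyed by a later substitution (exactly as was done stepwise in the Encapsulated $2$-ladder Lemma), and at each step one verifies that the new element is still incomparable with precisely the same elements of $G_0$ as $z$ was, using transitivity of $\leq$ together with the previously established equalities. Once this process stabilizes, every cover relation of $G_0$ gives an equality in $L$ of the form $u\vee v=w$ or $u\wedge v=w$, and many further equalities (analogous to the $e\wedge d=b$ step in the proof of Lemma~\ref{lemmaKot}) follow by associativity and absorption through comparable intermediate elements. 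This initial constraint list then defines a weak partial subalgebra $\alg G_0^{(0)}$ of $L$ whose $\many$-value the program can compute.

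In the computational phase, I expect $\many(\alg G_0^{(0)})$ to exceed $83$, so branching is required. For each incomparable pair $(x,y)$ in $\alg G_0^{(0)}$ whose join (or meet) is not yet constrained, I would split into the case where $x\vee y$ equals a specific already-present element lying above both and the case where $x\vee y$ is a new element strictly below all such elements — the same dichotomy used in cases \azeset1 and \azeset2 of Lemma~\ref{lemmaKot}. Whenever the branching faces an element with three upper or lower covers, the Antichain Lemma~\ref{lemmacube} will cut the number of sub-branches from three to (effectively) two by forcing one of the pairwise joins to equal the full join. Each leaf is a weak partial subalgebra of $L$ whose $\many$-value can be computed; by Terminology~\ref{terminSvL}, any leaf with $\many$-value at most $83$ is excluded, and our obligation is to ensure every leaf satisfies this. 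The selfduality of $G_0$ can be used to cut the case tree roughly in half by identifying each case with its dual.

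The hardest part, I expect, will not be any single lattice-theoretical deduction but rather the combinatorial organization of the case tree: identifying a small collection of incomparable pairs whose successive resolution brings every branch below the $83$ threshold without producing too many leaves or making any single partial algebra so large that the $2^{|A|}$ enumeration in \emph{subsize} becomes infeasible. This is the same obstacle highlighted in Remark~\ref{remarkWmthnK} and Example~\ref{exampledifficulty}: adding a new element instead of collapsing to an existing one does not always improve the estimate, so the choice of which pair to split on (and in what order) genuinely matters and must be discovered by trial. Once an effective split sequence is found, the proof reduces to reading the program's output files, exactly as in the preceding lemmas.
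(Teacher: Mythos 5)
Your strategy is exactly the paper's: the actual proof of Lemma~\ref{lemmaonG0} consists of a short lattice-theoretical preparation followed by a case tree with $24$ leaves, each evaluated by the program and each with $\many$-value at most $83$; it lives entirely in the output file \texttt{G0-output.txt} (see Table~\eqref{tablazat}). The difficulty is that, as written, your text describes the proof obligation rather than discharging it. For a computer-assisted lemma of this kind the proof \emph{is} the concrete constraint list and the concrete branching order; you explicitly defer both (``must be discovered by trial''). In the paper the preparation fixes $a\vee b=i$, $a\wedge b=c$, $c\wedge d=g$, $a\wedge d=g$, $h\wedge j=o$ together with the two ternary facts $b=c\vee d\vee e$ and $h=f\wedge g\wedge e$ (notation of the appendix), and the case tree then splits three ways on which two of $c,d,e$ join to $b$, three ways on which two of $f,g,e$ meet to $h$, and two ways on the remaining pairs. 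None of that is recoverable from your proposal, so the lemma is not yet proved by what you submitted.

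Two points where your recipe would actively misfire on $G_0$. First, your preparation rule ``whenever $z$ covers two distinct elements $x,y$, replace $z$ by $x\vee_L y$'' breaks at the elements of $G_0$ having three upper or three lower covers (in the edge list of the appendix, $a$ has upper covers $c,d,g$ and $j$ has lower covers $e,f,g$): replacing such a $z$ by the join of only two of its covers can put the new element below the third cover's join and destroy the order-embedding, and you cannot normalize all three pairwise joins to equal $z$ simultaneously. The paper instead normalizes the full threefold join or meet in the preparation and postpones the pairwise question to the case analysis, where Lemma~\ref{lemmacube} yields a \emph{three-way} split (exactly one guaranteed pairwise join equals the total join; cf.\ Remark~\ref{remarknotdKzR}) --- not a reduction ``from three to (effectively) two'' as you assert. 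Second, although $G_0$ is selfdual, the preparation the paper actually uses for it is not, so the promised halving of the case tree by duality is not realized in the existing proof (contrast $F_0$ and $H_0$, where a selfdual preparation is arranged and exploited); making it available would require redesigning the preparation, and whether a selfdual one still drives every leaf below $83$ is precisely the kind of claim that only the program can settle.
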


\begin{lemma}\label{lemmaonH0} If $\many(L)>83$, then $H_0$ is not a subposet of $L$.
\end{lemma}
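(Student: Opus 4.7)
The plan is to follow the template established in the proof of Lemma~\ref{lemmaKot} (and used again for Lemma~\ref{lemma8fence}): assume for contradiction that $H_0$ is a subposet of $L$ with $\many(L)>83$, fix the notation of the elements from Figure~\ref{figGH}, and then run a \emph{lattice theoretical preparatory} phase followed by a \emph{computational} phase. In the preparatory phase I would walk through the covers of $H_0$ in a carefully chosen order, replacing each non-extremal element $u$ that covers two distinct elements $v,w$ by $v\vee_L w$ (and dually for lower covers), checking at each step that the incomparability pattern of $H_0$ is preserved, exactly as was done for $K_5$ and for $\fem$ in Lemma~\ref{lemmaencaps2ladder}. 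The auxiliary lattice $H_0^+$ drawn in Figure~\ref{figGH} suggests that one extra element (a newly introduced join or meet) should be adjoined at the preparatory stage; my intention is to perform precisely those replacements until a full list of ``cover equalities'' holds in $L$ and the resulting weak partial subalgebra of $L$ contains $H_0^+$ as its base set.

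Once the preparatory equalities are locked in, I would feed this weak partial subalgebra into the \texttt{subsize} program to compute its $\many$-value and invoke Lemma~\ref{lemmapartext}. If the value already drops to at most $83$, we are done. Otherwise, I branch on the undetermined joins (and, dually, meets) of incomparable pairs $x\parallel y$ in $H_0$: either $x\vee_L y$ equals one of the elements already listed, or $x\vee_L y$ is a brand new element that must be inserted into the partial algebra together with all the forced equalities $x\vee_L y\leq z$ for every $z\in H_0$ with $x,y\leq z$ in $H_0$. Whenever the branching creates a three-element antichain below a common upper bound, Lemma~\ref{lemmacube} cuts the number of subcases from the naive exponential count down to the three cases dictated by part~\eqref{lemmacubea}. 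Each branch is then checked by the program, and further branching is applied only to leaves whose $\many$-value still exceeds $83$, following Terminology~\ref{terminSvL}.

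The main obstacle I anticipate is the combinatorial explosion of cases: $H_0$ has more incomparable pairs than $K_5$ or the eight-crown, so a naive branching tree becomes too large both for human supervision and for the exponential-time behaviour of \texttt{subsize}. The crucial art is to choose the preparatory replacements so that (i) enough cover equalities are forced to bring the base estimate close to $83$ before branching begins, and (ii) the first branching choices are on pairs whose resolution cascades, via transitivity and the already-assumed equalities, into determining many further joins and meets at once, just as $b\vee d=i$ immediately forced $b\vee e=i$ in Case~\azeset1{} of Lemma~\ref{lemma8fence}. A good candidate for the first branching pair is one whose join lies just below the top of $H_0$, since a positive resolution there tends to collapse several other joins simultaneously. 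With the right ordering of replacements and branchings, I expect every leaf to land at $\many\le 83$, exactly as happened in the earlier lemmas, and Lemma~\ref{lemmaonH0} will follow by contradiction.
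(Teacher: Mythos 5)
Your plan is essentially the paper's own proof: the appendix devoted to $H_0$ performs exactly this preparation (normalizing $f\wedge g=d$, applying the Antichain Lemma to the three-cover elements $o$, $i$, $b$, $g$ to get the three-way splits, and exploiting the selfduality of $H_0$ via an explicit dual automorphism, with $H_0^+$ arising precisely in the branch where $a\vee b=:D$ is a new element rather than $d$), and then evaluates the resulting case tree with the program until all leaves have $\many$-value at most $83$. The only caveat is that what you have written is the strategy rather than its execution --- the substance of the proof is the $67$ evaluated leaves recorded in the output file, and your preparatory step of unconditionally adjoining the extra element of $H_0^+$ must instead be a two-way branch, since $a\vee_L b$ may well equal $d$.
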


\begin{proof}[Proof of Lemma~\ref{lemmaonF0}]
 For the sake of contradiction, suppose that $\many(L)>83$ but $F_0$ is a subposet of $L$. 

\begin{preparation} 
Unless otherwise stated, the lattice operations are understood in $L$; in notation, $x\vee y$ will mean $x\vee_L y$ and dually. Note that $F_0$ is a selfdual lattice and it has a unique dual automorphism
\[\begin{pmatrix}
i&a&b&c&d&e&f&g&o\cr
o&f&g&e&d&c&a&b&i
\end{pmatrix}.
\]
Since $e\vee_{F_0} g=c$, we have that $e\vee g\leq c$. If $c':=e\vee g<c$, then we  replace $c$ by $c'$. Observe that $e\not\leq d$ and $g\not\leq b$ imply that $c'\not\leq d$ and $c'\not\leq b$. Since $c'<c$ but $b\not\leq c$ and $d\not\leq c$, we also have that $b\not\leq c'$ and  $d\not\leq c'$. So it follows that the subposet $(F_0\setminus\set{c})\cup\set{c'}$ of $L$ is isomorphic to $F_0$.
 Hence, after replacing $c$ by $c'$ if necessary, we can assume that $e\vee g=c$. In the next step, after replacing $e$ by $e':=b\wedge c$, we assume that $b\wedge c=e$; we still have a subposet (isomorphic to) $F_0$. Clearly, $e\vee g=c$ remains valid, because  $c=e\vee g\leq e'\vee g\leq c$.
With $f':=e\wedge d\geq f$,  $f'\geq g$ would give that $e\geq g$ while $f'\leq g$ would lead to $f\leq g$. Hence, $f'\parallel g$. After replacing $f$ by $f'$ if necessary, we can assume that  $e\wedge d=f$. A dual argument allows us to assume that  $c\vee d=a$. In the next step, 
we can clearly assume that $a\vee b=i$ and $f\wedge g=o$.
To summarize, we have assumed that the inclusion map is a \eqref{eqpbx2emb}-embedding of $F_0$ into $L$, that is,
\begin{equation}
b\wedge c=e, \,\, e\vee g=c,\,\,c\vee d=a,\,\, d\wedge e=f,\,\, 
a\vee b=i,\,\,f\wedge g=o.
\label{eqfnnTtl}
\end{equation}
\end{preparation}

\begin{computation}
While splitting the possibilities into cases and subcases, we will benefit from the fact that both $F_0$ and \eqref{eqfnnTtl} are selfdual. We keep splitting (sub)cases to more specific subcases only as long as their $\many$-values are larger than 83; this tree-like splitting structure will have thirteen leaves, that is, thirteen subcases with small $\many$-values that cover all possibilities. Every case below is either \emph{evaluated}, that is, its $\many$-value is computed by the program, or the case is split further. Of courses, we have evaluated all cases to see which of them need further splitting, but  we present the $\many$-values only of the non-split cases, because only the thirteen evaluated cases are needed in the proof. 
The (sub)cases are denoted by strings. When a case (C$\vec{\textup x}$) is mentioned, 
all the ``ancestor cases'', that is, (C$\vec{\textup y}$) for all meaningful prefixes $\vec y$ of $\vec{\textup x}$ are automatically assumed.

\ceset{(C1)}: $b\vee c=i$ is assumed; then $b\vee g=b\vee e\vee g=b\vee c = i$ also holds. 

\ceset{(C1a)}: $e\wedge g=o$. Since this is the dual of the previous assumption, we are in a selfdual situation. Observe that $b\wedge g=b\wedge c\wedge g=e\wedge g=o$.

\ceset{(C1a.1)}: $d\vee e=a$, then $b\vee d=b\vee e\vee d=b\vee a=i$.

\ceset{(C1a.1a)}: $c\wedge d=f$;  then $d\wedge g=d\wedge c\wedge g=f\wedge g=o$. Again, we are in a selfdual situation.

\ceset{(C1a.1a.1)}: $a\wedge b=e$; then $b\wedge d=b\wedge a\wedge d=e\wedge d=f$. 

\ceset{(C1a.1a.1a)}: $f\vee g=c$; then $d\vee g=d\vee f\vee g=d\vee c=a$. (Note that this case describes the situation when $F_0$ is a sublattice of $L$.) Since the $\sigma$-value of this case is 83, 
$L$ has few subuniverses, whereby (C1a.1a.1a) is excluded.

\ceset{(C1a.1a.1b)}: $f\vee g=:x$ such that $x\neq c$. (The notation ``$=:$'' means that $x$ is defined as $f\vee g$ and $f\vee g=x$ is a new constraint.) 
Clearly,  $x<c$. Using the incomparabilities among the elements of $F_0$, it is straightforward to see that $x$ is a new element. (In what follows in the paper, an element with a new notation will always be distinct from the rest of elements, but usually this fact will not be mentioned and its straightforward verification will be omitted.) Since 
$c=e\vee g\leq e\vee x\leq c$, we have that $e\vee x=c$. Since the $\many$-value of this case is $74.25$, (C1a.1a.1b)  is excluded. 
Thus, the case (C1a.1a.1) is also is excluded. Since (C1a.1a) is
 seldfual, the dual of (C1a.1a.1) is also excluded; this will be used in the next case.  

\ceset{(C1a.1a.2)}: $a\wedge b=:x>e$ and $f\vee g=:y<c$. Since 
$c\wedge b=e$ and $e\vee g=c$, it follows easily that $c\wedge x=e$ and  $e\vee y=c$
Since the $\many$-value is now $68$, (C1a.1a.2) is excluded. Thus, (C1a.1a) is also excluded.

\ceset{(C1a.1b)}: $c\wedge d=:x>f$. Then  $e\wedge x=f$  since $e\wedge d=f$.

\ceset{(C1a.1b.1)}: $x\wedge g=o$. Then  $d\wedge g=g\wedge c\wedge d=g\wedge x=o$.

\ceset{(C1a.1b.1a)} $a\wedge b=e$. Now the $\many$-value is $78.25$, excluding this case.

\ceset{(C1a.1b.1b)}: $a\wedge b=:y>e$. This case is excluded again since its $\many$-value is $78.5$. Thus, (C1a.1b.1) is also excluded.

\ceset{(C1a.1b.2)}: $x\wedge g=:y>o$. Here $y$ is a new element since  $g>y>o$, and we have that $d\wedge g=d\wedge c\wedge g=x\wedge g=y$. This case is excluded, because its $\many$-value is $79.375$.  Thus, (C1a.1b) and so (C1a.1) are also excluded. Furthermore, since (C1a) is selfdual, we conclude that dual of (C1a.1) is also excluded; this fact will be used in the next case.

\ceset{(C1a.2)}: $d\vee e=:v<a$ and $c\wedge d=:u>f$. Observe that  $c\vee v=a$ and $e\wedge u=f$, since $c\vee d=a$ and $e\wedge d=f$. Let $x:=e\vee u$. In order to verify its novelty, observe that $e\leq x\leq c$ since $e<c$ and $u\leq c$. But $x=e$ would imply $u\leq e$, whence $u=e\wedge u=e\wedge c\wedge d= e\wedge d=f$, a contradiction. Also, $x=c$ would lead to $v=e\vee d= e\vee (u\vee d)= (e\vee u)\vee d=x\vee d=c\vee d=a$, a contradiction again. Hence, $e<x<c$, which implies easily that $x$ is a new element. 
We have that $d\vee x=v$ since $e \leq x\leq v$. Similarly, $b\wedge x=e$ since $e \leq x \leq c$.  Now the  $\many$-value of the situation is $66$, whereby this case is excluded. Thus, (C1a) is also excluded.

\ceset{(C1b)}: $e\wedge g=x >o$; then  $f\wedge x=o$ since $f\wedge g=o$.

\ceset{(C1b.1)}: $d\vee e=a$; then $b\vee d=b\vee e\vee d=b\vee a=i$.

\ceset{(C1b.1a)}: $c\wedge d=f$; then $d\wedge g=d\wedge c\wedge g=f\wedge g=o$.

\ceset{(C1b.1a.1)}: $a\wedge b=e$. Now $b\wedge d=b\wedge a\wedge d=e\wedge d=f$ and $\many=78.25$ excludes this case.

\ceset{(C1b.1a.2)}: $a\wedge b=:y>e$. Then  $c\wedge y=e$  since $c\wedge b=e$, and $\many= 72$ excludes this case.  Thus, (C1b.1a) is also excluded.

\ceset{(C1b.1b)}: $c\wedge d=:u>f$; then $e\wedge u=f$ since $e\wedge d=f$, and $\many=80$ excludes this case. Thus, (C1b.1) is also excluded.

\ceset{(C1b.2)}: $d\vee e=:v <a$; then $c\vee v=a$  since $c\vee d=a$.

\ceset{(C1b.2a)}: $c\wedge d=f$. Then $d\wedge g=d\wedge c\wedge g=f\wedge g=o$ and $\many=79.375$ excludes this case.

\ceset{(C1b.2b)}: $c\wedge d=:u>f$. Then  $e\wedge u=f$  since $e\wedge d=f$, and $\many=75.5$ excludes this case. Thus,  (C1b.2),  (C1b), and even  (C1) are excluded. Furthermore, since the underlying assumption, \eqref{eqfnnTtl}, is selfdual, the dual of (C1) is also excluded; this fact will be used below when (C2) is analysed.

\ceset{(C2)} $b\vee c=:t<i$ and $e\wedge g=:s>o$. Using $a\vee b=i$ and $f\wedge g=o$, we obtain that $a\vee t=i$ and $f\wedge s=o$. Also, $b\vee g=b\vee e\vee g=b\vee c=t$ 
and $b\wedge g=b\wedge c\wedge g=e\wedge g=s$, and so $\many=82.5$, excluding this case. All cases have been excluded, and the proof of Lemma~\ref{lemmaonF0} is complete. \qedhere
\end{computation}
\end{proof}

Note that the proof above required to compute an estimate for $\many(L)$ thirteen times. Let us call these thirteen values \emph{final $\many$-values}.  However, as mentioned previously,  many more values were needed  
to \emph{find} the proof. For example, the  $\many$-value of (C1a.1a.1) is $90.5$, and the inequality $90.5>83$ is the reason to split the case (C1a.1a.1) into subcases.

\begin{remark}[Notes on the proofs of Lemmas \ref{lemmaonB}--\ref{lemmaonH0}]\label{remarknotsPrf}
First, observe that  $\many(F_0)=83$ is the  largest $\many$-value occurring in Lemma~\ref{lemmakrlat}. Thus, Lemma~\ref{lemmaonF0} devoted to $F_0$ is the most crucial one in the paper. Since even the ``human part'' of its computer-assisted proof is long and threatens with unnoticed human errors, we have elaborated two separate proofs of Lemma~\ref{lemmaonF0}. One of these proofs is optimized in some sense and it has already been given, and it is also available from  the corresponding file \texttt{F0-output.txt}.
The other proof is less optimized and it is described only by its output file called \texttt{F0-alternative-output.txt}.

One might think that, compared to Lemma~\ref{lemmaonF0}, the seven other lemmas of this section are easier simply because while Lemma~\ref{lemmaonF0} is devoted to $F_0$ 
and $\many(F_0)=83$, the other lemmas deal with lattices $X\in\krlist$ with $\many(X)<83$. However, some of these lemmas need even more tedious proofs than Lemma~\ref{lemmaonF0}. Because such amount of straightforward technicalities would not be too exciting for the reader and because of space considerations, these proofs are not given in the concise version of this paper; some of them are  appendices in the extended version of the paper, and all of them are downloadable as output files of our computer program. Assuming that the reader shares our trust in our computer program or he writes another computer program, these files constitute \emph{complete} proofs.  In particular, these files include lots of comments that make them almost as detailed as the proof of Lemma~\ref{lemmaonF0}.
\end{remark}

\begin{remark}[On the lengths of the proofs of Lemmas \ref{lemmaonB}--\ref{lemmaonH0}]\label{remarknotdKzR}
The table below gives the numbers of final $\many$-values that our proofs, that is, the program output files, contain. We have already mentioned that the \ref{lemmaonF0}-labeled column gives 13. 
The $\ast$-labeled column refers to the second proof of Lemma~\ref{lemmaonF0} given in the downloadable file \texttt{F0-alternative-output.txt}.
\begin{equation}
\lower  0.0 cm
\vbox{\tabskip=0pt\offinterlineskip
\halign{\strut#&\vrule#\tabskip=4pt plus 1pt&
#\hfill& \vrule\vrule\vrule#&
\hfill#&\vrule#&
\hfill#&\vrule#&
\hfill#&\vrule#&
\hfill#&\vrule#&
\hfill#&\vrule#&
\hfill#&\vrule#&
\hfill#&\vrule#&
\hfill#&\vrule#&
\hfill#&\vrule\tabskip=0.0pt#&
#\hfill\vrule\vrule\cr
\vonal
&&Lemma&&\ref{lemmaonB}&&\ref{lemmaonC}&&\ref{lemmaonD}&&\ref{lemmaonE0}&&\ref{lemmaonE1}&&\ref{lemmaonF0}&&$\ast$&&\ref{lemmaonG0}&&\ref{lemmaonH0}&\cr
\vonal
&&$X\in\krlist$&&$B$&&$C$&&$D$&&$E_0$&&$E_1$&&$F_0$&&$F_0$&&$G_0$&&$H_0$&\cr
\vonal\vonal\vonal
&&$|\{$final $\many$-values$\}|$&&$11$&&$12$&&${5}$&&$37$&&$5$&&$13$&&$19$&&$24$&&$67$&\cr
\vonal}}
\label{tablazat}
\end{equation}

In order to explain some large numbers in the third row of the table, note the following. 
If $X\in\krlist$ contains no element with more than two covers or more than two lower covers, then the proof of the corresponding lemma is quite similar to that of Lemma~\ref{lemmaonF0}; of course, we can exploit duality only if $X$ itself is selfdual. Note that symmetries with respect to automorphisms can also be exploited. However, if there are elements with more than two lower covers or dually,  like in case of $H_0$, then there can be  cases that we split into three subcases according to Lemma~\ref{lemmacube} as follows. Let $a_0$, $a_1$, and $a_2$ be the three lower covers of an element $b$ in $X\in\krlist$; the case of upper covers is analogous. Let  $c:= a_0\vee  a_1\vee a_2$ in $L$.  (Usually, $b$ is meet-irreducible and we can let $c:=b$.) Then the following three subcases are considered. First, $a_0\vee a_1=c$. Second, $a_0\vee a_1:=x<c$ is a new element and $a_0\vee a_2=c$. Third, $a_0\vee a_1=:x<c$,  $a_0\vee a_2=:y<c$, $x\neq y$ are new elements, and $a_1\vee a_2=c$. It is not surprising now that this three-direction splitting leads to more final $\many$-values than the two-direction splittings in the proof of Lemma~\ref{lemmaonF0}. 
With an opposite effect, there is another factor related to the numbers of final $\many$-values. Namely,  $\many(X)<\many(F_0)=83$ for all   $X\in\krlist\setminus\set{F_0}$ that occur in the lemmas of this section, whereby we do not have to be so efficient for these $X$ as for $F_0$; simply because our lemmas for these $X$ state less than an affirmative answer to Problem~\ref{problemBG}\eqref{problemBGb}. To conclude Remark~\ref{remarknotdKzR}, we mention that there are many ways to prove the eight lemmas with the help of our program, and not much effort has been devoted to reduce the numbers in the third row of Table~\ref{tablazat}; such an effort would have required too much work. Some of these numbers might decrease in the future. 
\end{remark}

Finally, armed with our lemmas and corollaries, we are ready to present the concluding proof of the paper.

\begin{proof}[Proof of Theorem~\ref{thmreform}] For the sake of contradiction, suppose that $L$ is a finite lattice such that $\many(L)>83$. By Lemmas~\ref{lemmaBool8} and   \ref{lemmaonB}--\ref{lemmaonH0} and Corollaries~\ref{corolGnHn}, \ref{corollaryF1}, and  \ref{corolAnEnpFnp}, none of the lattices occurring as excluded subposets in these twelve statements is a subposet of $L$. Using $\many(\dual L)=\many(L)$ and applying these twelve statements to $\dual L$, we obtain that none of the duals of the excluded lattices is a subposet of $L$. Hence, no member of $\krlist$ is a subposet of $L$, and  
Theorem~\ref{thmKellyRival} implies that $L$ is planar, as required. 
\end{proof}

\clearpage
\centerline{\bf{{\LARGE APPENDICES}}}
\normalfont
\bigskip
\def\appendixhead{{Cz\'edli: Eighty-three sublattices / Appendices}}
\markboth\appendixhead\appendixhead

\noindent The rest of the paper is devoted to appendices, which are the output files of our computer program. These files are integral parts of the proof of the main result of the paper; they together with a sample input file are given in the following order; for each of them, we give the name of the appendix, the name of the downloadable output file, and we refer to the relevant statement(s) in the paper.

\begin{itemize}
\item $B$, \texttt{B-output.txt}; see Lemma~\ref{lemmaonB}
\item $C$, \texttt{C-output.txt}; see Lemma~\ref{lemmaonC}
\item $D$, \texttt{D-output.txt}; see Lemma~\ref{lemmaonD}
\item $E_0$, \texttt{E0-output.txt}; see Lemma~\ref{lemmaonE0}
\item $E_1$, \texttt{E1-output.txt}; see Lemma~\ref{lemmaonE1} 
\item Eight-element fence, \texttt{fence8.txt}; see Lemma~\ref{lemma8fence}
\item $F_0$, \texttt{F0-output.txt}; see Lemma~\ref{lemmaonF0}
\item $F_0$-alternative, \texttt{F0-alternative-output.txt}; see Lemma~\ref{lemmaonF0}
\item $F_1$, \texttt{F1-output.txt}; see Lemma~\ref{lemmaencaps2ladder} and Corollary~\ref{corollaryF1}
\item $G_0$, \texttt{G0-output.txt}; see Lemma~\ref{lemmaonG0}
\item $H_0$, \texttt{H0-output.txt}; see Lemma~\ref{lemmaonH0}
\item $K_5$, \texttt{K5-output.txt}; see Lemma~\ref{LemmaF1d}
\item Sample input; \texttt{0-readme-1st-and-sample-input.txt}; see Example~\ref{exampledifficulty}
\item Small Kelly-Rival lattices, \texttt{small-kelly-rival-lat.txt}; see Lemma~\ref{lemmakrlat}
\end{itemize}
The \emph{input files} are approximately of the same sizes as the corresponding output files and their contents are almost the same. In essence, the only difference is that the input files contain some \emph{commands} like ``\verb+\size+'', ``\verb+\constraints+'', etc., but they do not contain the $\many$-values.

\clearpage
\def\appendixhead{{Cz\'edli: Eighty-three sublattices / Appendix: $B$}}
\markboth\appendixhead\appendixhead
\centerline{\bf{{\LARGE Appendix: $B$}}}
\normalfont
\begin{verbatim}
Version of November 2, 2018; reformatted on May 6, 2019
B from Kelly-Rival: "Planar lattices"
Here we prove that B is not a subposet of L. Suppose the contrary.
We can and we will always assume that 
(#0:)  d+e=a e+g=c e+f=b a+c=i  d+c=i a+g=i d+g=i
  (After letting d+g=i, only the ad <-> cg symmetry remains!)
We know that two of a,b,c intersect at e.
**(Case 1) a*c=e. Then with d*g=:o, o<e
****(Subcase 1a)  a*b=e
****(Sub-subcase 1a1) f':=o+f>=e, 
     then o+f=b and f*o=:0 is the bottom
******(Sub-sub-subcase 1a1a)  a*f=0  then  d*f=0 
|A|=10, A(without commas)={iabcdefgo0}. Constraints:
d+e=a e+g=c e+f=b a+c=i  d+c=i a+g=i d+g=i;(#0)
a*c=e  d*g=o;(Case 1)
a*b=e;(1a)
o+f=b  f*o=0;(1a1)
a*f=0    d*f=0;(1a1a)
Result for A=Case1a1a:  |Sub(A)| = 315, that is,
sigma(A) = |Sub(A)|*2^(8-|A|) =  78.7500000000000000 .
    Few subuniverses, (Sub-sub-subcase 1a1a) is excluded

**(Case 1) a*c=e. Then with d*g=:o, o<e
****(Subcase 1a)  a*b=e
****(Sub-subcase 1a1) f':=o+f>=e, 
      then o+f=b and f*o=:0 is the bottom
******(Sub-sub-subcase 1a1b)  a*f=p >0 
|A|=11, A(without commas)={iabcdefgo0p}. Constraints:
d+e=a e+g=c e+f=b a+c=i  d+c=i a+g=i d+g=i;(#0)
a*c=e  d*g=o;(Case 1)
a*b=e;(1a)
o+f=b  f*o=0;(1a1)
a*f=p;(1a1b)
Result for A=Case1a1b:  |Sub(A)| = 634, that is,
sigma(A) = |Sub(A)|*2^(8-|A|) =  79.2500000000000000 .
    Few subuniverses, (Sub-sub-subcase 1a1b) is excluded
    Thus, (Sub-subcase 1a1) is excluded

**(Case 1) a*c=e. Then with d*g=:o, o<e
****(Subcase 1a)  a*b=e
****(Sub-subcase 1a2) f':=o+f\not\geq e, then let f:=f', 
      we still have b=e+f but now o is the bottom
    Let e*f=:p >o, then  a*f=a*b*f=e*f= p 
******(Sub-sub-subcase 1a2a) d+p=a 
|A|=10, A(without commas)={iabcdefgop}. Constraints:
d+e=a e+g=c e+f=b a+c=i  d+c=i a+g=i d+g=i;(#0)
a*c=e  d*g=o;(Case 1)
a*b=e;(1a)
e*f=p  a*f=p;(1a2)
d+p=a;(1a2a)
Result for A=Case1a2a:  |Sub(A)| = 328, that is,
sigma(A) = |Sub(A)|*2^(8-|A|) =  82.0000000000000000 .
    Few subuniverses, (Sub-sub-subcase 1a2a) is excluded

**(Case 1) a*c=e. Then with d*g=:o, o<e
****(Subcase 1a)  a*b=e
****(Sub-subcase 1a2) f':=o+f\not\geq e, then let f:=f', 
      we still have b=e+f but now o is the bottom
    Let e*f=:p >o, then  a*f=a*b*f=e*f= p 
******(Sub-sub-subcase 1a2b) d+p=:q<a, then q+e=a 
|A|=11, A(without commas)={iabcdefgopq}. Constraints:
d+e=a e+g=c e+f=b a+c=i  d+c=i a+g=i d+g=i;(#0)
a*c=e  d*g=o;(Case 1)
a*b=e;(1a)
e*f=p  a*f=p;(1a2)
d+p=q  q+e=a;(1a2b)
Result for A=Case1a2b:  |Sub(A)| = 597, that is,
sigma(A) = |Sub(A)|*2^(8-|A|) =  74.6250000000000000 .
    Few subuniverses, (Sub-sub-subcase 1a2b) is excluded
    Thus, (Sub-subcase 1a2) is excluded

**(Case 1) a*c=e. Then with d*g=:o, o<e
****(Subcase 1a)  a*b=e
****(Sub-subcase 1a3) e*f=o is the bottom. Then a*f=a*b*f=e*f=o
******(Sub-sub-subcase 1a3a)  a*f=p>o, then e*p=o
|A|=10, A(without commas)={iabcdefgop}. Constraints:
d+e=a e+g=c e+f=b a+c=i  d+c=i a+g=i d+g=i;(#0)
a*c=e  d*g=o;(Case 1)
a*b=e;(1a)
 e*f=o  a*f=o;(1a3)
 a*f=p  e*p=o;(1a3a)
Result for A=Case1a3a:  |Sub(A)| = 322, that is,
sigma(A) = |Sub(A)|*2^(8-|A|) =  80.5000000000000000 .
    Few subuniverses, (Sub-sub-subcase 1a3a) is excluded

**(Case 1) a*c=e. Then with d*g=:o, o<e
****(Subcase 1a)  a*b=e
****(Sub-subcase 1a3) e*f=o is the bottom. Then a*f=a*b*f=e*f=o
******(Sub-sub-subcase 1a3b)  a*f=o, then d*f=o
******(Sub-sub-sub-subcase 1a3b1)  f*c=o, then f*g=o
|A|=9, A(without commas)={iabcdefgo}. Constraints:
d+e=a e+g=c e+f=b a+c=i  d+c=i a+g=i d+g=i;(#0)
a*c=e  d*g=o;(Case 1)
a*b=e;(1a)
 e*f=o  a*f=o;(1a3)
 a*f=o   d*f=o;(1a3b)
 f*c=o   f*g=o;(1a3b1)
Result for A=Case1a3b1:  |Sub(A)| = 163, that is,
sigma(A) = |Sub(A)|*2^(8-|A|) =  81.5000000000000000 .
    Few subuniverses, Sub-sub-sub-subcase (1a3b1)  is excluded

**(Case 1) a*c=e. Then with d*g=:o, o<e
****(Subcase 1a)  a*b=e
****(Sub-subcase 1a3) e*f=o is the bottom. Then a*f=a*b*f=e*f= o
******(Sub-sub-subcase 1a3b)  a*f=o, then d*f=o
******(Sub-sub-sub-subcase 1a3b2)  f*c=:p>o, then e*p=o
|A|=10, A(without commas)={iabcdefgop}. Constraints:
d+e=a e+g=c e+f=b a+c=i  d+c=i a+g=i d+g=i;(#0)
a*c=e  d*g=o;(Case 1)
a*b=e;(1a)
 e*f=o  a*f=o;(1a3)
 a*f=o   d*f=o;(1a3b)
 f*c=p   e*p=o;(1a3b2)
Result for A=Case1a3b2:  |Sub(A)| = 296, that is,
sigma(A) = |Sub(A)|*2^(8-|A|) =  74.0000000000000000 .
    Few subuniverses, (Sub-sub-sub-subcase 1a3b2) is excluded
    Thus, (Sub-sub-subcase 1a3b) is excluded. 
    Thus, (Sub-subcase 1a3) is excluded. 
    Thus, (Subcase 1a) is excluded.

**(Case 1) a*c=e. Then with d*g=:o, o<e
****(Subcase 1b)  a*b=:u>e, then u*c=e
******(Sub-subcase 1b1) u+c=i, then b+c=i
|A|=10, A(without commas)={iabcdefgou}. Constraints:
d+e=a e+g=c e+f=b a+c=i  d+c=i a+g=i d+g=i;(#0)
a*c=e  d*g=o;(Case 1)
a*b=u  u*c=e;(1b)
u+c=i   b+c=i;(1b1)
Result for A=Case1b1:  |Sub(A)| = 318, that is,
sigma(A) = |Sub(A)|*2^(8-|A|) =  79.5000000000000000 .
    Few subuniverses, (Sub-subcase 1b1) is excluded

**(Case 1) a*c=e. Then with d*g=:o, o<e
****(Subcase 1b)  a*b=:u>e, then u*c=e
******(Sub-subcase 1b2) u+c=:v<i, then a+v=i
|A|=11, A(without commas)={iabcdefgouv}. Constraints:
d+e=a e+g=c e+f=b a+c=i  d+c=i a+g=i d+g=i;(#0)
a*c=e  d*g=o;(Case 1)
a*b=u  u*c=e;(1b)
 u+c=v   a+v=i;(1b2)
Result for A=Case1b2:  |Sub(A)| = 607, that is,
sigma(A) = |Sub(A)|*2^(8-|A|) =  75.8750000000000000 .
    Few subuniverses, (Sub-subcase 1b2) is excluded
    Thus, (Subcase 1b) is excluded
    Thus, (Case 1) is excluded

We still always assume that 
(#0:)  d+e=a e+g=c e+f=b a+c=i  d+c=i a+g=i d+g=i
  (After letting d+g=i, only the ad <-> cg symmetry remains!)
We know that two of a,b,c intersect at e. By ad <-> cg symmetry
the only remaining case is a*b=e. 
**(Case 2) a*b=e.  Then a*c=e is excluded by Subcase 1a, whence
a*c=:v>i.  It follows that v*b=e. With d*f=:o, o=d*a*b*f=o*e<=e
******(Subcase 2a)  v+b=:q<i 
|A|=11, A(without commas)={iabcdefgovq}. Constraints:
d+e=a e+g=c e+f=b a+c=i  d+c=i a+g=i d+g=i;(#0)
a*b=e  a*c=v   v*b=e  d*f=o;(Case2)
v+b=q;(Subcase 2a)
Result for A=Subcase2a:  |Sub(A)| = 618, that is,
sigma(A) = |Sub(A)|*2^(8-|A|) =  77.2500000000000000 .
    Few subuniverses, (Subcase 2a) is excluded.

(#0:)  d+e=a e+g=c e+f=b a+c=i  d+c=i a+g=i d+g=i
  (After letting d+g=i, only the ad <-> cg symmetry remains!)
We know that two of a,b,c intersect at e. By ad <-> cg symmetry
the only remaining case is a*b=e. 
**(Case 2) a*b=e. Then a*c=e is excluded by Subcase 1a, whence
a*c=:v>i.  It follows that v*b=e. With d*f=:o, o=d*a*b*f=o*e<=e
******(Subcase 2b)  v+b=i 
|A|=10, A(without commas)={iabcdefgov}. Constraints:
d+e=a e+g=c e+f=b a+c=i  d+c=i a+g=i d+g=i;(#0)
a*b=e  a*c=v   v*b=e  d*f=o;(Case2)
v+b=i;(Subcase 2b)
Result for A=Subcase2a:  |Sub(A)| = 316, that is,
sigma(A) = |Sub(A)|*2^(8-|A|) =  79.0000000000000000 .
    Few subuniverses, (Subcase 2b) is excluded.
    Thus, (Case 2) is excluded.
    So both possible cases are excluded.
    Therefore, B cannot be a subposet of L, as required. Q.e.d.

The computation took 78/1000 seconds.
\end{verbatim}
\normalfont

\clearpage
\def\appendixhead{{Cz\'edli: Eighty-three sublattices / Appendix: $C$}}
\markboth\appendixhead\appendixhead
\centerline{\bf{{\LARGE Appendix: $C$}}}
\normalfont
\begin{verbatim}
Version of November 4, 2018; reformatted on May 6, 2019
C from Kelly-Rival: "Planar lattices"
Here we prove that C is not a subposet of L. Suppose the contrary.
We can and we will always assume that 
(#0:) a*b=d, b*c=e, d*e=u, a*c=u, a*e=u, d*c=u, u*f=o, d*f=o
 Since we assumed that d*f=o, there is no symmetry for the rest.
Two of d,e,f give their join; 
      assume this join is b (otherwise modify b)
There will be three cases depending on which two of d,e,f give b.
**(Case 1) d+e=b, then a+c=a+d+e+c=a+b+c=i, the top element.
****(Subcase 1a) a+b=i, then a+e=a+d+e=i
******(Sub-subcase 1a1) b+c=i, then d+c= d+e+c=b+c=i 
********(Sub-sub-subcase 1a1a) d+f=b, then a+f=a+d+f=a+b=i 
|A|=9, A(without commas)={iabcdefuo}. Constraints:
a*b=d  b*c=e  d*e=u  a*c=u  a*e=u  d*c=u  u*f=o  d*f=o; (#0)
d+e=b  a+c=i; (Case 1)
 a+b=i  a+e=i; (Subcase 1a)
b+c=i  d+c=i;(Sub-subcase 1a1)
d+f=b  a+f=i; (Sub-sub-subcase 1a1a)
Result for A=Case1a1a:  |Sub(A)| = 164, that is,
sigma(A) = |Sub(A)|*2^(8-|A|) =  82.0000000000000000 .
   Few subuniverses, (Sub-sub-subcase 1a1a) is excluded.

(#0:) a*b=d, b*c=e, d*e=u, a*c=u, a*e=u, d*c=u, u*f=o, d*f=o
Two of d,e,f gives their join; 
       assume it is b (otherwise modify b)
**(Case 1) d+e=b, then a+c=a+d+e+c=a+b+c=i, the top element.
****(Subcase 1a) a+b=i, then a+e=a+d+e=i
******(Sub-subcase 1a1) b+c=i, then d+c= d+e+c=b+c=i 
********(Sub-sub-subcase 1a1b) d+f=:x<b, then x+e=b
|A|=10, A(without commas)={iabcdefuox}. Constraints:
a*b=d  b*c=e  d*e=u  a*c=u  a*e=u  d*c=u  u*f=o  d*f=o; (#0)
d+e=b  a+c=i; (Case 1)
 a+b=i  a+e=i; (Subcase 1a)
b+c=i  d+c=i;(Sub-subcase 1a1)
d+f=x  x+e=b; (Sub-sub-subcase 1a1b)
Result for A=Case1a1b:  |Sub(A)| = 299, that is,
sigma(A) = |Sub(A)|*2^(8-|A|) =  74.7500000000000000 .
   Few subuniverses, (Sub-sub-subcase 1a1b) is excluded.
   Thus, (Sub-subcase 1a1) is excluded.

(#0:) a*b=d, b*c=e, d*e=u, a*c=u, a*e=u, d*c=u, u*f=o, d*f=o
Two of d,e,f gives their join; 
       assume that is b (otherwise modify b)
**(Case 1) d+e=b, then a+c=a+d+e+c=a+b+c=i, the top element.
****(Subcase 1a) a+b=i, then a+e=a+d+e=i
******(Sub-subcase 1a2) b+c=:p<i, then a+p=i
********(Sub-sub-subcase 1a2a) d+f=b, then a+f=a+d+f=a+b=i
|A|=10, A(without commas)={iabcdefuop}. Constraints:
a*b=d  b*c=e  d*e=u  a*c=u  a*e=u  d*c=u  u*f=o  d*f=o; (#0)
d+e=b  a+c=i; (Case 1)
 a+b=i  a+e=i; (Subcase 1a)
 b+c=p  a+p=i; (Sub-subcase 1a2)
 d+f=b  a+f=i; (Sub-sub-subcase 1a2a)
Result for A=Case1a2a:  |Sub(A)| = 321, that is,
sigma(A) = |Sub(A)|*2^(8-|A|) =  80.2500000000000000 .
   Few subuniverses, (Sub-sub-subcase 1a2a) is excluded.

(#0:) a*b=d, b*c=e, d*e=u, a*c=u, a*e=u, d*c=u, u*f=o, d*f=o
Two of d,e,f gives their join; 
       assume it is b (otherwise modify b)
**(Case 1) d+e=b, then a+c=a+d+e+c=a+b+c=i, the top element.
****(Subcase 1a) a+b=i, then a+e=a+d+e=i
******(Sub-subcase 1a2) b+c=:p<i, then a+p=i
********(Sub-sub-subcase 1a2b) d+f=x<b, then x+e=d
|A|=11, A(without commas)={iabcdefuopx}. Constraints:
a*b=d  b*c=e  d*e=u  a*c=u  a*e=u  d*c=u  u*f=o  d*f=o; (#0)
d+e=b  a+c=i; (Case 1)
 a+b=i  a+e=i; (Subcase 1a)
 b+c=p  a+p=i; (Sub-subcase 1a2)
 d+f=x  x+e=d; (Sub-sub-subcase 1a2b)
Result for A=Case1a2b:  |Sub(A)| = 537, that is,
sigma(A) = |Sub(A)|*2^(8-|A|) =  67.1250000000000000 .
   Few subuniverses, (Sub-sub-subcase 1a2b) is excluded.
   Thus, (Sub-subcase 1a2)  is excluded.
   Thus, (Subcase 1a) is excluded.           

(#0:) a*b=d, b*c=e, d*e=u, a*c=u, a*e=u, d*c=u, u*f=o, d*f=o
Two of d,e,f gives their join; 
       assume it is b (otherwise modify b)
**(Case 1) d+e=b, then a+c=a+d+e+c=a+b+c=i, the top element.
****(Subcase 1b)a+b=r <i, then r+c=i and a+e=a+d+e=a+b=r
******(Sub-subcase 1b1) b+c=:p<i, then a+p=i and r+p=a+b+b+c=i
|A|=11, A(without commas)={iabcdefuorp}. Constraints:
a*b=d  b*c=e  d*e=u  a*c=u  a*e=u  d*c=u  u*f=o  d*f=o; (#0)
d+e=b  a+c=i; (Case 1)
a+b=r  r+c=i  a+e=r;(Subcase 1b)
 b+c=p  a+p=i  r+p=i; (Sub-subcase 1b1)
Result for A=Case1b1:  |Sub(A)| = 580, that is,
sigma(A) = |Sub(A)|*2^(8-|A|) =  72.5000000000000000 .
   Few subuniverses, (Sub-subcase 1b1) is excluded.

(#0:) a*b=d, b*c=e, d*e=u, a*c=u, a*e=u, d*c=u, u*f=o, d*f=o
Two of d,e,f gives their join; 
       assume it is b (otherwise modify b)
**(Case 1) d+e=b, then a+c=a+d+e+c=a+b+c=i, the top element.
****(Subcase 1b) a+b=r <i, then r+c=i and a+e=a+d+e=a+b=r
******(Sub-subcase 1b2) b+c=i, then  d+c=d+e+c=b+c=i
********(Sub-sub-subcase 1b2a) d+f=:x<b, then x+e=d
|A|=11, A(without commas)={iabcdefuorx}. Constraints:
a*b=d  b*c=e  d*e=u  a*c=u  a*e=u  d*c=u  u*f=o  d*f=o; (#0)
d+e=b  a+c=i; (Case 1)
 a+b=r  r+c=i  a+e=r;(Subcase 1b)
 b+c=i  d+c=i;(Sub-subcase 1b2)
 d+f=x  x+e=d; (Sub-sub-subcase 1b2a)
Result for A=Case1b2a:  |Sub(A)| = 527, that is,
sigma(A) = |Sub(A)|*2^(8-|A|) =  65.8750000000000000 .
   Few subuniverses, (Sub-subcase 1b2a) is excluded.

(#0:) a*b=d, b*c=e, d*e=u, a*c=u, a*e=u, d*c=u, u*f=o, d*f=o
Two of d,e,f gives their join; 
       assume it is b (otherwise modify b)
**(Case 1) d+e=b, then a+c=a+d+e+c=a+b+c=i, the top element.
****(Subcase 1b) a+b=r <i, then r+c=i and a+e=a+d+e=a+b=r
******(Sub-subcase 1b2) b+c=i, then  d+c=d+e+c=b+c=i
********(Sub-sub-subcase 1b2b) d+f=b, then a+f=a+d+f=a+b=r
|A|=10, A(without commas)={iabcdefuor}. Constraints:
a*b=d  b*c=e  d*e=u  a*c=u  a*e=u  d*c=u  u*f=o  d*f=o; (#0)
d+e=b  a+c=i; (Case 1)
 a+b=r  r+c=i  a+e=r;(Subcase 1b)
 b+c=i  d+c=i;(Sub-subcase 1b2)
 d+f=b  a+f=r;(Sub-sub-subcase 1b2b)
Result for A=Case1b2b:  |Sub(A)| = 307, that is,
sigma(A) = |Sub(A)|*2^(8-|A|) =  76.7500000000000000 .
   Few subuniverses, (Sub-subcase 1b2b) is excluded.
   Thus, (Sub-subcase 1b2) is excluded.         
   Thus, (Subcase 1b) is excluded.                  
   Thus, (Case 1) is excluded.        

(#0:) a*b=d, b*c=e, d*e=u, a*c=u, a*e=u, d*c=u, u*f=o, d*f=o
Two of d,e,f gives their join; 
       we assume it is b (otherwise modify b)
**(Case 2) d+f=b. Then  d+e=:x <b, and so x+f=b, 
           since (Case 1) has been excluded.
****(Subcase 2a) a+b=i, then a+f=a+d+f=a+b=i 
********(Sub-subcase 2a1) x*f=o  
|A|=11, A(without commas)={iabcdefoupx}. Constraints:
a*b=d  b*c=e  d*e=u  a*c=u  a*e=u  d*c=u  u*f=o  d*f=o; (#0)
d+f=b  d+e=x  x+f=b; (Case 2)
a+b=i  a+f=i;(Subcase 2a)
x*f=o;(Sub-subcase 2a1)
Result for A=Case2a1:  |Sub(A)| = 660, that is,
sigma(A) = |Sub(A)|*2^(8-|A|) =  82.5000000000000000 .
   Few subuniverses; (Sub-subcase 2a1) is excluded.

(#0:) a*b=d, b*c=e, d*e=u, a*c=u, a*e=u, d*c=u, u*f=o, d*f=o
Two of d,e,f gives their join; 
       we assume it is b (otherwise modify b)
**(Case 2) d+f=b. Then  d+e=:x <b, and so x+f=b, 
                  since (Case 1) has been excluded.
****(Subcase 2a) a+b=i, then a+f=a+d+f=a+b=i 
********(Sub-subcase 2a2) x*f=:y>o  
|A|=11, A(without commas)={iabcdefouxy}. Constraints:
a*b=d  b*c=e  d*e=u  a*c=u  a*e=u  d*c=u  u*f=o  d*f=o; (#0)
d+f=b  d+e=x  x+f=b; (Case 2)
a+b=i  a+f=i;(Subcase 2a)
 x*f=y;(Sub-subcase 2a2)
Result for A=Case2a2:  |Sub(A)| = 642, that is,
sigma(A) = |Sub(A)|*2^(8-|A|) =  80.2500000000000000 .
   Few subuniverses; (Sub-subcase 2a2) is excluded.
   Thus, (Subcase 2a) is excluded.   

(#0:) a*b=d, b*c=e, d*e=u, a*c=u, a*e=u, d*c=u, u*f=o, d*f=o
Two of d,e,f gives their join; 
       we assume it is b (otherwise modify b)
**(Case 2) d+f=b. Then  d+e=:x <b, and so x+f=b, 
                  since (Case 1) has been excluded.
****(Subcase 2b) a+b=:r< i, then a+f=a+d+f=a+b=r
********(Sub-subcase 2b1) x*f=:y>o  
|A|=12, A(without commas)={iabcdefourxy}. Constraints:
a*b=d  b*c=e  d*e=u  a*c=u  a*e=u  d*c=u  u*f=o  d*f=o; (#0)
d+f=b  d+e=x  x+f=b; (Case 2)
 a+b=r  a+f=r;(Subcase 2b)
 x*f=y;(Sub-subcase 2b1)
Result for A=Case2b1:  |Sub(A)| = 1284, that is,
sigma(A) = |Sub(A)|*2^(8-|A|) =  80.2500000000000000 .
   Few subuniverses; (Sub-subcase 2b1) is excluded.

(#0:) a*b=d, b*c=e, d*e=u, a*c=u, a*e=u, d*c=u, u*f=o, d*f=o
Two of d,e,f gives their join; 
       we assume it is b (otherwise modify b)
**(Case 2) d+f=b. Then  d+e=:x <b, and so 
     x+f=b, since (Case 1) has been excluded.
****(Subcase 2b) a+b=:r< i, then a+f=a+d+f=a+b=r
********(Sub-subcase 2b2) x*f=o  
|A|=11, A(without commas)={iabcdefourx}. Constraints:
a*b=d  b*c=e  d*e=u  a*c=u  a*e=u  d*c=u  u*f=o  d*f=o; (#0)
d+f=b  d+e=x  x+f=b; (Case 2)
 a+b=r  a+f=r;(Subcase 2b)
 x*f=o;(Sub-subcase 2b2)
Result for A=Case2b2:  |Sub(A)| = 660, that is,
sigma(A) = |Sub(A)|*2^(8-|A|) =  82.5000000000000000 .
   Few subuniverses; (Sub-subcase 2b2) is excluded.
   Thus, (Subcase 2b) is excluded.
   Thus, (Case 2) is excluded.

(#0:) a*b=d, b*c=e, d*e=u, a*c=u, a*e=u, d*c=u, u*f=o, d*f=o
Two of d,e,f gives their join; 
       we assume it is b (otherwise modify b)
**(Case 3) e+f=b. Then, since (Case 1), (Case 2) are excluded, 
d+e=:x <b and d+f=:z<b, and so x+f=b, e+z=b, and x+z=b
|A|=12, A(without commas)={iabcdefouxyz}. Constraints:
a*b=d  b*c=e  d*e=u  a*c=u  a*e=u  d*c=u  u*f=o  d*f=o; (#0)
e+f=b  d+e=x d+f=z  x+f=b e+z=b  x+z=b;(Case 3)
Result for A=Case3:  |Sub(A)| = 1120, that is,
sigma(A) = |Sub(A)|*2^(8-|A|) =  70.0000000000000000 .
   Few subuniverses; (Case 3) is excluded.
   There is no more case;        
   this proves that C cannot be a subposet of L.

The computation took 125/1000 seconds.

\end{verbatim}
\normalfont

\clearpage
\def\appendixhead{{Cz\'edli: Eighty-three sublattices / Appendix: $D$}}
\markboth\appendixhead\appendixhead

\centerline{\bf{{\LARGE Appendix: $D$}}}
\normalfont
\begin{verbatim}
Version of November 4, 2018; reformatted on May 6, 2019
D from Kelly-Rival: "Planar lattices"
Here we prove that D is not a subposet of L. Suppose the contrary.
We can and we will always assume that 
(#0:) d+c=a, d+g=a, c+e=b, g+e=b, f+g=c, 
      a+b=i, d+e=i, d+b=i, a+e=i, f*g=o
 We have an ab-be symmetry 
    and we can also assume that d*c*e=g, whence
 there are two cases to obtain f from the antichain d,c,e

**(Case 1) d*e=f
****(Subcase 1a) d*c=f 
******(Sub-subcase 1a1) c*e=f
******(Sub-sub-subcase 1a1a) a*b=c
         Remark: we do not use, say, a*e=a*b*e=c*e=f
|A|=9, A(without commas)={oiabcdefg}. Constraints:
d+c=a  d+g=a  c+e=b  g+e=b  f+g=c
       a+b=i  d+e=i  d+b=i  a+e=i  f*g=o;(#0)
 d*e=f;(Case 1)
 d*c=f;(Subcase 1a)
 c*e=f;(Sub-subcase 1a1)
 a*b=c;(Sub-sub-subcase 1a1a)
Result for A=Case1a1a:  |Sub(A)| = 160, that is,
sigma(A) = |Sub(A)|*2^(8-|A|) =  80.0000000000000000 .
 Few subuniverses, (Sub-sub-subcase 1a1a) is excluded.

**(Case 1) d*e=f
****(Subcase 1a) d*c=f 
******(Sub-subcase 1a1) c*e=f 
******(Sub-sub-subcase 1a1b) a*b=:x>c
|A|=10, A(without commas)={oiabcdefgx}. Constraints:
d+c=a  d+g=a  c+e=b  g+e=b  f+g=c
       a+b=i  d+e=i  d+b=i  a+e=i  f*g=o;(#0)
 d*e=f;(Case 1)
 d*c=f;(Subcase 1a)
 c*e=f;(Sub-subcase 1a1)
 a*b=x;(Sub-sub-subcase 1a1b)
Result for A=Case1a1b:  |Sub(A)| = 324, that is,
sigma(A) = |Sub(A)|*2^(8-|A|) =  81.0000000000000000 .
 Few subuniverses, (Sub-sub-subcase 1a1b) is excluded.
 Thus, (Sub-subcase 1a1) is excluded.

**(Case 1) d*e=f
****(Subcase 1a) d*c=f 
******(Sub-subcase 1a2) c*e=:y>f, then  d*y=f and y+g=c
|A|=10, A(without commas)={oiabcdefgy}. Constraints:
d+c=a  d+g=a  c+e=b  g+e=b  f+g=c
       a+b=i  d+e=i  d+b=i  a+e=i  f*g=o;(#0)
 d*e=f;(Case 1)
 d*c=f;(Subcase 1a)
 c*e=y     d*y=f  y+g=c;(Sub-subcase 1a2)
Result for A=Case1a2:  |Sub(A)| = 322, that is,
sigma(A) = |Sub(A)|*2^(8-|A|) =  80.5000000000000000 .
 Few subuniverses, (Sub-subcase 1a2) is excluded.
 Thus, (Subcase 1a) is excluded.

**(Case 1) d*e=f
****(Subcase 1b) d*c=:z>f, then z*e=f and z+g=c
    Since (Subcase 1a) is excluded, so is its mirror image, c*e=f
    If c*e=:y is comparable to c*d=z, 
    then f=d*e>=y or z is a contradiction.
    Hence, we also obtain that y and z is incomparable, and 
    c*e=y,  y*z=f, d*y=f, z*e=f, y+g=c
|A|=11, A(without commas)={oiabcdefgyz}. Constraints:
d+c=a  d+g=a  c+e=b  g+e=b  f+g=c
       a+b=i  d+e=i  d+b=i  a+e=i  f*g=o;(#0)
 d*e=f;(Case 1)
 d*c=z   z*e=f  z+g=c;(Subcase 1b)
 c*e=y   y*z=f  d*y=f  z*e=f  y+g=c;still from (Subcase 1b)
Result for A=Case1b:  |Sub(A)| = 531, that is,
sigma(A) = |Sub(A)|*2^(8-|A|) =  66.3750000000000000 .
 Few subuniverses, (Subcase 1b) is excluded.  
 Thus, (Case 1) is excluded.

**(Case 2) d*c=f, then d*g=d*c*g=f*g=o and 
           (since Case 1 is included) d*e=:u>f
and so u*g=u*d*g=u*o=o, u*c=u*d*c=u*f=f 
|A|=10, A(without commas)={oiabcdefgu}. Constraints:
d+c=a  d+g=a  c+e=b  g+e=b  f+g=c
       a+b=i  d+e=i  d+b=i  a+e=i  f*g=o;(#0)
 d*c=f  d*g=o d*e=u u*g=o u*c=f;(Case 2)
Result for A=Case2:  |Sub(A)| = 326, that is,
sigma(A) = |Sub(A)|*2^(8-|A|) =  81.5000000000000000 .
 Few subuniverses, (2) is excluded.
 Therefore,  D cannot be a subposet of L, as required.

The computation took 47/1000 seconds.
\end{verbatim}
\normalfont

\clearpage
\def\appendixhead{{Cz\'edli: Eighty-three sublattices / Appendix: $E_0$}}
\markboth\appendixhead\appendixhead

\centerline{\bf{{\LARGE Appendix: $E_0$}}}
\normalfont
\begin{verbatim}
Version of November 1, 2018; reformatted on May 6, 2019
E_0 from Kelly-Rival: "Planar lattices"
Here we prove that E_0 is not a subposet of L. 
Suppose the contrary.
We can and we will always assume that 
(#0:) a*d=e, c*d=f, e+f=d, d+g=b 
e*f=:u (we do not know yet if u is the bottom element=o), a*c=u, 
e*c=u, a*f=u, e+g=b (:#0)
~~Case (1): u is not o. (Observe that u is parallel to g !)
u*g=o (C1), and at least one of e*g, f*g is o. 
~~~~Subcase (1a): both  e*g=o and f*g=o
~~~~~~Sub-subcase (1a1) a+b=:i a+c=i b+c=i      
|A|=10, A(without commas)={oiabcdefgu}. Constraints:
a*d=e c*d=f e+f=d d+g=b e*f=u  a*c=u   e*c=u a*f=u  e+g=b;(#0)
u*g=o;(C1)
e*g=o  f*g=o;(C1a)
a+b=i a+c=i b+c=i;(C1a1)
Result for A=Case1a1:  |Sub(A)| = 298, that is,
sigma(A) = |Sub(A)|*2^(8-|A|) =  74.5000000000000000 .
   Few subuniverses, Sub-subcase (1a1) above is excluded.

~~Case (1): u is not o and u is parallel to g, whence 
u*g=o (C1), and at least one of e*g, f*g is o. 
~~~~Subcase (1a): both  e*g=o and f*g=o
~~~~~~Sub-subcase (1a2) a+b=i, a+c=i, b+c=:v<i. (Then a+v=i)   
|A|=11, A(without commas)={oiabcdefguv}. Constraints:
a*d=e c*d=f e+f=d d+g=b e*f=u  a*c=u   e*c=u a*f=u  e+g=b;(#0)
u*g=o;(C1)
e*g=o  f*g=o;(C1a)
a+b=i  a+c=i  b+c=v  a+v=i;(C1a2)
Result for A=Case1a2:  |Sub(A)| = 564, that is,
sigma(A) = |Sub(A)|*2^(8-|A|) =  70.5000000000000000 .
   Few subuniverses, Sub-subcase (1a2) above is excluded.

~~Case (1): u is not o and u is parallel to g, whence 
u*g=o (C1), and at least one of e*g, f*g is o. 
~~~~Subcase (1a): both  e*g=o and f*g=o
~~~~~~Sub-subcase (1a3) a+b=i b+c=i, a+c=:v<i. (Then b+v=i)   
|A|=11, A(without commas)={oiabcdefguv}. Constraints:
a*d=e c*d=f e+f=d d+g=b e*f=u  a*c=u   e*c=u a*f=u  e+g=b;(#0)
u*g=o;(C1)
e*g=o  f*g=o;(C1a)
a+b=i b+c=i  a+c=v  b+v=i;(C1a3)
Result for A=Case1a3:  |Sub(A)| = 543, that is,
sigma(A) = |Sub(A)|*2^(8-|A|) =  67.8750000000000000 .
   Few subuniverses, Sub-subcase (1a3) above is excluded.

~~Case (1): u is not o and u is parallel to g, whence 
u*g=o (C1), and at least one of e*g, f*g is o. 
~~~~Subcase (1a): both  e*g=o and f*g=o
~~~~~~Sub-subcase (1a4) a+c=i, b+c=i, a+b=:v. (Then c+v=i)   
|A|=11, A(without commas)={oiabcdefguv}. Constraints:
a*d=e c*d=f e+f=d d+g=b e*f=u  a*c=u   e*c=u a*f=u  e+g=b;(#0)
u*g=o;(C1)
e*g=o  f*g=o;(C1a)
a+c=i b+c=i a+b=v  c+v=i;(C1a4)
Result for A=Case1a4:  |Sub(A)| = 563, that is,
sigma(A) = |Sub(A)|*2^(8-|A|) =  70.3750000000000000 .
   Few subuniverses, Sub-subcase (1a4) above is excluded.

~~Case (1): u is not o and u is parallel to g, whence 
u*g=o (C1), and at least one of e*g, f*g is o. 
~~~~Subcase (1a): both  e*g=o and f*g=o
~~~~~~Sub-subcase (1a5) a+b=:v, b+c=:w, a+c=i.
                        (Then v+w=i, a+w=i, c+v=i)   
|A|=12, A(without commas)={oiabcdefguvw}. Constraints:
a*d=e c*d=f e+f=d d+g=b e*f=u  a*c=u   e*c=u a*f=u  e+g=b;(#0)
u*g=o;(C1)
e*g=o  f*g=o;(C1a)
a+b=v  b+c=w a+c=i  v+w=i a+w=i c+v=i;(C1a5)
Result for A=Case1a5:  |Sub(A)| = 952, that is,
sigma(A) = |Sub(A)|*2^(8-|A|) =  59.5000000000000000 .
   Few subuniverses, Sub-subcase (1a5) above is excluded.

~~Case (1): u is not o and u is parallel to g, whence 
u*g=o (C1), and at least one of e*g, f*g is o. 
~~~~Subcase (1a): both  e*g=o and f*g=o
~~~~~~Sub-subcase (1a6) a+c=:v, b+c=:w, a+b=i.
                        (Then v+w=i, a+w=i, b+v=i)   
|A|=12, A(without commas)={oiabcdefguvw}. Constraints:
a*d=e c*d=f e+f=d d+g=b e*f=u  a*c=u   e*c=u a*f=u  e+g=b;(#0)
u*g=o;(C1)
e*g=o  f*g=o;(C1a)
a+c=v  b+c=w a+b=i  v+w=i a+w=i b+v=i;(C1a6)
Result for A=Case1a6:  |Sub(A)| = 947, that is,
sigma(A) = |Sub(A)|*2^(8-|A|) =  59.1875000000000000 .
   Few subuniverses, Sub-subcase (1a6) above is excluded.

~~Case (1): u is not o and u is parallel to g, whence 
u*g=o (C1), and at least one of e*g, f*g is o. 
~~~~Subcase (1a): both  e*g=o and f*g=o
~~~~~~Sub-subcase (1a7) a+b=:v, a+c=:w, b+c=i.
          (Then v+w=i, b+w=i, c+v=i)   
|A|=12, A(without commas)={oiabcdefguvw}. Constraints:
a*d=e c*d=f e+f=d d+g=b e*f=u  a*c=u   e*c=u a*f=u  e+g=b;(#0)
u*g=o;(C1)
e*g=o  f*g=o;(C1a)
a+b=v  a+c=w b+c=i  v+w=i b+w=i c+v=i;(C1a7)
Result for A=Case1a7:  |Sub(A)| = 946, that is,
sigma(A) = |Sub(A)|*2^(8-|A|) =  59.1250000000000000 .
   Few subuniverses, Sub-subcase (1a7) above is excluded.
   ! Subcase (1a) is excluded!

~~Case (1): u is not o and u is parallel to g, whence 
u*g=o (C1), and at least one of e*g, f*g is o. 
~~~~Subcase (1b): only e*g=o but f*g=:t>o. 
       Then u*t=o, u*g=o, e*t=o
|A|=11, A(without commas)={oiabcdefgut}. Constraints:
a*d=e c*d=f e+f=d d+g=b e*f=u  a*c=u   e*c=u a*f=u  e+g=b;(#0)
u*g=o;(C1)
e*g=o  f*g=t  u*t=o u*g=o e*t=o;(1b)
Result for A=Case1b:  |Sub(A)| = 612, that is,
sigma(A) = |Sub(A)|*2^(8-|A|) =  76.5000000000000000 .
   Few subuniverses, Subcase (1b) above is excluded.

~~Case (1): u is not o and u is parallel to g, whence 
u*g=o (C1), and at least one of e*g, f*g is o. 
~~~~Subcase (1c): only f*g=o but e*g=:t>o. Then u*g=o u*t=o f*t=o
|A|=11, A(without commas)={oiabcdefgut}. Constraints:
a*d=e c*d=f e+f=d d+g=b e*f=u  a*c=u   e*c=u a*f=u  e+g=b;(#0)
u*g=o;(C1)
f*g=o   e*g=t  u*g=o u*t=o  f*t=o;(1c)
Result for A=Case1c:  |Sub(A)| = 620, that is,
sigma(A) = |Sub(A)|*2^(8-|A|) =  77.5000000000000000 .
   Few subuniverses, Subcase (1c) above is excluded.
   Case (1) is excluded! 

Remark: since u>=g is impossible, and u<g would mean u=o, and
         u parallel g has been excluded, we get that u=o.
~~Case (2): e*g=u and f*g=u.
           (So u<g, u is the bottom, and still e*f=u)
~~~~~~Subcase (2a) a+b=:i a+c=i b+c=i      
|A|=10, A(without commas)={oiabcdefgu}. Constraints:
a*d=e c*d=f e+f=d d+g=b e*f=u  a*c=u   e*c=u a*f=u  e+g=b;(#0)
e*g=u  f*g=u;(Case2)
 a+b=i a+c=i b+c=i;(2a)
Result for A=Case2a:  |Sub(A)| = 324, that is,
sigma(A) = |Sub(A)|*2^(8-|A|) =  81.0000000000000000 .
   Few subuniverses, Subcase (2a) above is excluded.

~~Case (2): e*g=u and f*g=u.
           (So u<g, u is the bottom, 
            and still e*f=u)
~~~~~~Subcase (2b) a+b=:i a+c=i b+c=:x<i      
|A|=11, A(without commas)={oiabcdefgux}. Constraints:
a*d=e c*d=f e+f=d d+g=b e*f=u  a*c=u   e*c=u a*f=u  e+g=b;(#0)
e*g=u  f*g=u;(Case2)
a+b=i a+c=i b+c=x;(2b)
Result for A=Case2b:  |Sub(A)| = 632, that is,
sigma(A) = |Sub(A)|*2^(8-|A|) =  79.0000000000000000 .
  Few subuniverses, Subcase (2b) above is excluded.

~~Case (2): e*g=u and f*g=u. (So u<g, u is the bottom, 
            and still e*f=u)
~~~~~~Subcase (2c) a+b=:i a+c=x<i  b+c=i      
|A|=11, A(without commas)={oiabcdefgux}. Constraints:
a*d=e c*d=f e+f=d d+g=b e*f=u  a*c=u   e*c=u a*f=u  e+g=b;(#0)
e*g=u  f*g=u;(Case2)
a+b=i a+c=x  b+c=i;(2c)
Result for A=Case2c:  |Sub(A)| = 632, that is,
sigma(A) = |Sub(A)|*2^(8-|A|) =  79.0000000000000000 .
  Few subuniverses, Subcase (2c) above is excluded.

~~Case (2): e*g=u and f*g=u. (So u<g, u is the bottom, 
            and still e*f=u)
~~~~~~Subcase (2d) a+b=:x<i  a+c=i b+c=i      
|A|=11, A(without commas)={oiabcdefgux}. Constraints:
a*d=e c*d=f e+f=d d+g=b e*f=u  a*c=u   e*c=u a*f=u  e+g=b;(#0)
e*g=u  f*g=u;(Case2)
 a+b=x  a+c=i b+c=i;(2d)
Result for A=Case2d:  |Sub(A)| = 632, that is,
sigma(A) = |Sub(A)|*2^(8-|A|) =  79.0000000000000000 .
  Few subuniverses, Subcase (2d) above is excluded.

~~Case (2): e*g=u and f*g=u.
            (so u<g, u is the bottom, and still e*f=u)
~~~~~~Subcase (2e) a+b=:x<i  a+c=:y<i  b+c=i    
           Remark: we do not have to use x+y=i !  
|A|=12, A(without commas)={oiabcdefguxy}. Constraints:
a*d=e c*d=f e+f=d d+g=b e*f=u  a*c=u   e*c=u a*f=u  e+g=b;(#0)
e*g=u  f*g=u;(Case2)
 a+b=x  a+c=y  b+c=i;(2e)
Result for A=Case2e:  |Sub(A)| = 1248, that is,
sigma(A) = |Sub(A)|*2^(8-|A|) =  78.0000000000000000 .
  Few subuniverses, Subcase (2e) above is excluded.

~~Case (2): e*g=u and f*g=u.
            (so u<g, u is the bottom, and still e*f=u)
~~~~~~Subcase (2f) a+b=:x<i  a+c=i  b+c:=y<i    
|A|=12, A(without commas)={oiabcdefguxy}. Constraints:
a*d=e c*d=f e+f=d d+g=b e*f=u  a*c=u   e*c=u a*f=u  e+g=b;(#0)
e*g=u  f*g=u;(Case2)
a+b=x  a+c=i  b+c=y;(2f)
Result for A=Case2f:  |Sub(A)| = 1248, that is,
sigma(A) = |Sub(A)|*2^(8-|A|) =  78.0000000000000000 .
  Few subuniverses, Subcase (2f) above is excluded.

~~Case (2): e*g=u and f*g=u.
            (so u<g, u is the bottom, and still e*f=u)
~~~~~~Subcase (2g) a+b=i  a+c=:x<i  b+c:=y<i    
|A|=12, A(without commas)={oiabcdefguxy}. Constraints:
a*d=e c*d=f e+f=d d+g=b e*f=u  a*c=u   e*c=u a*f=u  e+g=b;(#0)
e*g=u  f*g=u;(Case2)
a+b=i  a+c=x  b+c=y;(2g)
Result for A=Case2g:  |Sub(A)| = 1248, that is,
sigma(A) = |Sub(A)|*2^(8-|A|) =  78.0000000000000000 .
  Few subuniverses, Subcase (2g) above is excluded.
  Since no more possibility for the joins of a,b,c, we get that
  Case (2) is excluded.

~~Case (3): e*g=u and f*g:=v>u.
            (u=e*f is still the bottom) Clearly, e*v=u.
~~~~~~Subcase (3a) a+b=:i a+c=i b+c=i      
|A|=11, A(without commas)={oiabcdefguv}. Constraints:
a*d=e c*d=f e+f=d d+g=b e*f=u  a*c=u   e*c=u a*f=u  e+g=b;(#0)
e*g=u  f*g=v  e*v=u;(Case3)
a+b=i a+c=i b+c=i;(3a)
Result for A=Case3a:  |Sub(A)| = 600, that is,
sigma(A) = |Sub(A)|*2^(8-|A|) =  75.0000000000000000 .
  Few subuniverses, Subcase (3a) above is excluded.

~~Case (3): e*g=u and f*g:=v>u.
            (u=e*f is still the bottom) Clearly, e*v=u.
~~~~~~Subcase (3b) a+b=x a+c=i b+c=i. (x<i)
|A|=12, A(without commas)={oiabcdefguvx}. Constraints:
a*d=e c*d=f e+f=d d+g=b e*f=u  a*c=u   e*c=u a*f=u  e+g=b;(#0)
e*g=u  f*g=v  e*v=u;(Case3)
 a+b=x a+c=i b+c=i;(3b)
Result for A=Case3b:  |Sub(A)| = 1172, that is,
sigma(A) = |Sub(A)|*2^(8-|A|) =  73.2500000000000000 .
  Few subuniverses, Subcase (3b) above is excluded.

~~Case (3): e*g=u and f*g:=v>u.
            (u=e*f is still the bottom) Clearly, e*v=u.
~~~~~~Subcase (3c) a+b=i a+c=x b+c=i. (x<i)
|A|=12, A(without commas)={oiabcdefguvx}. Constraints:
a*d=e c*d=f e+f=d d+g=b e*f=u  a*c=u   e*c=u a*f=u  e+g=b;(#0)
e*g=u  f*g=v  e*v=u;(Case3)
a+b=i a+c=x b+c=i;(3c)
Result for A=Case3c:  |Sub(A)| = 1172, that is,
sigma(A) = |Sub(A)|*2^(8-|A|) =  73.2500000000000000 .
  Few subuniverses, Subcase (3c) above is excluded.

~~Case (3): e*g=u and f*g:=v>u.
            (u=e*f is still the bottom) Clearly, e*v=u.
~~~~~~Subcase (3d) a+b=i a+c=i b+c=x. (x<i)
|A|=12, A(without commas)={oiabcdefguvx}. Constraints:
a*d=e c*d=f e+f=d d+g=b e*f=u  a*c=u   e*c=u a*f=u  e+g=b;(#0)
e*g=u  f*g=v  e*v=u;(Case3)
a+b=i a+c=i b+c=x;(3d)
Result for A=Case3d:  |Sub(A)| = 1172, that is,
sigma(A) = |Sub(A)|*2^(8-|A|) =  73.2500000000000000 .
  Few subuniverses, Subcase (3d) above is excluded.

~~Case (3): e*g=u and f*g:=v>u.
            (u=e*f is still the bottom) Clearly, e*v=u.
~~~~~~Subcase (3e) a+b=x a+c=y b+c=i. (x,y<i)
|A|=13, A(without commas)={oiabcdefguvxy}. Constraints:
a*d=e c*d=f e+f=d d+g=b e*f=u  a*c=u   e*c=u a*f=u  e+g=b;(#0)
e*g=u  f*g=v  e*v=u;(Case3)
a+b=x a+c=y b+c=i;(3e)
Result for A=Case3e:  |Sub(A)| = 2316, that is,
sigma(A) = |Sub(A)|*2^(8-|A|) =  72.3750000000000000 .
  Few subuniverses, Subcase (3e) above is excluded.

~~Case (3): e*g=u and f*g:=v>u.
            (u=e*f is still the bottom) Clearly, e*v=u.
~~~~~~Subcase (3f) a+b=x a+c=i b+c=y. (x,y<i)
|A|=13, A(without commas)={oiabcdefguvxy}. Constraints:
a*d=e c*d=f e+f=d d+g=b e*f=u  a*c=u   e*c=u a*f=u  e+g=b;(#0)
e*g=u  f*g=v  e*v=u;(Case3)
a+b=x a+c=i b+c=y;(3f)
Result for A=Case3f:  |Sub(A)| = 2316, that is,
sigma(A) = |Sub(A)|*2^(8-|A|) =  72.3750000000000000 .
  Few subuniverses, Subcase (3f) above is excluded.

~~Case (3): e*g=u and f*g:=v>u.
            (u=e*f is still the bottom) Clearly, e*v=u.
~~~~~~Subcase (3g) a+b=i a+c=x b+c=y. (x,y<i)
|A|=13, A(without commas)={oiabcdefguvxy}. Constraints:
a*d=e c*d=f e+f=d d+g=b e*f=u  a*c=u   e*c=u a*f=u  e+g=b;(#0)
e*g=u  f*g=v  e*v=u;(Case3)
a+b=i a+c=x b+c=y;(3g)
Result for A=Case3g:  |Sub(A)| = 2316, that is,
sigma(A) = |Sub(A)|*2^(8-|A|) =  72.3750000000000000 .
  Few subuniverses, Subcase (3g) above is excluded.
           No more possibility for the joins of a,b,c, whence
  Case (3) is excluded.

~~Case (4): e*g=v>u and f*g:=u.
            (u=e*f is still the bottom) Clearly, f*v=u.
~~~~~~Subcase (4a) a+b=:i a+c=i b+c=i      
|A|=11, A(without commas)={oiabcdefguv}. Constraints:
a*d=e c*d=f e+f=d d+g=b e*f=u  a*c=u   e*c=u a*f=u  e+g=b;(#0)
e*g=v  f*g=u  f*v=u;(Case4)
a+b=i a+c=i b+c=i;(4a)
Result for A=Case4a:  |Sub(A)| = 606, that is,
sigma(A) = |Sub(A)|*2^(8-|A|) =  75.7500000000000000 .
  Few subuniverses, Subcase (4a) above is excluded.

~~Case (4): e*g=v>u and f*g:=u.
            (u=e*f is still the bottom) Clearly, f*v=u.
~~~~~~Subcase (4b) a+b=x a+c=i b+c=i. (x<i)
|A|=12, A(without commas)={oiabcdefguvx}. Constraints:
a*d=e c*d=f e+f=d d+g=b e*f=u  a*c=u   e*c=u a*f=u  e+g=b;(#0)
e*g=v  f*g=u  f*v=u;(Case4)
 a+b=x a+c=i b+c=i;(4b)
Result for A=Case4b:  |Sub(A)| = 1184, that is,
sigma(A) = |Sub(A)|*2^(8-|A|) =  74.0000000000000000 .
  Few subuniverses, Subcase (4b) above is excluded.

~~Case (4): e*g=v>u and f*g:=u.
            (u=e*f is still the bottom) Clearly, f*v=u.
~~~~~~Subcase (4c) a+b=i a+c=x b+c=i. (x<i)
|A|=12, A(without commas)={oiabcdefguvx}. Constraints:
a*d=e c*d=f e+f=d d+g=b e*f=u  a*c=u   e*c=u a*f=u  e+g=b;(#0)
e*g=v  f*g=u  f*v=u;(Case4)
a+b=i a+c=x b+c=i;(4c)
Result for A=Case4c:  |Sub(A)| = 1184, that is,
sigma(A) = |Sub(A)|*2^(8-|A|) =  74.0000000000000000 .
  Few subuniverses, Subcase (4c) above is excluded.

~~Case (4): e*g=v>u and f*g:=u.
            (u=e*f is still the bottom) Clearly, f*v=u.
~~~~~~Subcase (4d) a+b=i a+c=i b+c=x. (x<i)
|A|=12, A(without commas)={oiabcdefguvx}. Constraints:
a*d=e c*d=f e+f=d d+g=b e*f=u  a*c=u   e*c=u a*f=u  e+g=b;(#0)
e*g=v  f*g=u  f*v=u;(Case4)
a+b=i a+c=i b+c=x;(4d)
Result for A=Case4d:  |Sub(A)| = 1184, that is,
sigma(A) = |Sub(A)|*2^(8-|A|) =  74.0000000000000000 .
  Few subuniverses, Subcase (4d) above is excluded.

~~Case (4): e*g=v>u and f*g:=u.
            (u=e*f is still the bottom) Clearly, f*v=u.
~~~~~~Subcase (4e) a+b=x a+c=y b+c=i. (x,y<i)
|A|=13, A(without commas)={oiabcdefguvxy}. Constraints:
a*d=e c*d=f e+f=d d+g=b e*f=u  a*c=u   e*c=u a*f=u  e+g=b;(#0)
e*g=v  f*g=u  f*v=u;(Case4)
a+b=x a+c=y b+c=i;(4e)
Result for A=Case4e:  |Sub(A)| = 2340, that is,
sigma(A) = |Sub(A)|*2^(8-|A|) =  73.1250000000000000 .
  Few subuniverses, Subcase (4e) above is excluded.

~~Case (4): e*g=v>u and f*g:=u.
            (u=e*f is still the bottom) Clearly, f*v=u.
~~~~~~Subcase (4f) a+b=x a+c=i b+c=y. (x,y<i)
|A|=13, A(without commas)={oiabcdefguvxy}. Constraints:
a*d=e c*d=f e+f=d d+g=b e*f=u  a*c=u   e*c=u a*f=u  e+g=b;(#0)
e*g=v  f*g=u  f*v=u;(Case4)
a+b=x a+c=i b+c=y;(4f)
Result for A=Case4f:  |Sub(A)| = 2340, that is,
sigma(A) = |Sub(A)|*2^(8-|A|) =  73.1250000000000000 .
  Few subuniverses, Subcase (4f) above is excluded.

~~Case (4): e*g=v>u and f*g:=u.
            (u=e*f is still the bottom) Clearly, f*v=u.
~~~~~~Subcase (4g) a+b=i a+c=x b+c=y. (x,y<i)
|A|=13, A(without commas)={oiabcdefguvxy}. Constraints:
a*d=e c*d=f e+f=d d+g=b e*f=u  a*c=u   e*c=u a*f=u  e+g=b;(#0)
e*g=v  f*g=u  f*v=u;(Case4)
a+b=i a+c=x b+c=y;(4g)
Result for A=Case4g:  |Sub(A)| = 2340, that is,
sigma(A) = |Sub(A)|*2^(8-|A|) =  73.1250000000000000 .
  Few subuniverses, Subcase (4g) above is excluded.
           No more possibility for the joins of a,b,c, whence
  Case (4) is excluded.

~~Case (5): e*g=v>u and f*g:=w>u.
            (e*f=u is still the bottom;  e*w=u, f*v=u).
~~~~~~Subcase (5a) a+b=:i a+c=i b+c=i      
|A|=12, A(without commas)={oiabcdefguvw}. Constraints:
a*d=e c*d=f e+f=d d+g=b e*f=u  a*c=u   e*c=u a*f=u  e+g=b;(#0)
e*g=v  f*g=w  e*w=u  f*v=u;(Case5)
a+b=i a+c=i b+c=i;(5a)
Result for A=Case5a:  |Sub(A)| = 1094, that is,
sigma(A) = |Sub(A)|*2^(8-|A|) =  68.3750000000000000 .
  Few subuniverses, Subcase (5a) above is excluded.

~~Case (5): e*g=v>u and f*g:=w>u.
            (e*f=u is still the bottom;  e*w=u, f*v=u).
~~~~~~Subcase (5b) a+b=x a+c=i b+c=i. (x<i)
|A|=13, A(without commas)={oiabcdefguvwx}. Constraints:
a*d=e c*d=f e+f=d d+g=b e*f=u  a*c=u   e*c=u a*f=u  e+g=b;(#0)
e*g=v  f*g=w  e*w=u  f*v=u;(Case5)
 a+b=x a+c=i b+c=i;(5b)
Result for A=Case5b:  |Sub(A)| = 2138, that is,
sigma(A) = |Sub(A)|*2^(8-|A|) =  66.8125000000000000 .
  Few subuniverses, Subcase (5b) above is excluded.

~~Case (5): e*g=v>u and f*g:=w>u.
            (e*f=u is still the bottom;  e*w=u, f*v=u).
~~~~~~Subcase (5c) a+b=i a+c=x b+c=i. (x<i)
|A|=13, A(without commas)={oiabcdefguvwx}. Constraints:
a*d=e c*d=f e+f=d d+g=b e*f=u  a*c=u   e*c=u a*f=u  e+g=b;(#0)
e*g=v  f*g=w  e*w=u  f*v=u;(Case5)
a+b=i a+c=x b+c=i;(5c)
Result for A=Case5c:  |Sub(A)| = 2138, that is,
sigma(A) = |Sub(A)|*2^(8-|A|) =  66.8125000000000000 .
  Few subuniverses, Subcase (5c) above is excluded.

~~Case (5): e*g=v>u and f*g:=w>u.
            (e*f=u is still the bottom;  e*w=u, f*v=u).
~~~~~~Subcase (5d) a+b=i a+c=i b+c=x. (x<i)
|A|=13, A(without commas)={oiabcdefguvwx}. Constraints:
a*d=e c*d=f e+f=d d+g=b e*f=u  a*c=u   e*c=u a*f=u  e+g=b;(#0)
e*g=v  f*g=w  e*w=u  f*v=u;(Case5)
a+b=i a+c=i b+c=x;(5d)
Result for A=Case5d:  |Sub(A)| = 2138, that is,
sigma(A) = |Sub(A)|*2^(8-|A|) =  66.8125000000000000 .
  Few subuniverses, Subcase (5d) above is excluded.

~~Case (5): e*g=v>u and f*g:=w>u.
            (e*f=u is still the bottom;  e*w=u, f*v=u).
~~~~~~Subcase (5e) a+b=x a+c=y b+c=i. (x,y<i)
|A|=14, A(without commas)={oiabcdefguvwxy}. Constraints:
a*d=e c*d=f e+f=d d+g=b e*f=u  a*c=u   e*c=u a*f=u  e+g=b;(#0)
e*g=v  f*g=w  e*w=u  f*v=u;(Case5)
a+b=x a+c=y b+c=i;(5e)
Result for A=Case5e:  |Sub(A)| = 4226, that is,
sigma(A) = |Sub(A)|*2^(8-|A|) =  66.0312500000000000 .
  Few subuniverses, Subcase (5e) above is excluded.

~~Case (5): e*g=v>u and f*g:=w>u.
            (e*f=u is still the bottom;  e*w=u, f*v=u).
~~~~~~Subcase (5f) a+b=x a+c=i b+c=y. (x,y<i)
|A|=14, A(without commas)={oiabcdefguvwxy}. Constraints:
a*d=e c*d=f e+f=d d+g=b e*f=u  a*c=u   e*c=u a*f=u  e+g=b;(#0)
e*g=v  f*g=w  e*w=u  f*v=u;(Case5)
a+b=x a+c=i b+c=y;(5f)
Result for A=Case4f:  |Sub(A)| = 4226, that is,
sigma(A) = |Sub(A)|*2^(8-|A|) =  66.0312500000000000 .
  Few subuniverses, Subcase (5f) above is excluded.

~~Case (5): e*g=v>u and f*g:=w>u.
            (e*f=u is still the bottom;  e*w=u, f*v=u).
~~~~~~Subcase (5g) a+b=i a+c=x b+c=y. (x,y<i)
|A|=14, A(without commas)={oiabcdefguvwxy}. Constraints:
a*d=e c*d=f e+f=d d+g=b e*f=u  a*c=u   e*c=u a*f=u  e+g=b;(#0)
e*g=v  f*g=w  e*w=u  f*v=u;(Case5)
a+b=i a+c=x b+c=y;(5g)
Result for A=Case5g:  |Sub(A)| = 4226, that is,
sigma(A) = |Sub(A)|*2^(8-|A|) =  66.0312500000000000 .
  Few subuniverses, Subcase (5g) above is excluded.
           No more possibility for the joins of a,b,c, whence
  Case (5) is excluded.
There are no more cases. Therefore, E_0 cannot be subposet of L,
as required.

The computation took 172/1000 seconds.
\end{verbatim}
\normalfont

\clearpage
\def\appendixhead{{Cz\'edli: Eighty-three sublattices / Appendix: $E_1$}}
\markboth\appendixhead\appendixhead
\centerline{\bf{{\LARGE Appendix: $E_1$}}}
\normalfont
\begin{verbatim}
Version of January 1, 2019; reformatted on May 7, 2019
E_1 from Kelly-Rival: "Planar lattices"; its edges are
of og oh oj fb fc gc gd hd he ja bi ca da ei ai  
Suppose, for contradiction, that  E_1 is a subposet of L
and L has many subuniverses.
Note the symmetry: the reflection accross the o-g-a-j line 
except for the fixed point j

First, we can always assume (after lifting some elements):
(always) b*c=f c*d=g d*e=h\w Then, in after lowering, 
(always) f+g=c g+h=d      
Second, a is the join of two of c,d,j; 
        i is the join of two of b,a,e,  (#1)
Let o:=f*g*h;  maybe 0:=o*j<0 !!!, note that
o=f*h since f*h=f*c*d*h=f*g*h (#2)
SUBSIZE version Dec 29, 2018 (started at 0:12:40) reports:
 [ Supported by the Hungarian Research Grant KH 126581,
                                (C) Gabor Czedli, 2018 ]


|A|=11, A(without commas)={ioabcdefghj}. Constraints:
(always) b*c=f c*d=g d*e=h; Then  after lowering
(always) f+g=c g+h=d
(always  see #2) f*h=o; Symmetric case
 (C1)c+d=a; Symmetric case
    (then)f+d=a c+h=a;since f+d=f+g+d=c+d c+h=c+g+h=c+d
  (C1a) f*g=o
    (then) b*g=o f*d=o;since b*g=b*c*g=f*g  f*d=f*c*d=f*g
   (C1a1) g*h=o
     (then) g*e=o c*h=o;since g*e=g*d*e=g*h  c*h=c*d*h=g*h
Result for A=(E1/C1a1):  |Sub(A)| = 656, that is,
sigma(A) = |Sub(A)|*2^(8-|A|) =  82.0000000000000000 .
Few subuniverses, (C1a1) is excluded.

|A|=12, A(without commas)={ioabcdefghjx}. Constraints:
(always) b*c=f c*d=g d*e=h; Then  after lowering
(always) f+g=c g+h=d; Symmetric case
(always  see #2) f*h=o; Symmetric case
 (C1)c+d=a; Symmetric case
    (then)f+d=a c+h=a;since f+d=f+g+d=c+d c+h=c+g+h=c+d
  (C1a) f*g=o
    (then) b*g=o f*d=o;since b*g=b*c*g=f*g  f*d=f*c*d=f*g
   (C1a2) g*h=x; new x>o
     (then) f*x=o; since f*h=o
     (and) b*x=o; since b*x=b*c*x=f*x
Result for A=(E1/C1a2):  |Sub(A)| = 1208, that is,
sigma(A) = |Sub(A)|*2^(8-|A|) =  75.5000000000000000 .
Few subuniverses, (C1a2) is excluded.
Thus, (C1a) is excluded. Since (C1) is symmetric,
g*h=o is also excluded! Furthermore, since f*g*h=o,
p:=f*g and q:=g*h are parallel !

|A|=13, A(without commas)={ioabcdefghjpq}. Constraints:
(always) b*c=f c*d=g d*e=h; Then  after lowering
(always) f+g=c g+h=d; Symmetric case
(always  see #2) f*h=o; Symmetric case
 (C1)c+d=a; Symmetric case
    (then)f+d=a c+h=a;since f+d=f+g+d=c+d c+h=c+g+h=c+d
  (C1b) f*g=p f*h=q; see after (C1a2) above
    (then) f*q=o p*h=o p*q=o; since #2
Result for A=(E1/C1b):  |Sub(A)| = 2292, that is,
sigma(A) = |Sub(A)|*2^(8-|A|) =  71.6250000000000000 .
Few subuniverses, (C1b) is excluded.
Thus, (C1) is excluded.

|A|=12, A(without commas)={ioabcdefghjr}. Constraints:
(always) b*c=f c*d=g d*e=h; Then  after lowering
(always) f+g=c g+h=d
(always  see #2) f*h=o; Symmetric case
 (C2)c+d=r; r<a  symmetric case
    (then)f+d=r c+h=r;since f+d=f+g+d=c+d c+h=c+g+h=c+d
    (and) r+j=a
  (C2a) f*g=o
    (then) b*g=o f*d=o;since b*g=b*c*g=f*g  f*d=f*c*d=f*g
Result for A=(E1/C2a):  |Sub(A)| = 1206, that is,
sigma(A) = |Sub(A)|*2^(8-|A|) =  75.3750000000000000 .
Few subuniverses, (C2a) is excluded. But (C2) is symmetric,
whence g*h=o is also excluded. Thus, p:=f*g and q:=g*h
are parallel and (#2) applies !!! 

|A|=14, A(without commas)={ioabcdefghjrpq}. Constraints:
(always) b*c=f c*d=g d*e=h; Then  after lowering
(always) f+g=c g+h=d
(always  see #2) f*h=o; Symmetric case
 (C2)c+d=r; r<a  symmetric case
    (then)f+d=r c+h=r;since f+d=f+g+d=c+d c+h=c+g+h=c+d
    (and) r+j=a
  (C2b) f*g=p g*h=q; see above  after (C2a)
    (then) f*q=o p*h=o p*q=o; by #2
Result for A=(E1/C2b):  |Sub(A)| = 3946, that is,
sigma(A) = |Sub(A)|*2^(8-|A|) =  61.6562500000000000 .
Few subuniverses, (C2b) is excluded.
Thus, (C2) is excluded.
All cases have been excluded, q.e.d.

The computation took 32/1000 seconds.
\end{verbatim}
\normalfont

\clearpage
\def\appendixhead{{Cz\'edli: Eighty-three sublattices / Appendix: Eight-element fence}}
\markboth\appendixhead\appendixhead
\centerline{\bf{{\LARGE Appendix: Eight-element fence}}}
\normalfont
\begin{verbatim}
Version of December 31, 2018, revised May 7, 2019
Fence_8 is the 8-element fence; its edges are 
(we write xy to denote that y covers x):
ab cb cd ed ef gf gh and we add o and i with oc oe and  di fi
|A|=10, A(without commas)={abcdefghoi}. Constraints:
a+c=b c+e=d e+g=f; the joins exist only for neighboring atoms
b*d=c d*f=e f*h=g; the meets exist only for neighboring coatoms
 (also) d+f=i c*e=o; since we define d+f=:i and  c*e=:o
   (so) b*e=o c*f=o; since b*e=b*d*e=c*e=o and c*f=c*d*f=c*e=o
   (and) d+g=i f+c=i; since d+g=d+e+g=d+f=i and f+c=f+e+c=f+d=i
Result for A=fence_8:  |Sub(A)| = 338, that is,
sigma(A) = |Sub(A)|*2^(8-|A|) =  84.5000000000000000 .
We are not ready yet.

Fence_8 is the 8-element fence; its edges are 
  (we write xy to denote that y covers x):
ab cb cd ed ef gf fh and we add o and i with oc oe and  di fi
|A|=10, A(without commas)={abcdefghoi}. Constraints:
a+c=b c+e=d e+g=f; the joins exist only for neighboring atoms
b*d=c d*f=e f*h=g; the meets exist only for neighboring atoms
 (also) d+f=i c*e=o; since we define d+f=:i and  c*e=:o
   (so) b*e=o c*f=o; since b*e=b*d*e=c*e=o and c*f=c*d*f=c*e=o
   (and) d+g=i f+c=i; since d+g=d+e+g=d+f=i and f+c=f+e+c=f+d=i
 (C1) b+d=i
    (so) b+e=i; since b+e=b+c+e=b+d=i
Result for A=fence_8:  |Sub(A)| = 316, that is,
sigma(A) = |Sub(A)|*2^(8-|A|) =  79.0000000000000000 .
This case is excluded.

Fence_8 is the 8-element fence; its edges are 
  (we write xy to denote that y covers x):
ab cb cd ed ef gf fh and we add o and i with oc oe and  di fi
|A|=11, A(without commas)={abcdefghoix}. Constraints:
a+c=b c+e=d e+g=f; the joins exist only for neighboring atoms
b*d=c d*f=e f*h=g; the meets exist only for neighboring atoms
 (also) d+f=i c*e=o; since we define d+f=:i and  c*e=:o
   (so) b*e=o c*f=o; since b*e=b*d*e=c*e=o and c*f=c*d*f=c*e=o
   (and) d+g=i f+c=i; since d+g=d+e+g=d+f=i and f+c=f+e+c=f+d=i
 (C2) b+d=x; since b+d=:x is distinct from i
   (so) b+e=x; since b+e=b+c+e=b+d=x
Result for A=fence_8:  |Sub(A)| = 618, that is,
sigma(A) = |Sub(A)|*2^(8-|A|) =  77.2500000000000000 .
This case is excluded.
Both cases are excluded, q.e.d.

The computation took 15/1000 seconds.
\end{verbatim}
\normalfont

\clearpage
\def\appendixhead{{Cz\'edli: Eighty-three sublattices / Appendix: $F_0$}}
\markboth\appendixhead\appendixhead
\centerline{\bf{{\LARGE Appendix: $F_0$}}}
\normalfont
\begin{verbatim}
 Version of December 30, 2018
F_0 from Kelly-Rival: "Planar lattices"; its edges are
       ai bi ca da eb ec fe fd gc of og
Here we prove that F_0 is not a subposet of L, provided 
L has many subuniverses. We can and will always assume 
that b*c=e, e+g=c, c+d=a, d*e=f, a+b=i, f*g=o; this is a
selfdual assumption and F_0 is a selfdual lattice. 
There will be several cases, whose assumptions will be 
additional to the ones above.
|A|=9, A(without commas)={oiabcdefg}. Constraints:
(always) b*c=e e+g=c c+d=a d*e=f a+b=i f*g=o; Selfdual!
(C1) b+c=i
  (then) b+g=i; since b+g=b+e+g=b+c
 (C1a) e*g=o; the dual of (C1)
   (then) b*g=o; since b*g=b*c*g=e*g. Selfdual situation!
  (C1a.1) d+e=a
    (then) b+d=i; since b+d=b+e+d=b+a
   (C1a.1a) c*d=f
      (then) d*g=o; since d*g=d*c*g=f*g. Selfdual situation!
    (C1a.1a.1) a*b=e
       (then) b*d=f; since b*d=b*a*d=e*d
     (C1a.1a.1a) f+g=c
        (then) d+g=a; since d+g=d+f+g=d+c
Result for A=(F_0/C1a.1a.1a):  |Sub(A)| = 166, that is,
sigma(A) = |Sub(A)|*2^(8-|A|) =  83.0000000000000000 .
   Few subuniverses, (C1a.1a.1a) is excluded.

|A|=10, A(without commas)={oiabcdefgx}. Constraints:
(always) b*c=e e+g=c c+d=a d*e=f a+b=i f*g=o; see the paper
(C1) b+c=i
  (then) b+g=i; since b+g=b+e+g=b+c
 (C1a) e*g=o; the dual of (C1)
   (then) b*g=o; since b*g=b*c*g=e*g. Selfdual situation!
  (C1a.1) d+e=a
    (then) b+d=i; since b+d=b+e+d=b+a
   (C1a.1a) c*d=f
      (then) d*g=o; since d*g=d*c*g=f*g. Selfdual situation!
    (C1a.1a.1) a*b=e
       (then) b*d=f; since b*d=b*a*d=e*d
     (C1a.1a.1b) f+g=x ; x<c
        (then) e+x=c; since c=e+g<=e+x<=c
Result for A=(F_0/1a.1a.1b):  |Sub(A)| = 297, that is,
sigma(A) = |Sub(A)|*2^(8-|A|) =  74.2500000000000000 .
   Few subuniverses, (C1a.1a.1b) is excluded.
   Thus, (C1a.1a.1) is excluded. Since (C1a.1a) is
   seldfual, the dual of (C1a.1a.1) is also excluded (*1)

|A|=11, A(without commas)={oiabcdefgxy}. Constraints:
(always) b*c=e e+g=c c+d=a d*e=f a+b=i f*g=o; see the paper
(C1) b+c=i
  (then) b+g=i; since b+g=b+e+g=b+c
 (C1a) e*g=o; the dual of (C1)
   (then) b*g=o; since b*g=b*c*g=e*g. Selfdual situation!
  (C1a.1) d+e=a
    (then) b+d=i; since b+d=b+e+d=b+a
   (C1a.1a) c*d=f
      (then) d*g=o; since d*g=d*c*g=f*g. Selfdual situation!
    (C1a.1a.2) a*b=x f+g=y; x>e  y<c  (*1) is used.
       (then) c*x=e e+y=c ; since c*b=e and e+g=c
Result for A=(F_0/1a.1a.2):  |Sub(A)| = 544, that is,
sigma(A) = |Sub(A)|*2^(8-|A|) =  68.0000000000000000 .
   Few subuniverses, (C1a.1a.2) is excluded.
   Thus, (C1a.1a) is excluded.

|A|=10, A(without commas)={oiabcdefgx}. Constraints:
(always) b*c=e e+g=c c+d=a d*e=f a+b=i f*g=o; see the paper
(C1) b+c=i
  (then) b+g=i; since b+g=b+e+g=b+c
 (C1a) e*g=o; the dual of (C1)
   (then) b*g=o; since b*g=b*c*g=e*g. Selfdual situation!
  (C1a.1) d+e=a
    (then) b+d=i; since b+d=b+e+d=b+a
   (C1a.1b) c*d=x ; x>f
     (then) e*x=f ; since e*d=f
    (C1a.1b.1) x*g=o
      (then) d*g=o ; since d*g=g*c*d=g*x=o
     (C1a.1b.1a) a*b=e
Result for A=(F_0/1a.1b.1a):  |Sub(A)| = 313, that is,
sigma(A) = |Sub(A)|*2^(8-|A|) =  78.2500000000000000 .
   Few subuniverses, (C1a.1b.1a) is excluded.
   

|A|=11, A(without commas)={oiabcdefgxy}. Constraints:
(always) b*c=e e+g=c c+d=a d*e=f a+b=i f*g=o; see the paper
(C1) b+c=i
  (then) b+g=i; since b+g=b+e+g=b+c
 (C1a) e*g=o; the dual of (C1)
   (then) b*g=o; since b*g=b*c*g=e*g. Selfdual situation!
  (C1a.1) d+e=a
    (then) b+d=i; since b+d=b+e+d=b+a
   (C1a.1b) c*d=x ; x>f  x not>g
     (then) e*x=f ; since e*d=f
    (C1a.1b.1) x*g=o
      (then) d*g=o ; since d*g=g*c*d=g*x=o
     (C1a.1b.1b) a*b=y ; y>e
Result for A=(F_0/1a.1b.1b):  |Sub(A)| = 628, that is,
sigma(A) = |Sub(A)|*2^(8-|A|) =  78.5000000000000000 .
   Few subuniverses, (C1a.1b.1b) is excluded.
   Thus, (C1a.1b.1) is excluded. 

|A|=11, A(without commas)={oiabcdefgxy}. Constraints:
(always) b*c=e e+g=c c+d=a d*e=f a+b=i f*g=o; see the paper
(C1) b+c=i
  (then) b+g=i; since b+g=b+e+g=b+c
 (C1a) e*g=o; the dual of (C1)
   (then) b*g=o; since b*g=b*c*g=e*g. Selfdual situation!
  (C1a.1) d+e=a
    (then) b+d=i; since b+d=b+e+d=b+a
   (C1a.1b) c*d=x ; x>f  x not>g
     (then) e*x=f ; since e*d=f
    (C1a.1b.2) x*g=y ; g>y>o  so y is a new element
      (then) d*g=y ; since d*g=d*c*g=x*g=y
Result for A=(F_0/1a.1b.2):  |Sub(A)| = 635, that is,
sigma(A) = |Sub(A)|*2^(8-|A|) =  79.3750000000000000 .
   Few subuniverses, (C1a.1b.2) is excluded.
   Thus, (C1a.1b) is excluded. 
   Thus, (C1a.1) is excluded.   Since (C1a) is selfdual,
   the dual of (C1a.1) is also excluded!  (*2)

|A|=12, A(without commas)={oiabcdefguvx}. Constraints:
(always) b*c=e e+g=c c+d=a d*e=f a+b=i f*g=o; see the paper
(C1) b+c=i
  (then) b+g=i; since b+g=b+e+g=b+c
 (C1a) e*g=o; the dual of (C1)
   (then) b*g=o; since b*g=b*c*g=e*g. Selfdual situation!
  (C1a.2) d+e=v c*d=u; v<a  u>f  (*2) was used.
    (then) c+v=a e*u=f; since c+d=a and e*d=f
    (and) e+u=x d+x=v b*x=e  ; since x:=e+u<=c*v
Result for A=(F_0/1a.2):  |Sub(A)| = 1056, that is,
sigma(A) = |Sub(A)|*2^(8-|A|) =  66.0000000000000000 .
   Few subuniverses, (C1a.2) is excluded.
   Thus, (C1a) is excluded.

|A|=10, A(without commas)={oiabcdefgx}. Constraints:
(always) b*c=e e+g=c c+d=a d*e=f a+b=i f*g=o; see the paper
(C1) b+c=i
  (then) b+g=i; since b+g=b+e+g=b+c
 (C1b) e*g=x; x>o
   (then) f*x=o ; since f*g=o
  (C1b.1) d+e=a
    (then) b+d=i; since b+d=b+e+d=b+a
   (C1b.1a) c*d=f
      (then) d*g=o; since d*g=d*c*g=f*g
    (C1b.1a.1) a*b=e
       (then) b*d=f; since b*d=b*a*d=e*d
Result for A=(F_0/1b.1a.1):  |Sub(A)| = 313, that is,
sigma(A) = |Sub(A)|*2^(8-|A|) =  78.2500000000000000 .
   Few subuniverses, (C1b.1a.1) is excluded.

|A|=11, A(without commas)={oiabcdefgxy}. Constraints:
(always) b*c=e e+g=c c+d=a d*e=f a+b=i f*g=o; see the paper
(C1) b+c=i
  (then) b+g=i; since b+g=b+e+g=b+c
 (C1b) e*g=x; x>o
   (then) f*x=o ; since f*g=o
  (C1b.1) d+e=a
    (then) b+d=i; since b+d=b+e+d=b+a
   (C1b.1a) c*d=f
      (then) d*g=o; since d*g=d*c*g=f*g
    (C1b.1a.2) a*b=y ; y>e
      (then) c*y=e ; since c*b=e
Result for A=(F_0/1b.1a.2):  |Sub(A)| = 576, that is,
sigma(A) = |Sub(A)|*2^(8-|A|) =  72.0000000000000000 .
   Few subuniverses, (C1b.1a.2) is excluded.
   Thus, (C1b.1a) is excluded.

|A|=11, A(without commas)={oiabcdefgxu}. Constraints:
(always) b*c=e e+g=c c+d=a d*e=f a+b=i f*g=o; see the paper
(C1) b+c=i
  (then) b+g=i; since b+g=b+e+g=b+c
 (C1b) e*g=x; x>o
   (then) f*x=o ; since f*g=o
  (C1b.1) d+e=a
    (then) b+d=i; since b+d=b+e+d=b+a
   (C1b.1b) c*d=u ; u>f
     (then) e*u=f ; since e*d=f
Result for A=(F_0/1b.1b):  |Sub(A)| = 640, that is,
sigma(A) = |Sub(A)|*2^(8-|A|) =  80.0000000000000000 .
   Few subuniverses, (C1b.1b) is excluded.
   Thus, (C1b.1) is excluded.

|A|=11, A(without commas)={oiabcdefgxv}. Constraints:
(always) b*c=e e+g=c c+d=a d*e=f a+b=i f*g=o; see the paper
(C1) b+c=i
  (then) b+g=i; since b+g=b+e+g=b+c
 (C1b) e*g=x; x>o
   (then) f*x=o ; since f*g=o
  (C1b.2) d+e=v ; v<a
    (then) c+v=a ; since c+d=a
   (C1b.2a) c*d=f
     (then) d*g=o; since d*g=d*c*g=f*g.
Result for A=(F_0/1b.2a):  |Sub(A)| = 635, that is,
sigma(A) = |Sub(A)|*2^(8-|A|) =  79.3750000000000000 .
   Few subuniverses, (C1b.2a) is excluded.

|A|=12, A(without commas)={oiabcdefgxvu}. Constraints:
(always) b*c=e e+g=c c+d=a d*e=f a+b=i f*g=o; see the paper
(C1) b+c=i
  (then) b+g=i; since b+g=b+e+g=b+c
 (C1b) e*g=x; x>o
   (then) f*x=o ; since f*g=o
  (C1b.2) d+e=v ; v<a
    (then) c+v=a ; since c+d=a
   (C1b.2b) c*d=u ; u>f
     (then) e*u=f ; since e*d=f
Result for A=(F_0/1b.2b):  |Sub(A)| = 1208, that is,
sigma(A) = |Sub(A)|*2^(8-|A|) =  75.5000000000000000 .
   Few subuniverses, (C1b.2b) is excluded.
   Thus, (C1b.2) is excluded.
   Thus, (C1b) is excluded.
   Thus, (C1) is excluded.
   (always) is selfdual, so the dual of (C1) is excluded (*3)

|A|=11, A(without commas)={oiabcdefgst}. Constraints:
(always) b*c=e e+g=c c+d=a d*e=f a+b=i f*g=o; see the paper
(C2) b+c=t e*g=s; t<i  s>o  (*3) was used. Selfdual situation!
  (then) a+t=i f*s=o ; since a+b=i and f*g=o
  (and) b+g=t b*g=s; since b+g=b+e+g=b+c and b*g=b*c*g=e*g
Result for A=(F_0/2):  |Sub(A)| = 660, that is,
sigma(A) = |Sub(A)|*2^(8-|A|) =  82.5000000000000000 .
   Few subuniverses, (C2) is excluded.
   All cases have been excluded, q.e.d.

The computation took 78/1000 seconds.
\end{verbatim}
\normalfont

\clearpage
\def\appendixhead{{Cz\'edli: Eighty-three sublattices / Appendix: $F_0$-alternative}}
\markboth\appendixhead\appendixhead
\centerline{\bf{{\LARGE Appendix: $F_0$-alternative}}}
\normalfont
\begin{verbatim}
Version of November 28, 2018; reformatted May 7, 2019
F_0 from Kelly-Rival: "Planar lattices", an alternative approach 
The edges are ai bi ca da eb ec fe fd gc of og
 To prove that F_0 is not a subposet of L, suppose the contrary.
We can and we will ALWAYS assume that 
         c=e+g, e=b*c, f=d*e, a=c+d, i=a+b, o=f*g
There will be several cases
Letting u:=b+c, v:=c*d, w:=d+g, we have u<=i, v>=f, w<=a.
Clearly, a+u=i, e*v=f, c+w=a. 
We need to deal with Cases (1), (2),  ... (8) below. Namely,
Depending on how many of the elements u, v, and w are new, we 
are going to with Case (1) (=none of these elements are new); 
Cases (2)-(4) (= two are new); Cases (5)-(7) (=exactly one these 
elements are new); and Case (8) (none of them is new).
(We also need to deal with subcases for most of the Cases.)

~~Case (1): each of b+c=:u, c*d=:v, and  d+g=:w is a new element. 
   Observe: a+u=i, e*v=f, c+w=a
|A|=12, A(without commas)={oiabcdefguvw}. Constraints:
e+g=c b*c=e d*e=f c+d=a a+b=i f*g=o
 b+c=u c*d=v d+g=w
  a+u=i e*v=f c+w=a
Result for A=Case1:  |Sub(A)| = 1243, that is,
sigma(A) = |Sub(A)|*2^(8-|A|) =  77.6875000000000000 .
         Few subuniverses, Case (1) above is excluded.

~~Case (2): b+c=:u=i but c*d=:v and d+g=:w are new elements; then 
            e*v=f and c+w=a
~~~~Subcase (2/a):  e+v=t is also a new elements; then, clearly,
       t<c, g+t=c, and d*t=v.
|A|=12, A(without commas)={oiabcdefgvwt}. Constraints:
e+g=c b*c=e d*e=f c+d=a a+b=i f*g=o
 b+c=i c*d=v d+g=w    e*v=f  c+w=a
  e+v=t   g+t=c d*t=v
Result for A=Case2/a:  |Sub(A)| = 1094, that is,
sigma(A) = |Sub(A)|*2^(8-|A|) =  68.3750000000000000 .
         Few subuniverses, Subcase (2/a) above is excluded.

~~Case (2): b+c=i, c*d=:v, and d+g=:w, then  e*v=f and c+w=a
 (In what follows, every element is new unless otherwise stated.)
~~~~Subcase (2/b): t:=e+v=c 
|A|=11, A(without commas)={oiabcdefgvw}. Constraints:
e+g=c b*c=e d*e=f c+d=a a+b=i f*g=o
 b+c=i c*d=v d+g=w   e*v=f c+w=a
  e+v=c
Result for A=Case2/b:  |Sub(A)| = 645, that is,
sigma(A) = |Sub(A)|*2^(8-|A|) =  80.6250000000000000 .
         Few subuniverses, Subcase (2/b) above is excluded.
         Thus, Case (2) is excluded.

~~Case (3): c*d=:v=f but b+c=:u and d+g=_w are new; 
            then a+u=i and c+w=a 
~~~~Subcase (3/a):a*u=:t is new, then c<t,  b+t=u and t+d=a
|A|=12, A(without commas)={oiabcdefguwt}. Constraints:
e+g=c b*c=e d*e=f c+d=a a+b=i f*g=o
 c*d=f b+c=u d+g=w  a+u=i c+w=a
  a*u=t   b+t=u t+d=a
Result for A=Case3/a:  |Sub(A)| = 1036, that is,
sigma(A) = |Sub(A)|*2^(8-|A|) =  64.7500000000000000 .
         Few subuniverses, Subcase (3/a) above is excluded.

~~Case (3): c*d=:v=f but b+c=:u and d+g=:w are new; 
            then a+u=i and c+w=a 
~~~~Subcase (3/b): a*u is not a new element; 
                   then, clearly,  a*u=c.
|A|=11, A(without commas)={oiabcdefguw}. Constraints:
e+g=c b*c=e d*e=f c+d=a a+b=i f*g=o
 c*d=f b+c=u d+g=w  a+u=i c+w=a
  a*u=c
Result for A=Case3/b:  |Sub(A)| = 610, that is,
sigma(A) = |Sub(A)|*2^(8-|A|) =  76.2500000000000000 .
         Few subuniverses, Subcase (3/b) above is excluded.
         Thus, Case (3) is excluded.

~~Case (4): b+c=:u and  c*d=:v are new but w:=d+g=a is not;
            then, clearly, a+u=i and e*v=f.
~~~~Subcase (4/a):   a*u=:t is also a new element. 
                     Then, clearly, b+t=u and t+d=a.
|A|=12, A(without commas)={oiabcdefguvt}. Constraints:
e+g=c b*c=e d*e=f c+d=a a+b=i f*g=o
 b+c=u c*d=v d+g=a   a+u=i e*v=f
  a*u=t   b+t=u  t+d=a
Result for A=Case4/a:  |Sub(A)| = 1057, that is,
sigma(A) = |Sub(A)|*2^(8-|A|) =  66.0625000000000000 .
         Few subuniverses, Subcase (4/a) above is excluded.

~~Case (4): b+c=:u and  c*d=:v are new but w:=d+g=a is not.
            Then,  clearly, a+u=i and e*v=f.
~~~~Subcase (4/b):  t:=a*u is not new. (Clearly, a*u=c).
|A|=11, A(without commas)={oiabcdefguv}. Constraints:
e+g=c b*c=e d*e=f c+d=a a+b=i f*g=o
 b+c=u c*d=v d+g=a   a+u=i  e*v=f
  a*u=c
Result for A=Case4/b:  |Sub(A)| = 601, that is,
sigma(A) = |Sub(A)|*2^(8-|A|) =  75.1250000000000000 .
         Few subuniverses, Subcase (4/b) above is excluded.
         Thus, Case (4) is excluded.

~~Case (5): b+c=:u is new, but v:=c*d=f and w:=d+g=a, 
             then a+u=i.
~~~~Subcase (5/a): a*u=:t is also new, 
                   then c<t gives that  b+t=u and t+d=a.
|A|=11, A(without commas)={oiabcdefgut}. Constraints:
e+g=c b*c=e d*e=f c+d=a a+b=i f*g=o
 b+c=u  c*d=f d+g=a   a+u=i
  a*u=t    b+t=u  t+d=a
Result for A=Case5/a:  |Sub(A)| = 592, that is,
sigma(A) = |Sub(A)|*2^(8-|A|) =  74.0000000000000000 .
         Few subuniverses, Subcase (5/a) above is excluded.

~~Case (5): b+c=:u is new, but v:=c*d=f and w:=d+g=a, then a+u=i.
~~~~Subcase (5/b): a*u=:t is not new. 
                   (Clearly, a*u=c. Observe: a*b=b*a*u=b*c=e.)
|A|=10, A(without commas)={oiabcdefgu}. Constraints:
e+g=c b*c=e d*e=f c+d=a a+b=i f*g=o
 b+c=u  c*d=f d+g=a   a+u=i
  a*u=c  a*b=e
Result for A=Case5/b:  |Sub(A)| = 322, that is,
sigma(A) = |Sub(A)|*2^(8-|A|) =  80.5000000000000000 .
         Few subuniverses, Subcase (5/b) above is excluded.
         Thus, Case (5) is excluded.

~~Case (6): c*d=:v is new but u:=b+c=i and w:=d+g=a. 
            Then, clearly,  e*v=f
~~~~Subcase (6/a): e+v=:t is new. Since e<t<c, t*d=v and t+g=c,
|A|=11, A(without commas)={oiabcdefgvt}. Constraints:
e+g=c b*c=e d*e=f c+d=a a+b=i f*g=o
 c*d=v b+c=i d+g=a   e*v=f
  e+v=t  t*d=v t+g=c
Result for A=Case6/a:  |Sub(A)| = 605, that is,
sigma(A) = |Sub(A)|*2^(8-|A|) =  75.6250000000000000 .
         Few subuniverses, Subcase (6/a) above is excluded.

~~Case (6): c*d=:v is new but u:=b+c=i and w:=d+g=a. 
            Then, cclearly,  e*v=f.
~~~~Subcase (6/b): t:=e+v=c is old. Then, clearly, 
                   b+v=b+e+v=b+c=i, b*v=b*c*d=e*v=f
|A|=10, A(without commas)={oiabcdefgv}. Constraints:
e+g=c b*c=e d*e=f c+d=a a+b=i f*g=o
 c*d=v b+c=i d+g=a   e*v=f
  e+v=c  b+v=i b*v=f
Result for A=Case6/a:  |Sub(A)| = 328, that is,
sigma(A) = |Sub(A)|*2^(8-|A|) =  82.0000000000000000 .
         Few subuniverses, Subcase (6/b) above is excluded.
         Thus, Case (6) is excluded.

~~Case (7): d+g=:w is new, u:=b+c=i and v:=c*d=f are old. 
            Then c+w=a and b+g=b+e+g=b+c=i.
     Let c*w=:t. Since f<=t, t<>g. So g<t<c, t<w, t is new, and
     t+d=w, e+t=c, b+t=b+e+t=b+c=i)
|A|=11, A(without commas)={oiabcdefgwt}. Constraints:
e+g=c b*c=e d*e=f c+d=a a+b=i f*g=o
 d+g=w b+c=i c*d=f  c+w=a b+g=i
  c*w=t  t+d=w  e+t=c b+t=i
Result for A=Case7:  |Sub(A)| = 576, that is,
sigma(A) = |Sub(A)|*2^(8-|A|) =  72.0000000000000000 .
         Few subuniverses, Case (7) above is excluded.

~~Case (8) u:=b+c=i, v:=c*d=f and w:=d+g=a are old, 
           then b+g=b+e+g=b+c=i.                 
~~~~Subcase (8/a): suppose, for contradiction, that 
            a*b=:t>e is new. Then t*c=e.
~~~~~~Sub-subcase (8/a/1): suppose, for contradiction, 
        that c+t=:s is new, then t+g=t+e+g=t+c=s and  b*s=t.
|A|=11, A(without commas)={oiabcdefgts}. Constraints:
e+g=c b*c=e d*e=f c+d=a a+b=i f*g=o
 b+c=i c*d=f d+g=a   b+g=i
  a*b=t  t*c=e
   c+t=s    t+g=s   b*s=t
Result for A=Case8a1:  |Sub(A)| = 596, that is,
sigma(A) = |Sub(A)|*2^(8-|A|) =  74.5000000000000000 .
         Few subuniverses, Subsubcase (8/a/1) above is excluded.

~~Case (8) u:=b+c=i, v:=c*d=f and w:=d+g=a are old, 
           then b+g=b+e+g=b+c=i.                 
~~~~Subcase (8/a): suppose, for contradiction, 
                   that a*b=:t>e is new. then t*c=e.
~~~~~~Armed with not (8/a/1): s:=c+t=a is old, 
             then t+g=t+e+g=t+c=a
|A|=10, A(without commas)={oiabcdefgt}. Constraints:
e+g=c b*c=e d*e=f c+d=a a+b=i f*g=o
 b+c=i c*d=f d+g=a   b+g=i
  a*b=t  t*c=e
   c+t=a  t+g=a
Result for A=Case8a2:  |Sub(A)| = 326, that is,
sigma(A) = |Sub(A)|*2^(8-|A|) =  81.5000000000000000 .
         Few subuniverses, Sub-subcase (8/a) above is excluded.

~~Case (8) u:=b+c=i, v:=c*d=f and w:=d+g=a, 
           then b+g=b+e+g=b+c=i and since not (8/a),
          a*b=e
~~~~Subcase (8/b) suppose, for contradiction, that e+d=:t<a is
             new; then c+t=a and t+g=e+d+g=e+=e+a=a.
|A|=10, A(without commas)={oiabcdefgt}. Constraints:
e+g=c b*c=e d*e=f c+d=a a+b=i f*g=o
 b+c=i c*d=f d+g=a   b+g=i  a*b=e
  e+d=t  c+t=a t+g=a
Result for A=Case8b:  |Sub(A)| = 325, that is,
sigma(A) = |Sub(A)|*2^(8-|A|) =  81.2500000000000000 .
      Few subuniverses, Case (8/b) above is excluded.
      Thus, after that Cases (8/a) and (8/b) are excluded, 
      we have that a*b=e and e+d=a. 
      We are in Case (8), so the two "pentagons" are sublattices.
      Since b*d=b*a*d=e*d=f and b+d=b+e+d=b+a=i, we conclude that 
      {a,b,c,d,e,f,i} is a sublattice of L, not only that of F_0,
      that is
 (#1) a+b=i a*b=e, b+c=i b*c=e b+d=i b*d=f, d+e=a d*e=f

~~Case (8) b+c=i, c*d=f and d+g=a, then b+g=i, and we know that 
      (#1) a+b=i a*b=e, b+c=i b*c=e b+d=i b*d=f, d+e=a d*e=f
          Note that the two lines above are not disjoint 
          but this is not a problem.
~~~~Subcase (8/c): f+g=:t is new. 
                   Then e+t=c, b+t=b+e+t=b+c=i and t+d=t+g+d=a.
|A|=10, A(without commas)={oiabcdefgt}. Constraints:
e+g=c b*c=e d*e=f c+d=a a+b=i f*g=o
 b+c=i c*d=f d+g=a  b+g=i
   a+b=i a*b=e  b+c=i b*c=e b+d=i b*d=f  d+e=a d*e=f
    f+g=t  e+t=c b+t=i  t+d=a
Result for A=Case8c:  |Sub(A)| = 294, that is,
sigma(A) = |Sub(A)|*2^(8-|A|) =  73.5000000000000000 .
  Few subuniverses, Subcase (8/c) above is excluded.
  So, in addition to the equalities in Case (8) and (#1), we have
    (#2) f+g=c   
  Armed with these equalities, we continue with

~~~~Subcase (8/d): e*g=:t>o. (Observe that f*t=o.)
|A|=10, A(without commas)={oiabcdefgt}. Constraints:
e+g=c b*c=e d*e=f c+d=a a+b=i f*g=o
 b+c=i c*d=f d+g=a  b+g=i
   a+b=i a*b=e  b+c=i b*c=e b+d=i b*d=f  d+e=a d*e=f
    f+g=c
     e*g=t  f*t=o
Result for A=Case8d:  |Sub(A)| = 293, that is,
sigma(A) = |Sub(A)|*2^(8-|A|) =  73.2500000000000000 .
   Few subuniverses, Subcase (8d) above is excluded. So,  e*g=o,
   and now we have three pentagons. That is, in addition to (#1),
   and Case (8), now {o,f,g,e,c} is a pentagon, that is,
 (#3) f+g=c  f*g=o e+g=c e*g=o, and we also have b*g=b*c*g=e*g=o

~~~~Subcase (8/e): d*g=:t>o, in addition to Case (8), 
                   (#1) and (#3). Then
    b*g=b*c*g=e*g=o, f*t=f*g*t=o, e*t=e*g*t=o,
    b*t=b*g*t=b*c*g*t=e*g*t=o   holds.
|A|=10, A(without commas)={oiabcdefgt}. Constraints:
e+g=c b*c=e d*e=f c+d=a a+b=i f*g=o
 b+c=i c*d=f d+g=a  b+g=i
  a+b=i a*b=e  b+c=i b*c=e b+d=i b*d=f  d+e=a d*e=f
   f+g=c  f*g=o e+g=c e*g=o   b*g=o
    d*g=t   b*g=o f*t=o e*t=o b*t=o
Result for A=Case8e:  |Sub(A)| = 276, that is,
sigma(A) = |Sub(A)|*2^(8-|A|) =  69.0000000000000000 .
     Few subuniverses, Subcase (8e) is excluded, whereby d*g=o

~~~~Subcase (8/f): d*g=o since Subcase (8e) is excluded and the 
   equalities from Case (8), (#1) and (#3) hold.
   (Note that Subcase (8f) deals with the situation where F_0 is
    a sublattice, not just a subposet of L.)
|A|=10, A(without commas)={oiabcdefgt}. Constraints:
e+g=c b*c=e d*e=f c+d=a a+b=i f*g=o
 b+c=i c*d=f d+g=a  b+g=i
  a+b=i a*b=e  b+c=i b*c=e b+d=i b*d=f  d+e=a d*e=f
   f+g=c  f*g=o e+g=c e*g=o   b*g=o
    d*g=o
Result for A=Case8f:  |Sub(A)| = 332, that is,
sigma(A) = |Sub(A)|*2^(8-|A|) =  83.0000000000000000 .
       Few subuniverses, Subcase (8f) above is excluded.  
       Thus, it is excluded that F_0 is a subposet of L.  Q.e.d.

The computation took 140/1000 seconds.
\end{verbatim}
\normalfont

\clearpage
\def\appendixhead{{Cz\'edli: Eighty-three sublattices / Appendix: $F_1$}}
\markboth\appendixhead\appendixhead
\centerline{\bf{{\LARGE Appendix: $F_1$}}}
\normalfont
\begin{verbatim}
Version of January 2, 2019; reformatted May 7, 2019
In the paper, see the corollary of the
Encapsulated 2-ladder Lemma.

|A|=10, A(without commas)={ioabcdefgj}. Constraints:
(encapsulated 2-ladder) e*f=b c+d=g b*c=a f+g=j a*d=o e+j=i
(encapsulted 2-ladder) b+c=f f*g=c
  (then) b+g=j ; since b+g=b+c+g=f+g
  (and)  b*g=a ; since b*g=b*f*g=b*c
  (and)  c*e=a ; since c*e=c*f*e=c*b
  (and)  f+d=j ; since f+d=f+c+d=f+g
Result for A=(F_1^-):  |Sub(A)| = 327, that is,
sigma(A) = |Sub(A)|*2^(8-|A|) =  81.7500000000000000 .
Few subuniverses. This settles F_1. Q.e.d.

The computation took 0/1000 seconds.
\end{verbatim}
\normalfont

\clearpage
\def\appendixhead{{Cz\'edli: Eighty-three sublattices / Appendix: $G_0$}}
\markboth\appendixhead\appendixhead
\centerline{\bf{{\LARGE Appendix: $G_0$}}}
\normalfont

\begin{verbatim}
Version of Nov, 27, 2018; reformatted on May 6, 2019
G_0 from Kelly-Rival: "Planar lattices"; its edges are
      oa ob ac ad ag  bd ce de df eh ej fj gj hi ji
  Here we prove that G_0 is not a subposet of L. 
  Suppose the contrary.
As usual, we can and will assume that a+b=i and h*j=o.
Decreasing b to c+d+e if necessary, 
we assume also that b=c+d+e.
Then increasing c to a*b, we can assume that a*b=c. 
Similarly, after increasing g if necessary, we can 
assume that c*d=g. However, then
a*d=a*(b*d)=(a*b)*d=c*d=g,  so we have that a*d=g. 
Next, after increasing h if necessary, we have that 
h=f*g*e. Finally, after increasing o, h*j=o .
Thus, the permanent assumption throughout is
(#0): a+b=i a*b=c c*d=g a*d=g h*j=o, and keep in mind that
(#1): b=c+d+e  h=f*g*e

**(Case 1)[of 3 by #1] c+d=b, then a+d=a+c+d=a+b=i
                            this gives 133
****(Sub1case 1a) f+g=c, then f+d=f+g+d=c+d=c; 
                            this gives 106.75
******(Sub2case 1a1) h+j=g, then f+j=f+h+j=f+g=c; 
                            this gives 96.25
********(Sub3case 1a1a)[of 3 by #1] f*g=h, 
                then f*d=f*c*d=f*g=h; giving 86.25
**********(Sub4case 1a1a1) f*e=h
|A|=11, A(without commas)={oiabcdefghj}. Constraints:
a+b=i  a*b=c  c*d=g  a*d=g  h*j=o;(#0); this gives 150.5
 c+d=b  a+d=i;(Case 1)[of 3]
  f+g=c   f+d=c;(Sub1case 1a)
   h+j=g  f+j=c;(Sub2case 1a1)
    f*g=h  f*d=h;(Sub3case 1a1a)
     f*e=h;(Sub4case 1a1a1)
Result for A=Case-1a1a1:  |Sub(A)| = 650, that is,
sigma(A) = |Sub(A)|*2^(8-|A|) =  81.2500000000000000 .
     Few subuniverses, (Sub4case 1a1a1) is excluded.

**(Case 1)[of 3 by #1] c+d=b, then a+d=a+c+d=a+b=i; 
                              this gives 133
****(Sub1case 1a) f+g=c, then f+d=f+g+d=c+d=c; 
                              this gives 106.75
******(Sub2case 1a1) h+j=g, then f+j=f+h+j=f+g=c;
                            this gives 96.25
********(Sub3case 1a1a)[of 3 by #1] f*g=h, 
                  then f*d=f*c*d=f*g=h; giving 86.25
**********(Sub4case 1a1a2) f*e:=u> h
|A|=12, A(without commas)={oiabcdefghju}. Constraints:
a+b=i  a*b=c  c*d=g  a*d=g  h*j=o;(#0); this gives 150.5
 c+d=b  a+d=i;(Case 1)[of 3]
  f+g=c   f+d=c;(Sub1case 1a)
   h+j=g  f+j=c;(Sub2case 1a1)
    f*g=h  f*d=h;(Sub3case 1a1a)
     f*e=u;(Sub4case 1a1a2)
Result for A=Case-1a1a2:  |Sub(A)| = 1284, that is,
sigma(A) = |Sub(A)|*2^(8-|A|) =  80.2500000000000000 .
     Few subuniverses, (Sub4case 1a1a2) is excluded.
     Thus, (Sub3case 1a1a) is excluded

**(Case 1)[of 3 by #1] c+d=b, then a+d=a+c+d=a+b=i; 
                              this gives 133
****(Sub1case 1a) f+g=c, then f+d=f+g+d=c+d=c; 
                         this gives 106.75
******(Sub2case 1a1) h+j=g, then f+j=f+h+j=f+g=c;
                            this gives 96.25
********(Sub3case 1a1b)[of 3 by #1] f*e=h; 
         since 1a1a is excluded, f*g:=u>h and u*e=h
|A|=12, A(without commas)={oiabcdefghju}. Constraints:
a+b=i  a*b=c  c*d=g  a*d=g  h*j=o;(#0); this gives 150.5
 c+d=b  a+d=i;(Case 1)[of 3]
  f+g=c   f+d=c;(Sub1case 1a)
   h+j=g  f+j=c;(Sub2case 1a1)
    f*e=h   f*g=u  u*e=h;(Sub3case 1a1b)
Result for A=Case-1a1b:  |Sub(A)| = 1128, that is,
sigma(A) = |Sub(A)|*2^(8-|A|) =  70.5000000000000000 .
     Few subuniverses, (Sub3case 1a1b) is excluded.

**(Case 1)[of 3 by #1] c+d=b, then a+d=a+c+d=a+b=i; 
                              this gives 133
****(Sub1case 1a) f+g=c, then f+d=f+g+d=c+d=c;
                         this gives 106.75
******(Sub2case 1a1) h+j=g, then f+j=f+h+j=f+g=c;
                            this gives 96.25
********(Sub3case 1a1c)[of 3 by #1] g*e=h;
          since 1a1a is excluded, f*g:=u>h and u*e=h
|A|=12, A(without commas)={oiabcdefghju}. Constraints:
a+b=i  a*b=c  c*d=g  a*d=g  h*j=o;(#0); this gives 150.5
 c+d=b  a+d=i;(Case 1)[of 3]
  f+g=c   f+d=c;(Sub1case 1a)
   h+j=g  f+j=c;(Sub2case 1a1)
    g*e=h   f*g=u  u*e=h;(Sub3case 1a1c)
Result for A=Case-1a1c:  |Sub(A)| = 1084, that is,
sigma(A) = |Sub(A)|*2^(8-|A|) =  67.7500000000000000 .
     Few subuniverses, (Sub3case 1a1c)) is excluded.
     Thus, (Sub2case 1a1) is excluded 

**(Case 1)[of 3 by #1] c+d=b, then a+d=a+c+d=a+b=i;
                              this gives 133
****(Sub1case 1a) f+g=c, then f+d=f+g+d=c+d=c;
                         this gives 106.75
******(Sub2case 1a2) h+j=:v<g;
                         this gives 99.125
********(Sub3case 1a2a)[of 3 by #1] f*g=h; then f*v=h
|A|=12, A(without commas)={oiabcdefghjv}. Constraints:
a+b=i  a*b=c  c*d=g  a*d=g  h*j=o;(#0); this gives 150.5
 c+d=b  a+d=i;(Case 1)[of 3]
  f+g=c   f+d=c;(Sub1case 1a)
   h+j=v;(Sub2case 1a2)
    f*g=h   f*v=h;(Sub3case 1a2a)
Result for A=Case-1a2a:  |Sub(A)| = 1322, that is,
sigma(A) = |Sub(A)|*2^(8-|A|) =  82.6250000000000000 .
     Few subuniverses, (Sub3case 1a2a) is excluded.

**(Case 1)[of 3 by #1] c+d=b, then a+d=a+c+d=a+b=i; 
                              this gives 133
****(Sub1case 1a) f+g=c, then f+d=f+g+d=c+d=c;
                         this gives 106.75
******(Sub2case 1a2) h+j=:v<g;
                         this gives 99.125
********(Sub3case 1a2b)[of 3 by #1] f*e=h;
          since 1a2a is excluded, f*g=:u>h, u*e=h
|A|=13, A(without commas)={oiabcdefghjvu}. Constraints:
a+b=i  a*b=c  c*d=g  a*d=g  h*j=o;(#0); this gives 150.5
 c+d=b  a+d=i;(Case 1)[of 3]
  f+g=c   f+d=c;(Sub1case 1a)
   h+j=v;(Sub2case 1a2)
    f*e=h  f*g=u u*e=h;(Sub3case 1a2b)
Result for A=Case-1a2b:  |Sub(A)| = 2320, that is,
sigma(A) = |Sub(A)|*2^(8-|A|) =  72.5000000000000000 .
     Few subuniverses, (Sub3case 1a2b) is excluded.

**(Case 1)[of 3 by #1] c+d=b, then a+d=a+c+d=a+b=i; 
                              this gives 133
****(Sub1case 1a) f+g=c, then f+d=f+g+d=c+d=c;
                         this gives 106.75
******(Sub2case 1a2) h+j=:v<g; this gives 99.125
********(Sub3case 1a2c)[of 3 by #1] g*e=h;
          since 1a2a is excluded, f*g=:u>h, u*e=h
|A|=13, A(without commas)={oiabcdefghjvu}. Constraints:
a+b=i  a*b=c  c*d=g  a*d=g  h*j=o;(#0); this gives 150.5
 c+d=b  a+d=i;(Case 1)[of 3]
  f+g=c   f+d=c;(Sub1case 1a)
   h+j=v;(Sub2case 1a2)
    g*e=h  f*g=u  u*e=h;(Sub3case 1a2c)
Result for A=Case-1a2c:  |Sub(A)| = 2256, that is,
sigma(A) = |Sub(A)|*2^(8-|A|) =  70.5000000000000000 .
     Few subuniverses, (Sub3case 1a2c) is excluded.
     Thus, (Sub2case 1a2) is excluded.
     Thus, (Sub1case 1a) is excluded.

**(Case 1)[of 3 by #1] c+d=b, then a+d=a+c+d=a+b=i;
                              this gives 133
****(Sub1case 1b) f+g=:w<c, then w*d=g;
                            this gives 107.625
******(Sub2case 1b1) h+j=g, then f+j=f+h+j=f+g=w;
                            this gives 95.875
********(Sub3case 1b1a)[of 3 by #1] f*g=h, then 
          f*d=f*c*d=f*g=h; this gives 84.375
**********(Sub4case 1b1a1) f*e=h
|A|=12, A(without commas)={oiabcdefghjw}. Constraints:
a+b=i  a*b=c  c*d=g  a*d=g  h*j=o;(#0); this gives 150.5
 c+d=b  a+d=i;(Case 1)[of 3]
  f+g=w  w*d=g;(Sub1case 1b)
   h+j=g   f+j=w;(Sub2case 1b1)
    f*g=h   f*d=h;(Sub3case 1b1a)
     f*e=h;(Sub4case 1b1a1)
Result for A=Case-1b1a1:  |Sub(A)| = 1272, that is,
sigma(A) = |Sub(A)|*2^(8-|A|) =  79.5000000000000000 .
     Few subuniverses, (Sub4case 1b1a1) is excluded.

**(Case 1)[of 3 by #1] c+d=b, then a+d=a+c+d=a+b=i;
                              this gives 133
****(Sub1case 1b) f+g=:w<c, then w*d=g;
                            this gives 107.625
******(Sub2case 1b1) h+j=g, then f+j=f+h+j=f+g=w;
                            this gives 95.875
********(Sub3case 1b1a)[of 3 by #1] f*g=h, then 
          f*d=f*c*d=f*g=h; this gives 84.375
**********(Sub4case 1b1a2) f*e=u>h
|A|=13, A(without commas)={oiabcdefghjwu}. Constraints:
a+b=i  a*b=c  c*d=g  a*d=g  h*j=o;(#0); this gives 150.5
 c+d=b  a+d=i;(Case 1)[of 3]
  f+g=w  w*d=g;(Sub1case 1b)
   h+j=g   f+j=w;(Sub2case 1b1)
    f*g=h   f*d=h;(Sub3case 1b1a)
     f*e=u;(Sub4case 1b1a2)
Result for A=Case-1b1a2:  |Sub(A)| = 2499, that is,
sigma(A) = |Sub(A)|*2^(8-|A|) =  78.0937500000000000 .
     Few subuniverses, (Sub4case 1b1a2) is excluded.
     Thus,  (Sub3case 1b1a) is excluded.

**(Case 1)[of 3 by #1] c+d=b, then a+d=a+c+d=a+b=i;
                              this gives 133
****(Sub1case 1b) f+g=:w<c, then w*d=g;
                            this gives 107.625
******(Sub2case 1b1) h+j=g, then f+j=f+h+j=f+g=w;
                            this gives 95.875
********(Sub3case 1b1b)[of 3 by #1]  f*e=h; 
          since 1b1a is excluded, f*g:=u>h and u*e=h
|A|=13, A(without commas)={oiabcdefghjwu}. Constraints:
a+b=i  a*b=c  c*d=g  a*d=g  h*j=o;(#0); this gives 150.5
 c+d=b  a+d=i;(Case 1)[of 3]
  f+g=w  w*d=g;(Sub1case 1b)
   h+j=g   f+j=w;(Sub2case 1b1)
    f*e=h  f*g=u  u*e=h;(Sub3case 1b1b)
Result for A=Case-1b1b:  |Sub(A)| = 2230, that is,
sigma(A) = |Sub(A)|*2^(8-|A|) =  69.6875000000000000 .
     Few subuniverses,(Sub3case 1b1b) is excluded.

**(Case 1)[of 3 by #1] c+d=b, then a+d=a+c+d=a+b=i;
                              this gives 133
****(Sub1case 1b) f+g=:w<c, then w*d=g;
                            this gives 107.625
******(Sub2case 1b1) h+j=g, then f+j=f+h+j=f+g=w;
                            this gives 95.875
********(Sub3case 1b1c)[of 3 by #1]  g*e=h;
          since 1b1a is excluded, f*g:=u>h and u*e=h
|A|=13, A(without commas)={oiabcdefghjwu}. Constraints:
a+b=i  a*b=c  c*d=g  a*d=g  h*j=o;(#0); this gives 150.5
 c+d=b  a+d=i;(Case 1)[of 3]
  f+g=w  w*d=g;(Sub1case 1b)
   h+j=g   f+j=w;(Sub2case 1b1)
    g*e=h  f*g=u  u*e=h;(Sub3case 1b1c))
Result for A=Case-1b1c:  |Sub(A)| = 2148, that is,
sigma(A) = |Sub(A)|*2^(8-|A|) =  67.1250000000000000 .
     Few subuniverses,(Sub3case 1b1c) is excluded.
     Thus, (Sub2case 1b1) is excluded.

**(Case 1)[of 3 by #1] c+d=b, then a+d=a+c+d=a+b=i; 
                              this gives 133
****(Sub1case 1b) f+g=:w<c, then w*d=g; 
                            this gives 107.625
******(Sub2case 1b2)   h+j=:v<g this gives 99.9375
********(Sub3case 1b2a)[of 3 by #1] f*g=h; then f*v=h
|A|=13, A(without commas)={oiabcdefghjwv}. Constraints:
a+b=i  a*b=c  c*d=g  a*d=g  h*j=o;(#0); this gives 150.5
 c+d=b  a+d=i;(Case 1)[of 3]
  f+g=w  w*d=g;(Sub1case 1b)
   h+j=v;(Sub2case 1b2)
    f*g=h  f*v=h;(Sub3case 1b2a)
Result for A=Case-1b2a:  |Sub(A)| = 2622, that is,
sigma(A) = |Sub(A)|*2^(8-|A|) =  81.9375000000000000 .
     Few subuniverses,(Sub3case 1b2a) is excluded.

**(Case 1)[of 3 by #1] c+d=b, then a+d=a+c+d=a+b=i; 
                              this gives 133
****(Sub1case 1b) f+g=:w<c, then w*d=g; 
                            this gives 107.625
******(Sub2case 1b2)   h+j=:v<g this gives 99.9375
********(Sub3case 1b2b)[of 3 by #1] f*e=h; 
          since 1b2a is excluded, f*g=:u>h, u*e=h
|A|=14, A(without commas)={oiabcdefghjwvu}. Constraints:
a+b=i  a*b=c  c*d=g  a*d=g  h*j=o;(#0); this gives 150.5
 c+d=b  a+d=i;(Case 1)[of 3]
  f+g=w  w*d=g;(Sub1case 1b)
   h+j=v;(Sub2case 1b2)
    f*e=h  f*g=u u*e=h;(Sub3case 1b2b)
Result for A=Case-1b2b:  |Sub(A)| = 4626, that is,
sigma(A) = |Sub(A)|*2^(8-|A|) =  72.2812500000000000 .
     Few subuniverses,(Sub3case 1b2b) is excluded.

**(Case 1)[of 3 by #1] c+d=b, then a+d=a+c+d=a+b=i; 
                              this gives 133
****(Sub1case 1b) f+g=:w<c, then w*d=g; this gives 107.625
******(Sub2case 1b2)   h+j=:v<g this gives 99.9375
********(Sub3case 1b2c)[of 3 by #1] g*e=h; 
          since 1b2a is excluded, f*g=:u>h, u*e=h
|A|=14, A(without commas)={oiabcdefghjwvu}. Constraints:
a+b=i  a*b=c  c*d=g  a*d=g  h*j=o;(#0); this gives 150.5
 c+d=b  a+d=i;(Case 1)[of 3]
  f+g=w  w*d=g;(Sub1case 1b)
   h+j=v;(Sub2case 1b2)
     g*e=h  f*g=u  u*e=h;(Sub3case 1b2c)
Result for A=Case-1b2c:  |Sub(A)| = 4530, that is,
sigma(A) = |Sub(A)|*2^(8-|A|) =  70.7812500000000000 .
     Few subuniverses,(Sub3case 1b2c) is excluded.
     Thus, (Sub2case 1b2) is excluded.
     Thus, (Sub1case 1b) is excluded.
     Thus, (Case 1) is excluded.

**(Case 2)[of 3 by #1] c+e=b, not Case1 gives c+d=:t<b, 
            so t+e=b; giving 108.5
****(Sub1case 2a) f+g=c, then f+d=f+g+d=c+d=t; 
                         this gives 87.5
******(Sub2case 2a1) h+j=g, then f+j=f+h+j=f+g=c
|A|=12, A(without commas)={oiabcdefghjt}. Constraints:
a+b=i  a*b=c  c*d=g  a*d=g  h*j=o;(#0); this gives 150.5
 c+e=b   c+d=t  t+e=b;(Case 2)
  f+g=c   f+d=t;(Sub1case 2a)
   h+j=g  f+j=c;(Sub2case 2a1)
Result for A=Case-2a1:  |Sub(A)| = 1248, that is,
sigma(A) = |Sub(A)|*2^(8-|A|) =  78.0000000000000000 .
     Few subuniverses, (Sub2case 2a1) is excluded.

**(Case 2)[of 3 by #1] c+e=b, not Case1 gives 
     c+d=:t<b, so t+e=b; giving 108.5
****(Sub1case 2a) f+g=c, then f+d=f+g+d=c+d=t; 
                         this gives 87.5
******(Sub2case 2a2)   h+j=:v<g;
|A|=13, A(without commas)={oiabcdefghjtv}. Constraints:
a+b=i  a*b=c  c*d=g  a*d=g  h*j=o;(#0); this gives 150.5
 c+e=b   c+d=t  t+e=b;(Case 2)
  f+g=c   f+d=t;(Sub1case 2a)
   h+j=v;(Sub2case 2a2)
Result for A=Case-2a2:  |Sub(A)| = 2600, that is,
sigma(A) = |Sub(A)|*2^(8-|A|) =  81.2500000000000000 .
     Few subuniverses, (Sub2case 2a2) is excluded.
     Thus, (Sub1case 2a) is excluded.

**(Case 2)[of 3 by #1] c+e=b, not Case1 gives c+d=:t<b, 
    so t+e=b; giving 108.5
****(Sub1case 2b)  f+g=:w<c, then w*d=g; this gives 87.0825
******(Sub2case 2b1) h+j=g, then f+j=f+h+j=f+g=w
|A|=13, A(without commas)={oiabcdefghjtw}. Constraints:
a+b=i  a*b=c  c*d=g  a*d=g  h*j=o;(#0); this gives 150.5
 c+e=b   c+d=t  t+e=b;(Case 2)
  f+g=w   w*d=g;(Sub1case 2b)
   h+j=g  f+j=w;(Sub2case 2b1)
Result for A=Case-2b1:  |Sub(A)| = 2490, that is,
sigma(A) = |Sub(A)|*2^(8-|A|) =  77.8125000000000000 .
     Few subuniverses, (Sub2case 2b1) is excluded.

**(Case 2)[of 3 by #1] c+e=b, not Case1 gives c+d=:t<b, 
    so t+e=b; giving 108.5
****(Sub1case 2b)  f+g=:w<c, then w*d=g; this gives 87.0825
******(Sub2case 2b2) h+j=:v<g
|A|=14, A(without commas)={oiabcdefghjtwv}. Constraints:
a+b=i  a*b=c  c*d=g  a*d=g  h*j=o;(#0); this gives 150.5
 c+e=b   c+d=t  t+e=b;(Case 2)
  f+g=w   w*d=g;(Sub1case 2b)
   h+j=v;(Sub2case 2b2)
Result for A=Case-2b2:  |Sub(A)| = 5174, that is,
sigma(A) = |Sub(A)|*2^(8-|A|) =  80.8437500000000000 .
     Few subuniverses, (Sub2case 2b2) is excluded.
     Thus, (Sub1case 2b) is excluded.
     Thus, (Case 2) is excluded.

**(Case 3)[of 3 by #1] d+e=b, not Case1 gives c+d=:t<b, 
    so t+e=b; giving 110.25
****(Sub1case 3a)  f+g=c, then f+d=f+g+d=c+d=t; 
                          this gives 91.875
******(Sub2case 3a1) h+j=g, then f+j=f+h+j=f+g=c
|A|=12, A(without commas)={oiabcdefghjt}. Constraints:
a+b=i  a*b=c  c*d=g  a*d=g  h*j=o;(#0); this gives 150.5
 d+e=b   c+d=t  t+e=b;*(Case 3)
  f+g=c   f+d=t;(Sub1case 3a)
   h+j=g   f+j=c;(Sub2case 3a1)
Result for A=Case-3a1:  |Sub(A)| = 1312, that is,
sigma(A) = |Sub(A)|*2^(8-|A|) =  82.0000000000000000 .
     Few subuniverses, (Sub2case 3a1) is excluded.

**(Case 3)[of 3 by #1] d+e=b, not Case1 gives c+d=:t<b, 
    so t+e=b; giving 110.25
****(Sub1case 3a)  f+g=c, then f+d=f+g+d=c+d=t; 
                          this gives 91.875
******(Sub2case 3a2)  h+j=:v<g; this gives 85.3125
******(Sub3case 3a2a)  f*e=h
|A|=13, A(without commas)={oiabcdefghjtv}. Constraints:
a+b=i  a*b=c  c*d=g  a*d=g  h*j=o;(#0); this gives 150.5
 d+e=b   c+d=t  t+e=b;*(Case 3)
  f+g=c   f+d=t;(Sub1case 3a)
   h+j=v;(Sub2case 3a2)
    f*e=h;(Sub3case 3a2a)
Result for A=Case-3a2a:  |Sub(A)| = 2466, that is,
sigma(A) = |Sub(A)|*2^(8-|A|) =  77.0625000000000000 .
     Few subuniverses, (Sub3case 3a2a) is excluded.

**(Case 3)[of 3 by #1] d+e=b, not Case1 gives c+d=:t<b, 
    so t+e=b; giving 110.25
****(Sub1case 3a)  f+g=c, then f+d=f+g+d=c+d=t; 
                          this gives 91.875
******(Sub2case 3a2)  h+j=:v<g; this gives 85.3125
******(Sub3case 3a2b)  f*e=x>h
|A|=14, A(without commas)={oiabcdefghjtvx}. Constraints:
a+b=i  a*b=c  c*d=g  a*d=g  h*j=o;(#0); this gives 150.5
 d+e=b   c+d=t  t+e=b;*(Case 3)
  f+g=c   f+d=t;(Sub1case 3a)
   h+j=v;(Sub2case 3a2)
    f*e=x;(Sub3case 3a2b)
Result for A=Case-3a2b:  |Sub(A)| = 5031, that is,
sigma(A) = |Sub(A)|*2^(8-|A|) =  78.6093750000000000 .
     Few subuniverses, (Sub3case 3a2b) is excluded.
     Thus, (Sub2case 3a2) is excluded.    
     (Sub1case 3a) is excluded.

**(Case 3)[of 3 by #1] d+e=b, not Case1 gives c+d=:t<b, 
    so t+e=b; giving 110.25
****(Sub1case 3b) f+g=:w<c, then w*d=g; this gives 89.6875
******(Sub2case 3b1) h+j=g, then f+j=f+h+j=f+g=w
|A|=13, A(without commas)={oiabcdefghjtw}. Constraints:
a+b=i  a*b=c  c*d=g  a*d=g  h*j=o;(#0); this gives 150.5
 d+e=b   c+d=t  t+e=b;*(Case 3)
  f+g=w   w*d=g;(Sub1case 3b)
   h+j=g   f+j=w;(Sub2case 3b1)
Result for A=Case-3b1:  |Sub(A)| = 2558, that is,
sigma(A) = |Sub(A)|*2^(8-|A|) =  79.9375000000000000 .
     Few subuniverses, (Sub2case 3b1) is excluded.

**(Case 3)[of 3 by #1] d+e=b, not Case1 gives c+d=:t<b, 
    so t+e=b; giving 110.25
****(Sub1case 3b) f+g=:w<c, then w*d=g; this gives 89.6875
******(Sub2case 3b2) h+j=:v<g; this gives 83.28125
********(Sub2case 3b2a) f*e=h;
|A|=14, A(without commas)={oiabcdefghjtwv}. Constraints:
a+b=i  a*b=c  c*d=g  a*d=g  h*j=o;(#0); this gives 150.5
 d+e=b   c+d=t  t+e=b;*(Case 3)
  f+g=w   w*d=g;(Sub1case 3b)
   h+j=v;(Sub2case 3b2)
    f*e=h;(Sub2case 3b2a)
Result for A=Case-3b2a:  |Sub(A)| = 4810, that is,
sigma(A) = |Sub(A)|*2^(8-|A|) =  75.1562500000000000 .
     Few subuniverses, (Sub2case 3b2a) is excluded.

**(Case 3)[of 3 by #1] d+e=b, not Case1 gives c+d=:t<b, 
    so t+e=b; giving 110.25
****(Sub1case 3b) f+g=:w<c, then w*d=g; this gives 89.6875
******(Sub2case 3b2) h+j=:v<g; this gives 83.28125
********(Sub2case 3b2b) f*e=:x>h;
|A|=15, A(without commas)={oiabcdefghjtwvx}. Constraints:
a+b=i  a*b=c  c*d=g  a*d=g  h*j=o;(#0); this gives 150.5
 d+e=b   c+d=t  t+e=b;*(Case 3)
  f+g=w   w*d=g;(Sub1case 3b)
   h+j=v;(Sub2case 3b2)
    f*e=x;(Sub2case 3b2b)
Result for A=Case-3b2b:  |Sub(A)| = 9815, that is,
sigma(A) = |Sub(A)|*2^(8-|A|) =  76.6796875000000000 .
     Few subuniverses, (Sub2case 3b2b) is excluded.
     Thus, (Sub2case 3b2) is excluded.
     Thus, (Sub1case 3b) is excluded.
     Thus, (Case 3) is excluded.
     Therefore, G_0 cannot be a subset of L, a contradiction as required.

The computation took 156/1000 seconds.
\end{verbatim}

\normalfont

\clearpage
\def\appendixhead{{Cz\'edli: Eighty-three sublattices / Appendix: $H_0$}}
\markboth\appendixhead\appendixhead

\centerline{\bf{{\LARGE Appendix: $H_0$}}}
\normalfont

\begin{verbatim}
H_0 from Kelly-Rival: "Planar lattices"; its edges are
      oa ob oc ad bd be bh cg df dg eg fi gi hi
Suppose, for contradiction, that  H_0 is a subposet of L
and L has many subuniverses. We will indicate by ' 
where the argument splits into 3 subcases (according
to the Antichain Lemma in the paper); otherwise the
argument splits into two subcases. 

We can always assume (after changing some elements if
necessary) that 
     f*g=d  (always) , and   
     (#1)  g=d+c+e, b=d*e*h,  i=f+g+h, o=a*b*c  
To benefit from the selfduality of H_0, either we assume that 
a+b=d, then (o,a,b,c,d,e,h,g,f,i) \mapsto 
            (i,f,g,h,d,e,c,b,a,o) is a dual automorphism,
or we let a+b=:D and (o,a,b,c,d,e,k,h,g,f,i) \mapsto
                     (i,f,g,h,k,e,d,c,b,a,o) is such.
When possible, each sub-sub-...-case is followed by its dual.
SUBSIZE version Dec 29, 2018 (started at 1:8:43) reports:

|A|=10, A(without commas)={oiabcdefgh}. Constraints:
(always) f*g=d
 (C1)a+b=d
  (C1.a')f+g=i
   (C1.a1')a*b=o
    (C1.a1.a') d+e=g ;then f+e=f+d+e=f+g=i  a+e=a+b+e=d+e=g
       (so) f+e=i a+e=g
     (C1.a1.a1') d*e=b;then a*e=a*d*e=a*b=o  f*e=f*g*e=d*e=b
        (so) a*e=o f*e=b
      (C1.a1.a1.a) d*h=b ; then a*h=a*d*h=a*b=o
         (so)  a*h=o
       (C1.a1.a1.a1) d+c=g; then f+c=f+d+c=f+g=i
          (so)  f+c=i
        (C1.a1.a1.a1.a) e*h=b
Result for A=(H0/C1.a1.a1.a1.a):  |Sub(A)| = 317, that is,
sigma(A) = |Sub(A)|*2^(8-|A|) =  79.2500000000000000 .
  Few subuniverses, (C1.a1.a1.a1.a) is excluded.

|A|=11, A(without commas)={oiabcdefghx}. Constraints:
(always) f*g=d
(C1) a+b=d
 (C1.a')  f+g=i
  (C1.a1') a*b=o
   (C1.a1.a') d+e=g; then f+e=f+d+e=f+g=i and a+e=a+b+e=d+e=g
      (so) f+e=i  a+e=g
    (C1.a1.a1') d*e=b; then a*e=a*d*e=a*b=o f*e=f*g*e=d*e=b
        (so) a*e=o f*e=b
     (C1.a1.a1.a) d*h=b; then a*h=a*d*h=a*b=o
        (so) a*h=o
       (C1.a1.a1.a1)   d+c=g; then f+c=f+d+c=f+g=i
         (so) f+c=i
        (C1.a1.a1.a1.b) e*h=x; e*h=:x>b
Result for A=(H0/C1.a1.a1.a1.b):  |Sub(A)| = 616, that is,
sigma(A) = |Sub(A)|*2^(8-|A|) =  77.0000000000000000 .
  Few subuniverses, (C1.a1.a1.a1.b) is excluded.
  Thus, (C1.a1.a1.a1) is excluded

|A|=11, A(without commas)={oiabcdefghy}. Constraints:
(always) f*g=d
 (C1) a+b=d
  (C1.a')  f+g=i
   (C1.a1') a*b=o
    (C1.a1.a')   d+e=g; then f+e=f+d+e=f+g=i  a+e=a+b+e=d+e=g
     (so) f+e=i  a+e=g
     (C1.a1.a1') d*e=b; then a*e=a*d*e=a*b=o  f*e=f*g*e=d*e=b
         (so) a*e=o f*e=b
      (C1.a1.a1.a)    d*h=b; then a*h=a*d*h=a*b=o
         (so) a*h=o
       (C1.a1.a1.a2)d+c=y ; d+c=:y<g  then  y+e=g by #1
         (so) y+e=g ; by #1
Result for A=(H0/C1.a1.a1.a2):  |Sub(A)| = 648, that is,
sigma(A) = |Sub(A)|*2^(8-|A|) =  81.0000000000000000 .
  Few subuniverses, (C1.a1.a1.a2) is excluded.
  Thus, (C1.a1.a1.a) is excluded and  so is its dual, 
  since the set of stipulations before (C1.a1.a1.a) is 
  selfdual; these facts will used in (C1.a1.a1.b) below.

|A|=12, A(without commas)={oiabcdefghxy}. Constraints:
(always) f*g=d
 (C1) a+b=d
  (C1.a')   f+g=i
   (C1.a1') a*b=o
    (C1.a1.a')d+e=g; then f+e=f+d+e=f+g=i  a+e=a+b+e=d+e=g
      (so) f+e=i a+e=g
     (C1.a1.a1') d*e=b; then a*e=a*d*e=a*b=o  f*e=f*g*e=d*e=b
       (so) a*e=o f*e=b
      (C1.a1.a1.b) d*h=x d+c=y; then y<g  x*e=b  and y+e=g
         (so)  x*e=b  y+e=g
Result for A=(H0/C1.a1.a1.b):  |Sub(A)| = 1274, that is,
sigma(A) = |Sub(A)|*2^(8-|A|) =  79.6250000000000000 .
 Few subuniverses, (C1.a1.a1.b) is excluded.
 Thus, (C1.a1.a1') is excluded 
 (At present, we cannot say that its dual, (C1.a1.a'), is
 excluded; we can exclude only one of them now.)

|A|=12, A(without commas)={oiabcdefghxy}. Constraints:
(always) f*g=d
 (C1) a+b=d
  (C1.a') f+g=i
   (C1.a1') a*b=o
    (C1.a1.a') d+e=g; then f+e=f+d+e=f+g=i and a+e=a+b+e=d+e=g
      (so)  f+e=i  a+e=g
     (C1.a1.a2') d*e=x d*h=b;then x>b  f*e=f*g*e=d*e and #1
       (gives) f*e=x x*h=b
      (C1.a1.a2.a)  d*h=b; then a*h=a*d*h=a*b=o
        (so) a*h=o
       (C1.a1.a2.a1)  d+c=g; then f+c=f+d+c=f+g=i
         (so) f+c=i
Result for A=(H0/C1.a1.a2.a1):  |Sub(A)| = 1226, that is,
sigma(A) = |Sub(A)|*2^(8-|A|) =  76.6250000000000000 .
      Few subuniverses, (C1.a1.a2.a1) is excluded.

|A|=12, A(without commas)={oiabcdefghxz}. Constraints:
(always) f*g=d
 (C1) a+b=d
  (C1.a') f+g=i
   (C1.a1') a*b=o
    (C1.a1.a') d+e=g; then f+e=f+d+e=f+g=i and a+e=a+b+e=d+e=g
      (so) f+e=i a+e=g
     (C1.a1.a2') d*e=x d*h=b;then x>b f*e=f*g*e=d*e=x  x*h=b
       (so) f*e=x x*h=b
      (C1.a1.a2.a)   d*h=b; then a*h=a*d*h=a*b=o
       (C1.a1.a2.a2)  d+c=z; then d+c=:z<g and e+z=g by (#1}
        (so) e+z=g
Result for A=(H0/C1.a1.a2.a2):  |Sub(A)| = 1218, that is,
sigma(A) = |Sub(A)|*2^(8-|A|) =  76.1250000000000000 .
 Few subuniverses, (C1.a1.a2.a2) is excluded. Thus,
 (C1.a1.a2.a) is excluded and dually; this will be used
                 in (C1.a1.a2.b) below

|A|=13, A(without commas)={oiabcdefghxuv}. Constraints:
(always) f*g=d
 (C1) a+b=d
  (C1.a') f+g=i
   (C1.a1') a*b=o
    (C1.a1.a') d+e=g; then f+e=f+d+e=f+g=i and a+e=a+b+e=d+e=g
      (so) f+e=i a+e=g
     (C1.a1.a2') d*e=x d*h=b;then x>b f*e=f*g*e=d*e=x  x*h=b
       (so) f*e=x x*h=b
      (C1.a1.a2.b) d*h=u d+c=v; d*h=:u>b  d+c=:v<g
        (then #1 gives) u*e=b v+e=g
Result for A=(H0/C1.a1.a2.b):  |Sub(A)| = 2123, that is,
sigma(A) = |Sub(A)|*2^(8-|A|) =  66.3437500000000000 .
      Few subuniverses, (C1.a1.a2.b) is excluded.
      Thus, (C1.a1.a2') is excluded. 

|A|=12, A(without commas)={oiabcdefghxy}. Constraints:
(always) f*g=d
 (C1) a+b=d
  (C1.a') f+g=i
   (C1.a1') a*b=o
    (C1.a1.a') d+e=g; then f+e=f+d+e=f+g=i and a+e=a+b+e=d+e=g
      (so) f+e=i a+e=g
     (C1.a1.a3') d*e=x d*h=y e*h=b;then x>b  y>b and
       (#1 gives) x*h=b  y*e=b  x*y=b
Result for A=(H0/C1.a1.a3'):  |Sub(A)| = 1202, that is,
sigma(A) = |Sub(A)|*2^(8-|A|) =  75.1250000000000000 .
 Few subuniverses, (C1.a1.a3') is excluded. So (C1.a1.a') is
 excluded and dually; to be used in (C1.a1.b'), (C1.a1.c').

|A|=12, A(without commas)={oiabcdefghuv}. Constraints:
(always) f*g=d
 (C1) a+b=d
  (C1.a') f+g=i
   (C1.a1') a*b=o
    (C1.a1.b')d+e=v d+c=g d*e=u d*h=b;
      (then) v+c=g   u*h=b ; by #1
      (and) a+e=v f+c=i; since a+e=a+b+e=d+e  f+c=f+d+c=f+g
      (and) f*e=u a*h=o; since f*e=f*g*e=d*e  a*h=a*d*h=a*b
Result for A=(H0/C1.a1.b'):  |Sub(A)| = 1121, that is,
sigma(A) = |Sub(A)|*2^(8-|A|) =  70.0625000000000000 .
 Few subuniverses, (C1.a1.b') is excluded. This fact and its
 dual will be used below, according to (#1)

|A|=14, A(without commas)={oiabcdefghuvUV}. Constraints:
(always) f*g=d
 (C1) a+b=d
  (C1.a') f+g=i
   (C1.a1') a*b=o
    (C1.a1.c') d+e=v d+c=V e+c=g d*e=u d*h=U e*h=b
      (then) v+c=g e+V=g v+V=g  u*h=b e*U=b u*U=b; by #1
      (and) a+e=v f*e=u;since a+e=a+b+e=d+e=v  f*e=f*g*e=d*e=u
Result for A=(H0/C1.a1.c'):  |Sub(A)| = 3105, that is,
sigma(A) = |Sub(A)|*2^(8-|A|) =  48.5156250000000000 .
  Few subuniverses, (C1.a1.c') is excluded.
  Thus, (C1.a1') is excluded.

|A|=11, A(without commas)={oiabcdefghp}. Constraints:
(always) f*g=d
 (C1) a+b=d
  (C1.a') f+g=i
   (C1.a2') a*b=p  a*c=o; i.e.  a*b=:p>o and a*c=o
     (then) p*c=o;; at present  there is no duality.
    (C1.a2.a') d+e=g; then f+e=f+d+e=f+g=i and a+e=a+b+e=d+e=g
       (gives)  f+e=i a+e=g
     (C1.a2.a1')d*e=b;then a*e=a*d*e=a*b=p and f*e=f*g*e=d*e=b
        (so)  a*e=p  f*e=b
      (C1.a2.a1.a) d*h=b
        (then) a*h=p ; since a*h=a*d*h=a*b=p
Result for A=(H0/C1.a2.a1.a):  |Sub(A)| = 651, that is,
sigma(A) = |Sub(A)|*2^(8-|A|) =  81.3750000000000000 .
  Few subuniverses, (C1.a2.a1.a) is excluded.

|A|=12, A(without commas)={oiabcdefghpx}. Constraints:
(always) f*g=d
 (C1) a+b=d
  (C1.a')  f+g=i
   (C1.a2') a*b=p a*c=o; p>o  there is no duality now
     (then) p*c=o
    (C1.a2.a')d+e=g; then f+e=f+d+e=f+g=i and a+e=a+b+e=d+e=g
      (gives) f+e=i  a+e=g
     (C1.a2.a1') d*e=b;then a*e=a*d*e=a*b=p  f*e=f*g*e=d*e=b
       (give) a*e=p  f*e=b
      (C1.a2.a1.b)  d*h=x; then x>b and x*e=b by #1
        (so)  x*e=b; by #1
Result for A=(H0/C1.a2.a1.b):  |Sub(A)| = 1261, that is,
sigma(A) = |Sub(A)|*2^(8-|A|) =  78.8125000000000000 .
  Few subuniverses, (C1.a2.a1.b) is excluded.
  Thus, (C1.a2.a1') is excluded.

|A|=13, A(without commas)={oiabcdefghpxy}. Constraints:
(always) f*g=d
 (C1) a+b=d
  (C1.a') f+g=i
   (C1.a2') a*b=p a*c=o; p>o and there is no duality;
     (then) p*c=o
    (C1.a2.a') d+e=g;then f+e=f+d+e=f+g=i and a+e=a+b+e=d+e=g
        (give) f+e=i a+e=g
     (C1.a2.a2') d*e=x d*h=b; then x>b
       (and) f*e=x a*h=p;since f*e=f*g*e=d*e  a*h=a*d*h=a*b=p
       (and) x*h=b ; by #1
Result for A=(H0/C1.a2.a2'):  |Sub(A)| = 2384, that is,
sigma(A) = |Sub(A)|*2^(8-|A|) =  74.5000000000000000 .
      Few subuniverses, (C1.a2.a2') is excluded.

|A|=14, A(without commas)={oiabcdefghpxXy}. Constraints:
(always) f*g=d
 (C1) a+b=d
  (C1.a') f+g=i
   (C1.a2') a*b=p a*c=o; p>o  now there is no duality.
    (C1.a2.a') d+e=g;then f+e=f+d+e=f+g=i and a+e=a+b+e=d+e=g
      (so) f+e=i  a+e=g
     (C1.a2.a3') d*e=x d*h=X e*h=b; then d*e=:x>b  d*h=:X>b
       (and) f*e=x x*h=b; since  f*e=f*g*e=d*e=x and x*h=b
       (and)  e*X=b x*X=b;  by #1
Result for A=(H0/C1.a2.a3'):  |Sub(A)| = 4366, that is,
sigma(A) = |Sub(A)|*2^(8-|A|) =  68.2187500000000000 .
      Few subuniverses, (C1.a2.a3') is excluded.
      Thus, (C1.a2.a') is excluded.

|A|=12, A(without commas)={oiabcdefghpv}. Constraints:
(always) f*g=d
 (C1) a+b=d
  (C1.a')  f+g=i
   (C1.a2') a*b=p a*c=o; p>o  no duality
     (and) p*c=o
    (C1.a2.b') d+e=v d+c=g;then v<g
      (and) v+c=g; by #1  and a+e=a+b+e=d+e=v  f+c=f+d+c=f+g=i
      (gives) a+e=v f+c=i
     (C1.a2.b1) p+c=g;
       (then) a+c=g b+c=g e+c=g
      (C1.a2.b1.a) f*h=b
        (then) d*h=b
Result for A=(H0/C1.a2.b1.a):  |Sub(A)| = 1139, that is,
sigma(A) = |Sub(A)|*2^(8-|A|) =  71.1875000000000000 .
      Few subuniverses, (C1.a2.b1.a)  is excluded.

|A|=13, A(without commas)={oiabcdefghpvx}. Constraints:
(always) f*g=d
 (C1) a+b=d
  (C1.a') f+g=i
   (C1.a2') a*b=p a*c=o;  then p>o  there is no duality
     (and)  p*c=o
    (C1.a2.b') d+e=v  d+c=g; then v<g  #1
      (gives) v+c=g; and a+e=a+b+e=d+e=v  f+c=f+d+c=f+g=i
      (yields) a+e=v f+c=i
     (C1.a2.b1) p+c=g
       (then) a+c=g b+c=g e+c=g
      (C1.a2.b1.b) f*h=x ; then x>b
Result for A=(H0/C1.a2.b1.b):  |Sub(A)| = 2577, that is,
sigma(A) = |Sub(A)|*2^(8-|A|) =  80.5312500000000000 .
   Few subuniverses, (C1.a2.b1.b)  is excluded.
   Thus,  (C1.a2.b1) is excluded.

|A|=14, A(without commas)={oiabcdefghpvxy}. Constraints:
(always) f*g=d
 (C1) a+b=d
  (C1.a') f+g=i
   (C1.a2') a*b=p a*c=o; then p>o  there is no duality
     (and) p*c=o
    (C1.a2.b') d+e=v d+c=g;then v<g
      (and) v+c=g; and a+e=a+b+e=d+e=v  f+c=f+d+c=f+g=i
      (that is) a+e=v  f+c=i
     (C1.a2.b2) p+c=y; then y<g
      (C1.a2.b2.a) f*h=b;
        (then) d*h=b
Result for A=(H0/C1.a2.b2.a):  |Sub(A)| = 4570, that is,
sigma(A) = |Sub(A)|*2^(8-|A|) =  71.4062500000000000 .
   Few subuniverses, (C1.a2.b2.a)  is excluded.

|A|=14, A(without commas)={oiabcdefghpvxy}. Constraints:
(always) f*g=d
 (C1) a+b=d
  (C1.a') f+g=i
   (C1.a2') a*b=p a*c=o; then p>o  there is no duality
     (and) p*c=o
    (C1.a2.b') d+e=v d+c=g;then v<g
      (and) v+c=g; and a+e=a+b+e=d+e=v  f+c=f+d+c=f+g=i
      (that is) a+e=v  f+c=i
     (C1.a2.b2) p+c=y; then y<g
      (C1.a2.b2.b) f*h=x; then x>b
Result for A=(H0/C1.a2.b2.b):  |Sub(A)| = 5143, that is,
sigma(A) = |Sub(A)|*2^(8-|A|) =  80.3593750000000000 .
      Few subuniverses, (C1.a2.b2.b)  is excluded.
      Thus,  (C1.a2.b2) is excluded.   
      Thus,  (C1.a2.b') is excluded. 


 (C1) a+b=d (in addition to f*g=d)%
  (C1.a')   f+g=i
   (C1.a2') a*b=:p>o and a*c=o,  then p*c=o  
       (At present, there is no duality.)
    (C1.a2.c') d+e=:v<g, d+c=:V<g and e+c=g, then 
      a+e=a+b+e=d+e=v, and (#1) gives
         v+c=g, e+V=g, and v+V=g              
|A|=13, A(without commas)={oiabcdefghpvV}. Constraints:
(always) f*g=d
 a+b=d
  f+g=i
   a*b=p  a*c=o   p*c=o
    d+e=v d+c=V e+c=g   a+e=v v+c=g e+V=g v+V=g
Result for A=(H0/C1.a2.c'):  |Sub(A)| = 2354, that is,
sigma(A) = |Sub(A)|*2^(8-|A|) =  73.5625000000000000 .
      Few subuniverses, (C1.a2.c')  is excluded.
      Thus, (C1.a2') is excluded.

 (C1) a+b=d (in addition to f*g=d)%
  (C1.a')   f+g=i
   (C1.a3') a*b=:p>o, a*c=:P>o and b*c=o  
      (At present, there is no duality.) Then
            p*c=o, P*b=o and p*P=o
    (C1.a3.a') d+e=g, then f+e=f+d+e=f+g=i, a+e=a+b+e=d+e=g 
     (C1.a3.a1') d*e=b, then a*e=a*d*e=a*b=p, f*e=f*g*e=d*e=b 
|A|=12, A(without commas)={oiabcdefghpP}. Constraints:
(always) f*g=d
 a+b=d
  f+g=i
   a*b=p a*c=P b*c=o    p*c=o P*b=o p*P=o
    d+e=g   f+e=i  a+e=g
     d*e=b   a*e=p f*e=b
Result for A=(H0/C1.a3.a1'):  |Sub(A)| = 1218, that is,
sigma(A) = |Sub(A)|*2^(8-|A|) =  76.1250000000000000 .
      Few subuniverses, (C1.a3.a1')  is excluded.

 (C1) a+b=d (in addition to f*g=d)%
  (C1.a')   f+g=i
   (C1.a3') a*b=:p>o, a*c=:P>o and b*c=o  (At present, there
       is no duality.) Then  p*c=o, P*b=o and p*P=o
    (C1.a3.a') d+e=g, then f+e=f+d+e=f+g=i, a+e=a+b+e=d+e=g 
     (C1.a3.a2') d*e=:x>b and d*h=b, then f*e=f*g*e=d*e=x, 
           a*h=a*d*h=a*b=p, and x*h=b by (#1) % 
|A|=13, A(without commas)={oiabcdefghpPx}. Constraints:
(always) f*g=d
 a+b=d
  f+g=i
   a*b=p a*c=P b*c=o    p*c=o P*b=o p*P=o
    d+e=g   f+e=i  a+e=g
     d*e=x d*h=b   f*e=x a*h=p x*h=b
Result for A=(H0/C1.a3.a2'):  |Sub(A)| = 1882, that is,
sigma(A) = |Sub(A)|*2^(8-|A|) =  58.8125000000000000 .
      Few subuniverses, (C1.a3.a2') is excluded.

 (C1) a+b=d (in addition to f*g=d)%
  (C1.a')   f+g=i
   (C1.a3') a*b=:p>o, a*c=:P>o and b*c=o  (At present, 
 there is no duality.) Then p*c=o, P*b=o and p*P=o
    (C1.a3.a') d+e=g, then f+e=f+d+e=f+g=i, a+e=a+b+e=d+e=g 
     (C1.a3.a3') d*e=:x>b, d*h=:X>b and e*h=b, 
       then f*e=f*g*e=d*e=x, and x*h=b e*X=b and x*X=b by #1 
|A|=14, A(without commas)={oiabcdefghpPxX}. Constraints:
(always) f*g=d
 a+b=d
  f+g=i
   a*b=p a*c=P b*c=o    p*c=o P*b=o p*P=o
    d+e=g   f+e=i  a+e=g
     d*e=x d*h=X e*h=b   f*e=x x*h=b e*X=b x*X=b
Result for A=(H0/C1.a3.a3'):  |Sub(A)| = 3114, that is,
sigma(A) = |Sub(A)|*2^(8-|A|) =  48.6562500000000000 .
      Few subuniverses, (C1.a3.a3') is excluded.
      Thus, (C1.a3.a') is excluded.

 (C1) a+b=d (in addition to f*g=d)%
  (C1.a')   f+g=i
   (C1.a3') a*b=:p>o, a*c=:P>o and b*c=o. (At present, there
     is no duality.) Then p*c=o, P*b=o and p*P=o.
    (C1.a3.b') d+e=:v<g and d+c=g, then v+c=g by (#1), 
      a+e=a+b+e=d+e=v, f+c=f+d+c=f+g=i 
|A|=13, A(without commas)={oiabcdefghpPv}. Constraints:
(always) f*g=d
 a+b=d
  f+g=i
   a*b=p a*c=P b*c=o    p*c=o P*b=o p*P=o
    d+e=v d+c=g   v+c=g a+e=v f+c=i
Result for A=(H0/C1.a3.b'):  |Sub(A)| = 2428, that is,
sigma(A) = |Sub(A)|*2^(8-|A|) =  75.8750000000000000 .
      Few subuniverses, (C1.a3.b') is excluded.

 (C1) a+b=d (in addition to f*g=d)%
  (C1.a')   f+g=i
   (C1.a3') a*b=:p>o, a*c=:P>o and b*c=o (At present, there
     is no duality.) Then p*c=o, P*b=o and p*P=o.
    (C1.a3.c') d+e=:v<g, d+c=:V<g and e+c=g, then 
      a+e=a+b+e=d+e=v, and (#1) gives v+c=g, e+V=g, and v+V=g 
|A|=14, A(without commas)={oiabcdefghpPvV}. Constraints:
(always) f*g=d
 a+b=d
  f+g=i
   a*b=p a*c=P b*c=o    p*c=o P*b=o p*P=o
    d+e=v d+c=V e+c=g   a+e=v v+c=g e+V=g v+V=g
Result for A=(H0/C1.a3.c'):  |Sub(A)| = 3856, that is,
sigma(A) = |Sub(A)|*2^(8-|A|) =  60.2500000000000000 .
 Few subuniverses, (C1.a3.c') is excluded.
 Thus, (C1.a3') is excluded.
 Thus, (C1.a') is excluded. Its dual is also excluded; 
 (C1.b') and (C1.c') will use this fact. 

 (C1) a+b=d (in addition to f*g=d)%
  (C1.b') f+g=:T<i, a*b=:t>o, and f+h=i; then T+h=i, t*c=o
    and there is no duality
   (C1.b1')  d+e=g, then f+e=f+d+e=f+g=T and a+e=a+b+e=d+e=g
    (C1.b1.a') d*e=b, then a*e=a*d*e=a*b=t, f*e=f*g*e=d*e=b
|A|=12, A(without commas)={oiabcdefghTt}. Constraints:
(always) f*g=d
 a+b=d
  f+g=T a*b=t  f+h=i  T+h=i  t*c=o
   d+e=g  f+e=T  a+e=g
    d*e=b  a*e=t f*e=b
Result for A=(H0/C1.b1.a'):  |Sub(A)| = 1310, that is,
sigma(A) = |Sub(A)|*2^(8-|A|) =  81.8750000000000000 .
      Few subuniverses, (C1.b1.a') is excluded.

 (C1) a+b=d (in addition to f*g=d)%
  (C1.b') f+g=:T<i, a*b=:t>o, and f+h=i; then T+h=i, t*c=o 
     and there is no duality
   (C1.b1')  d+e=g, then f+e=f+d+e=f+g=T and a+e=a+b+e=d+e=g
    (C1.b1.b') d*e=:x>b and d*h=b, then 
       f*e=f*g*e=d*e=x, and x*h=b 
|A|=13, A(without commas)={oiabcdefghTtx}. Constraints:
(always) f*g=d
 a+b=d
  f+g=T a*b=t  f+h=i  T+h=i t*c=o
   d+e=g  f+e=T  a+e=g
    d*e=x  d*h=b   f*e=x x*h=b
Result for A=(H0/C1.b1.b'):  |Sub(A)| = 2226, that is,
sigma(A) = |Sub(A)|*2^(8-|A|) =  69.5625000000000000 .
    Few subuniverses, (C1.b1.b') is excluded.

 (C1) a+b=d (in addition to f*g=d)%
  (C1.b') f+g=:T<i, a*b=:t>o, and f+h=i; 
    then T+h=i, t*c=o and there is no duality
   (C1.b1')  d+e=g, then f+e=f+d+e=f+g=T and a+e=a+b+e=d+e=g
    (C1.b1.c') d*e=:x>b, d*h=:X>b, and e*h=b, 
      then f*e=f*g*e=d*e=x, x*h=b, X*e=b, and x*X=b 
|A|=14, A(without commas)={oiabcdefghTtxX}. Constraints:
(always) f*g=d
 a+b=d
  f+g=T a*b=t  f+h=i  T+h=i t*c=o
   d+e=g  f+e=T  a+e=g
    d*e=x d*h=X e*h=b   f*e=x x*h=b  X*e=b  x*X=b
Result for A=(H0/C1.b1.c'):  |Sub(A)| = 3602, that is,
sigma(A) = |Sub(A)|*2^(8-|A|) =  56.2812500000000000 .
      Few subuniverses, (C1.b1.c') is excluded.
      Thus, (C1.b1') is excluded.       

 (C1) a+b=d (in addition to f*g=d)%
  (C1.b') f+g=:T<i, a*b=:t>o, and f+h=i; 
     then T+h=i, t*c=o and there is no duality
   (C1.b2') d+e=:v<g and d+c=g, then v+c=g by (#1), 
         a+e=a+b+e=d+e=v, f+c=f+d+c=f+g=T
|A|=13, A(without commas)={oiabcdefghTtv}. Constraints:
(always) f*g=d
 a+b=d
  f+g=T a*b=t  f+h=i  T+h=i t*c=o
   d+e=v d+c=g   v+c=g  a+e=v f+c=T
Result for A=(H0/C1.b2'):  |Sub(A)| = 2527, that is,
sigma(A) = |Sub(A)|*2^(8-|A|) =  78.9687500000000000 .
      Few subuniverses, (C1.b2') is excluded.

 (C1) a+b=d (in addition to f*g=d)%
  (C1.b') f+g=:T<i, a*b=:t>o, and f+h=i; 
      then T+h=i, t*c=o and there is no duality
   (C1.b3') d+e=:v<g, d+c=:V<g and e+c=g, 
      then v+c=g, e+V=g, v+V=g,  a+e=a+b+e=d+e=v
|A|=14, A(without commas)={oiabcdefghTtvV}. Constraints:
(always) f*g=d
 a+b=d
  f+g=T a*b=t  f+h=i  T+h=i t*c=o
   d+e=v d+c=V e+c=g   v+c=g e+V=g v+V=g a+e=v
Result for A=(H0/C1.b3'):  |Sub(A)| = 4038, that is,
sigma(A) = |Sub(A)|*2^(8-|A|) =  63.0937500000000000 .
      Few subuniverses, (C1.b3') is excluded.
      Thus, (C1.b') is excluded.
 Now let us summarize where we are. Together with the 
 automatic f*g=d, (C1) yields a selfdual situation. In this
 selfdual situation, (C1.a') and its dual have been excluded. 
 Since (C1.b) is also excluded, so is its dual. By (#1), two
 of f,g, and h join to i; but neither f+g by (C1.a), nor f+h
 by (C1.b).  By duality, neither a*b, nor a*c is o. This fact 
 explain the form of (C1.c') below.
 

 (C1) a+b=d (in addition to f*g=d)%
  (C1.c') f+g=:T<i, f+h=:S<i,g+h=i,a*b=:t>o,a*c=:s>o, b*c=o, 
    then  T+h=i, g+S=i, T+S=i, t*c=o, b*s=o, t*s=o
|A|=14, A(without commas)={oiabcdefghTtSs}. Constraints:
(always) f*g=d
 a+b=d
  f+g=T f+h=S g+h=i a*b=t a*c=s b*c=o  T+h=i g+S=i T+S=i
  t*c=o b*s=o t*s=o
Result for A=(H0/C1.c'):  |Sub(A)| = 4916, that is,
sigma(A) = |Sub(A)|*2^(8-|A|) =  76.8125000000000000 .
      Few subuniverses, (C1.c') is excluded.
      Thus, (C1) is excluded.
      

 (C2) a+b=:D<d (in addition to f*g=d; selfdual situation)%
  (C2.a')   f+g=i
   (C2.a1') a*b=o  (again, the situation is selfdual)
    (C2.a1.a')   d+e=g, then f+e=f+d+e=f+g=i 
     (C2.a1.a1') D*e=b, then a*e=a*D*e=a*b=o (selfdual)
      (C2.a1.a1.a) D+e=g, then a+e=a+b+e=D+e=g
       (C2.a1.a1.a1) d*e=b, then f*e=f*g*e=d*e=b (selfdual) 
        (C2.a1.a1.a1.a)  D*h=b, then a*h=a*D*h=a*b=o
         (C2.a1.a1.a1.a1) d+c=g, f+c=f+d+c=f+g=i (selfdual)
|A|=11, A(without commas)={oiabcdefghD}. Constraints:
(always) f*g=d
 a+b=D
  f+g=i
   a*b=o
    d+e=g   f+e=i
     D*e=b   a*e=o
      D+e=g   a+e=g
       d*e=b   f*e=b
        D*h=b   a*h=o
         d+c=g   f+c=i
Result for A=(H0/C2.a1.a1.a1.a1):  |Sub(A)| = 605, that is,
sigma(A) = |Sub(A)|*2^(8-|A|) =  75.6250000000000000 .
      Few subuniverses, (C2.a1.a1.a1.a1) is excluded.

 (C2) a+b=:D<d (in addition to f*g=d; selfdual situation
  (C2.a')   f+g=i
   (C2.a1') a*b=o  (again, the situation is selfdual)
    (C2.a1.a')  d+e=g, then f+e=f+d+e=f+g=i 
     (C2.a1.a1') D*e=b, then a*e=a*D*e=a*b=o (selfdual)
      (C2.a1.a1.a) D+e=g, then a+e=a+b+e=D+e=g
       (C2.a1.a1.a1) d*e=b, then f*e=f*g*e=d*e=b (selfdual)
        (C2.a1.a1.a1.a)  D*h=b, then a*h=a*D*h=a*b=o
         (C2.a1.a1.a1.a2) d+c=:u<g, then u+e=g 
|A|=12, A(without commas)={oiabcdefghDu}. Constraints:
(always) f*g=d
 a+b=D
  f+g=i
   a*b=o
    d+e=g   f+e=i
     D*e=b   a*e=o
      D+e=g   a+e=g
       d*e=b   f*e=b
        D*h=b   a*h=o
         d+c=u  u+e=g
Result for A=(H0/C2.a1.a1.a1.a2):  |Sub(A)| = 1203, that is,
sigma(A) = |Sub(A)|*2^(8-|A|) =  75.1875000000000000 .
      Few subuniverses, (C2.a1.a1.a1.a2) is excluded.
      Thus, (C2.a1.a1.a1.a) is excluded.

 (C2) a+b=:D<d (in addition to f*g=d; selfdual situation)
  (C2.a')   f+g=i
   (C2.a1') a*b=o  (again, the situation is selfdual)
    (C2.a1.a')   d+e=g, then f+e=f+d+e=f+g=i 
     (C2.a1.a1') D*e=b, then a*e=a*D*e=a*b=o (selfdual)
      (C2.a1.a1.a) D+e=g, then a+e=a+b+e=D+e=g
       (C2.a1.a1.a1) d*e=b, then f*e=f*g*e=d*e=b (selfdual)
           By duality, together with (C2.a1.a1.a1.a), its
           dual,  d+c=g is also excluded! Thus: 
        (C2.a1.a1.a1.b) D*h=:B>b & d+c=:G<g,then e*B=b,e+G=g
|A|=13, A(without commas)={oiabcdefghDBG}. Constraints:
(always) f*g=d
 a+b=D
  f+g=i
   a*b=o
    d+e=g   f+e=i
     D*e=b   a*e=o
      D+e=g   a+e=g
       d*e=b   f*e=b
        D*h=B d+c=G  e*B=b e+G=g
Result for A=(H0/C2.a1.a1.a1.b):  |Sub(A)| = 2426, that is,
sigma(A) = |Sub(A)|*2^(8-|A|) =  75.8125000000000000 .
      Few subuniverses, (C2.a1.a1.a1.b) is excluded.
      Thus, (C2.a1.a1.a1) is excluded.

 (C2) a+b=:D<d (in addition to f*g=d; selfdual situation)
  (C2.a')   f+g=i
   (C2.a1') a*b=o  (again, the situation is selfdual)
    (C2.a1.a')   d+e=g, then f+e=f+d+e=f+g=i 
     (C2.a1.a1') D*e=b, then a*e=a*D*e=a*b=o (selfdual)
      (C2.a1.a1.a) D+e=g, then a+e=a+b+e=D+e=g
       (C2.a1.a1.a2) d*e=:B, then D*B=b, a*B=a*D*B=a*b=o 
           and f*e=f*g*e=d*e=B % 87.25
        (C2.a1.a1.a2.a) g+h=i
|A|=13, A(without commas)={oiabcdefghDBG}. Constraints:
(always) f*g=d
 a+b=D
  f+g=i
   a*b=o
    d+e=g   f+e=i
     D*e=b   a*e=o
      D+e=g   a+e=g
       d*e=B   D*B=b a*B=o f*e=B
        g+h=i
Result for A=(H0/C2.a1.a1.a2.a):  |Sub(A)| = 2528, that is,
sigma(A) = |Sub(A)|*2^(8-|A|) =  79.0000000000000000 .
      Few subuniverses, (C2.a1.a1.a2.a) is excluded.

 (C2) a+b=:D<d (in addition to f*g=d; selfdual)%
  (C2.a')   f+g=i
   (C2.a1') a*b=o  (again, the situation is selfdual)
    (C2.a1.a')   d+e=g, then f+e=f+d+e=f+g=i 
     (C2.a1.a1') D*e=b, then a*e=a*D*e=a*b=o selfdual)
      (C2.a1.a1.a) D+e=g, then a+e=a+b+e=D+e=g
       (C2.a1.a1.a2) d*e=:B, 
        then D*B=b, a*B=a*D*B=a*b=o and f*e=f*g*e=d*e=B
        (C2.a1.a1.a2.b) g+h=:p<i, then f+p=i
|A|=14, A(without commas)={oiabcdefghDBGp}. Constraints:
(always) f*g=d
 a+b=D
  f+g=i
   a*b=o
    d+e=g   f+e=i
     D*e=b   a*e=o
      D+e=g   a+e=g
       d*e=B   D*B=b a*B=o f*e=B
        g+h=p   f+p=i
Result for A=(H0/C2.a1.a1.a2.b):  |Sub(A)| = 4604, that is,
sigma(A) = |Sub(A)|*2^(8-|A|) =  71.9375000000000000 .
      Few subuniverses, (C2.a1.a1.a2.b) is excluded.
      Thus, (C2.a1.a1.a2) is excluded.
      Thus, (C2.a1.a1.a) is excluded.

 (C2) a+b=:D<d (selfdual situation)
  (C2.a')   f+g=i
   (C2.a1') a*b=o  (again, the situation is selfdual)
    (C2.a1.a')   d+e=g, then f+e=f+d+e=f+g=i 
     (C2.a1.a1') D*e=b, then a*e=a*D*e=a*b=o (selfdual)
      (C2.a1.a1.b) D+e=:x<g, 
         then a+e=a+b+e=D+e=x, d+x=g, f+x=f+d+x=f+g=i
       (C2.a1.a1.b1) d*e=b, then f*e=f*g*e=d*e=b
        (C2.a1.a1.b1.a) g+h=i 

|A|=12, A(without commas)={oiabcdefghDx}. Constraints:
(always) f*g=d
 a+b=D
  f+g=i
   a*b=o
    d+e=g   f+e=i
     D*e=b   a*e=o
      D+e=x   a+e=x  d+x=g f+x=i
       d*e=b   f*e=b
        g+h=i
Result for A=(H0/C2.a1.a1.b1.a):  |Sub(A)| = 1254, that is,
sigma(A) = |Sub(A)|*2^(8-|A|) =  78.3750000000000000 .
      Few subuniverses, (C2.a1.a1.b1.a) is excluded.

 (C2) a+b=:D<d (in addition to f*g=d; selfdual)%
  (C2.a')   f+g=i
   (C2.a1') a*b=o  (again, the situation is selfdual)
    (C2.a1.a')   d+e=g, then f+e=f+d+e=f+g=i 
     (C2.a1.a1') D*e=b, then a*e=a*D*e=a*b=o (selfdual)
      (C2.a1.a1.b) D+e=:x<g, 
        then a+e=a+b+e=D+e=x, d+x=g, f+x=f+d+x=f+g=i 
       (C2.a1.a1.b1) d*e=b, then f*e=f*g*e=d*e=b 
        (C2.a1.a1.b1.b)  g+h=:p<i, then f+p=i
|A|=13, A(without commas)={oiabcdefghDxp}. Constraints:
(always) f*g=d
 a+b=D
  f+g=i
   a*b=o
    d+e=g   f+e=i
     D*e=b   a*e=o
      D+e=x   a+e=x  d+x=g f+x=i
       d*e=b   f*e=b
        g+h=p   f+p=i
Result for A=(H0/C2.a1.a1.b1.b):  |Sub(A)| = 2338, that is,
sigma(A) = |Sub(A)|*2^(8-|A|) =  73.0625000000000000 .
      Few subuniverses, (C2.a1.a1.b1.b) is excluded.
      Thus, (C2.a1.a1.b1) is excluded.

 (C2) a+b=:D<d (in addition to f*g=d; selfdual situation)
  (C2.a')   f+g=i
   (C2.a1') a*b=o  (again, the situation is selfdual)
    (C2.a1.a')   d+e=g, then f+e=f+d+e=f+g=i 
     (C2.a1.a1') D*e=b, then a*e=a*D*e=a*b=o selfdual)
      (C2.a1.a1.b) D+e=:x<g, 
        then a+e=a+b+e=D+e=x, d+x=g, f+x=f+d+x=f+g=i
       (C2.a1.a1.b2) d*e=:y>b, 
        then f*e=f*g*e=d*e=y and D*y=D*e*d=b*d=b
|A|=13, A(without commas)={oiabcdefghDxy}. Constraints:
(always) f*g=d
 a+b=D
  f+g=i
   a*b=o
    d+e=g   f+e=i
     D*e=b   a*e=o
      D+e=x   a+e=x  d+x=g f+x=i
       d*e=y   f*e=y  D*y=b
Result for A=(H0/C2.a1.a1.b2):  |Sub(A)| = 2424, that is,
sigma(A) = |Sub(A)|*2^(8-|A|) =  75.7500000000000000 .
      Few subuniverses, (C2.a1.a1.b2)  is excluded.
      Thus, (C2.a1.a1.b) is excluded.
      Thus, (C2.a1.a1') is excluded.

 (C2) a+b=:D<d (in addition to f*g=d; selfdual situation)
  (C2.a')   f+g=i
   (C2.a1') a*b=o  (again, the situation is selfdual)
    (C2.a1.a')   d+e=g, then f+e=f+d+e=f+g=i 
     (C2.a1.a2') D*e=:x>b and D*h=b, 
        then  x*h=b by (#1) and a*h=a*D*h=a*b=o
      (C2.a1.a2.a) g+h=i
       (C2.a1.a2.a1) f+h=i  
        (C2.a1.a2.a1.a) a*c=o 
|A|=12, A(without commas)={oiabcdefghDx}. Constraints:
(always) f*g=d
 a+b=D
  f+g=i
   a*b=o
    d+e=g   f+e=i
     D*e=x D*h=b   x*h=b  a*h=o
      g+h=i
       f+h=i
        a*c=o
Result for A=C2.a1.a2.a1.a:  |Sub(A)| = 1258, that is,
sigma(A) = |Sub(A)|*2^(8-|A|) =  78.6250000000000000 .
      Few subuniverses, (C2.a1.a2.a1.a)  is excluded.

 (C2) a+b=:D<d (in addition to f*g=d; selfdual situation)
  (C2.a')   f+g=i
   (C2.a1') a*b=o  (again, the situation is selfdual)
    (C2.a1.a')   d+e=g, then f+e=f+d+e=f+g=i 
     (C2.a1.a2') D*e=:x>b and D*h=b,
 then  x*h=b by (#1) and a*h=a*D*h=a*b=o %
      (C2.a1.a2.a) g+h=i
       (C2.a1.a2.a1) f+h=i  
        (C2.a1.a2.a1.b) a*c=:y>o, then y*b=o 
|A|=13, A(without commas)={oiabcdefghDxy}. Constraints:
(always) f*g=d
 a+b=D
  f+g=i
   a*b=o
    d+e=g   f+e=i
     D*e=x D*h=b   x*h=b  a*h=o
      g+h=i
       f+h=i
        a*c=o
         a*c=y    y*b=o
Result for A=(H0/C2.a1.a2.a1.b):  |Sub(A)| = 2107, that is,
sigma(A) = |Sub(A)|*2^(8-|A|) =  65.8437500000000000 .
      Few subuniverses, (C2.a1.a2.a1.b)  is excluded.
      Thus, (C2.a1.a2.a1) is excluded.

 (C2) a+b=:D<d (in addition to f*g=d; selfudal situation)
  (C2.a')   f+g=i
   (C2.a1') a*b=o  (again, the situation is selfdual)
    (C2.a1.a')   d+e=g, then f+e=f+d+e=f+g=i 
     (C2.a1.a2') D*e=:x>b and D*h=b, 
         then  x*h=b by (#1) and a*h=a*D*h=a*b=o %
      (C2.a1.a2.a) g+h=i
       (C2.a1.a2.a2) f+h:=z<i, then z+g=i 
|A|=13, A(without commas)={oiabcdefghDxz}. Constraints:
(always) f*g=d
 a+b=D
  f+g=i
   a*b=o
    d+e=g   f+e=i
     D*e=x D*h=b   x*h=b  a*h=o
      g+h=i
       f+h=z    z+g=i
Result for A=(H0/C2.a1.a2.a2):  |Sub(A)| = 2422, that is,
sigma(A) = |Sub(A)|*2^(8-|A|) =  75.6875000000000000 .
      Few subuniverses, (C2.a1.a2.a2)  is excluded.
      Thus, (C2.a1.a2.a) is excluded.

 (C2) a+b=:D<d (in addition to f*g=d; selfdual situation)%
  (C2.a')   f+g=i
   (C2.a1') a*b=o  (again, the situation is selfdual)
    (C2.a1.a')   d+e=g, then f+e=f+d+e=f+g=i 
     (C2.a1.a2') D*e=:x>b and D*h=b, 
        then  x*h=b by (#1) and a*h=a*D*h=a*b=o %
      (C2.a1.a2.b) g+h:=y<i, then f+y=i 
|A|=13, A(without commas)={oiabcdefghDxy}. Constraints:
(always) f*g=d
 a+b=D
  f+g=i
   a*b=o
    d+e=g   f+e=i
     D*e=x D*h=b   x*h=b  a*h=o
      g+h=y   f+y=i
Result for A=(H0/C2.a1.a2.b):  |Sub(A)| = 2570, that is,
sigma(A) = |Sub(A)|*2^(8-|A|) =  80.3125000000000000 .
      Few subuniverses, (C2.a1.a2.b)  is excluded.
      Thus, (C2.a1.a2')  is excluded.

 (C2) a+b=:D<d (in addition to f*g=d; selfdual situation)
  (C2.a')   f+g=i
   (C2.a1') a*b=o  (again, the situation is selfdual)
    (C2.a1.a')   d+e=g, then f+e=f+d+e=f+g=i 
     (C2.a1.a3') D*e=:x>b, D*h=:y>b and e*h=b, 
        then x*h=b, e*y=b, and x*y=b 
|A|=13, A(without commas)={oiabcdefghDxy}. Constraints:
(always) f*g=d
 a+b=D
  f+g=i
   a*b=o
    d+e=g   f+e=i
     D*e=x D*h=y e*h=b   x*h=b e*y=b x*y=b
Result for A=(H0/C2.a1.a3'):  |Sub(A)| = 2412, that is,
sigma(A) = |Sub(A)|*2^(8-|A|) =  75.3750000000000000 .
 Few subuniverses, (C2.a1.a3')   is excluded.
 Thus, (C2.a1.a') is excluded. Since (C2.a1') describes
 a selfdual situation, the dual of (C2.a1.a')  is also
 excluded, that is, D*e is not b; 
 this fact will be used in (C2.a1.b') and (C2.a1.c').

 (C2) a+b=:D<d (in addition to f*g=d; selfdual situation)
  (C2.a')   f+g=i
   (C2.a1') a*b=o  (again, the situation is selfdual)
    (C2.a1.b') d+e=:J<g and d+c=g and, as mentioned above, 
      D*e=:j>b, then J+c=g, f+c=f+d+c=f+g=i, j*h=b. 
|A|=13, A(without commas)={oiabcdefghDJj}. Constraints:
(always) f*g=d
 a+b=D
  f+g=i
   a*b=o
    d+e=J d+c=g D*e=j   J+c=g f+c=i j*h=b
Result for A=(H0/C2.a1.b'):  |Sub(A)| = 2586, that is,
sigma(A) = |Sub(A)|*2^(8-|A|) =  80.8125000000000000 .
      Few subuniverses, (C2.a1.b')   is excluded.

 (C2) a+b=:D<d (in addition to f*g=d; selfdual situation)
  (C2.a')   f+g=i
   (C2.a1') a*b=o  (again, the situation is selfdual)
    (C2.a1.c') d+e=:J<g, d+c=:K<g, e+c=g, and D*e=:j>b, then
      J+c=g, K+e=g, J+K=g,  j*h=b 
|A|=14, A(without commas)={oiabcdefghDJjK}. Constraints:
(always) f*g=d
 a+b=D
  f+g=i
   a*b=o
    d+e=J d+c=K e+c=g D*e=j   J+c=g K+e=g J+K=g j*h=b
Result for A=(H0/C2.a1.c'):  |Sub(A)| = 4190, that is,
sigma(A) = |Sub(A)|*2^(8-|A|) =  65.4687500000000000 .
      Few subuniverses, (C2.a1.c')  is excluded.
      Thus, (C2.a1') is excluded (and NO selfduality now)

 (C2) a+b=:D<d (in addition to f*g=d; selfdual situation)%
  (C2.a')   f+g=i
   (C2.a2') a*b=:u>o, then u*c=o 
     (C2.a2.a')   d+e=g, then f+e=f+d+e=f+g=i 
      (C2.a2.a1') D*e=b, then a*e=a*D*e=a*b=u 
       (C2.a2.a1.a)  D+e=g, then a+e=a+b+e=D+e=g
        (C2.a2.a1.a1)  d*e=b, then f*e=f*g*e=d*e=b
         (C2.a2.a1.a1.a) D*h=b, then a*h=a*D*h=a*b=u  
|A|=12, A(without commas)={oiabcdefghDu}. Constraints:
(always) f*g=d
 a+b=D
  f+g=i
   a*b=u   u*c=o
    d+e=g   f+e=i
     D*e=b   a*e=u
      D+e=g    a+e=g
       d*e=b    f*e=b
        D*h=b   a*h=u
Result for A=(H0/C2.a2.a1.a1.a):  |Sub(A)| = 1216, that is,
sigma(A) = |Sub(A)|*2^(8-|A|) =  76.0000000000000000 .
   Few subuniverses, (C2.a2.a1.a1.a)  is excluded.

 (C2) a+b=:D<d (in addition to f*g=d; selfdual situation)
  (C2.a')   f+g=i
   (C2.a2') a*b=:u>o, then u*c=o 
     (C2.a2.a')   d+e=g, then f+e=f+d+e=f+g=i 
      (C2.a2.a1') D*e=b, then a*e=a*D*e=a*b=u 
       (C2.a2.a1.a)  D+e=g, then a+e=a+b+e=D+e=g
        (C2.a2.a1.a1)  d*e=b, then f*e=f*g*e=d*e=b
         (C2.a2.a1.a1.b) D*h=:B>b,  then e*B=b 

|A|=13, A(without commas)={oiabcdefghDuB}. Constraints:
(always) f*g=d
 a+b=D
  f+g=i
   a*b=u   u*c=o
    d+e=g   f+e=i
     D*e=b   a*e=u
      D+e=g    a+e=g
       d*e=b    f*e=b
        D*h=B    e*B=b
Result for A=(H0/C2.a2.a1.a1.b):  |Sub(A)| = 2470, that is,
sigma(A) = |Sub(A)|*2^(8-|A|) =  77.1875000000000000 .
      Few subuniverses, (C2.a2.a1.a1.b) is excluded.
      Thus, (C2.a2.a1.a1) is excluded.

 (C2) a+b=:D<d (in addition to f*g=d; selfdual situation)
  (C2.a')   f+g=i
   (C2.a2') a*b=:u>o, then u*c=o 
     (C2.a2.a')   d+e=g, then f+e=f+d+e=f+g=i 
      (C2.a2.a1') D*e=b, then a*e=a*D*e=a*b=u 
       (C2.a2.a1.a)  D+e=g, then a+e=a+b+e=D+e=g
        (C2.a2.a1.a2) d*e=:x>b, then D*x=b, f*e=f*g*e=d*e=x 
|A|=13, A(without commas)={oiabcdefghDux}. Constraints:
(always) f*g=d
 a+b=D
  f+g=i
   a*b=u   u*c=o
    d+e=g   f+e=i
     D*e=b   a*e=u
      D+e=g    a+e=g
       d*e=x   D*x=b f*e=x
Result for A=(H0/C2.a2.a1.a2):  |Sub(A)| = 2494, that is,
sigma(A) = |Sub(A)|*2^(8-|A|) =  77.9375000000000000 .
      Few subuniverses, (C2.a2.a1.a2) is excluded.
      Thus, (C2.a2.a1.a) is excluded.

 (C2) a+b=:D<d (in addition to f*g=d; seldfual situation)
  (C2.a')   f+g=i
   (C2.a2') a*b=:u>o, then u*c=o 
     (C2.a2.a')   d+e=g, then f+e=f+d+e=f+g=i 
      (C2.a2.a1') D*e=b, then a*e=a*D*e=a*b=u 
       (C2.a2.a1.b) D+e=:v<g, then d+v=g and a+e=a+b+e=D+e=v 
|A|=14, A(without commas)={oiabcdefghDuvx}. Constraints:
(always) f*g=d
 a+b=D
  f+g=i
   a*b=u   u*c=o
    d+e=g   f+e=i
     D*e=b   a*e=u
      D+e=v  d+v=g a+e=v
Result for A=(H0/C2.a2.a1.b):  |Sub(A)| = 5272, that is,
sigma(A) = |Sub(A)|*2^(8-|A|) =  82.3750000000000000 .
      Few subuniverses, (C2.a2.a1.b)  is excluded.
      Thus, (C2.a2.a1') is excluded.

 (C2) a+b=:D<d (in addition to f*g=d; selfdual situation)%
  (C2.a')   f+g=i
   (C2.a2') a*b=:u>o, then u*c=o 
     (C2.a2.a')   d+e=g, then f+e=f+d+e=f+g=i 
      (C2.a2.a2') D*e=:B>b and D*h=b; 
        then B*h=b and a*h=a*D*h=a*b=u  
|A|=13, A(without commas)={oiabcdefghDuB}. Constraints:
(always) f*g=d
 a+b=D
  f+g=i
   a*b=u   u*c=o
    d+e=g   f+e=i
     D*e=B D*h=b  B*h=b a*h=u
Result for A=(H0/C2.a2.a2'):  |Sub(A)| = 2542, that is,
sigma(A) = |Sub(A)|*2^(8-|A|) =  79.4375000000000000 .
      Few subuniverses, (C2.a2.a2')  is excluded.

 (C2) a+b=:D<d (in addition to f*g=d; selfdual situation)%
  (C2.a')   f+g=i
   (C2.a2') a*b=:u>o, then u*c=o 
     (C2.a2.a')   d+e=g, then f+e=f+d+e=f+g=i 
      (C2.a2.a3') D*e=:B>b,  D*h=H>b, and e*h=b; 
          then B*h=b, H*e=b, B*H=b 

|A|=14, A(without commas)={oiabcdefghDuBH}. Constraints:
(always) f*g=d
 a+b=D
  f+g=i
   a*b=u   u*c=o
    d+e=g   f+e=i
     D*e=B D*h=H e*h=b   B*h=b H*e=b B*H=b
Result for A=(H0/C2.a2.a3'):  |Sub(A)| = 4170, that is,
sigma(A) = |Sub(A)|*2^(8-|A|) =  65.1562500000000000 .
      Few subuniverses, (C2.a2.a3')  is excluded.
      Thus, (C2.a2.a') is excluded.

 (C2) a+b=:D<d (in addition to f*g=d; selfdual situation)%
  (C2.a')   f+g=i
   (C2.a2') a*b=:u>o, then u*c=o 
     (C2.a2.b') d+e=:G<g and d+c=g, 
        then G+c=g and f+c=f+d+c=f+g=i
      (C2.a2.b1) d*e=b, then f*e=f*g*e=d*e=b and D*e=b 
|A|=13, A(without commas)={oiabcdefghDuG}. Constraints:
(always) f*g=d
 a+b=D
  f+g=i
   a*b=u   u*c=o
    d+e=G d+c=g   G+c=g f+c=i
     d*e=b   f*e=b D*e=b
Result for A=(H0/C2.a2.b1):  |Sub(A)| = 2566, that is,
sigma(A) = |Sub(A)|*2^(8-|A|) =  80.1875000000000000 .
      Few subuniverses, (C2.a2.b1) is excluded.

 (C2) a+b=:D<d (in addition to f*g=d; selfdual situation)%
  (C2.a')   f+g=i
   (C2.a2') a*b=:u>o, then u*c=o 
     (C2.a2.b') d+e=:G<g and d+c=g, 
         then G+c=g and f+c=f+d+c=f+g=i
      (C2.a2.b2) d*e=:B>b, then f*e=f*g*e=d*e=B  
       (C2.a2.b2.a) f+h=i 
        (C2.a2.b2.a1) g+h=i
|A|=14, A(without commas)={oiabcdefghDuGB}. Constraints:
(always) f*g=d
 a+b=D
  f+g=i
   a*b=u   u*c=o
    d+e=G d+c=g   G+c=g f+c=i
     d*e=B   f*e=B
      f+h=i
       g+h=i
Result for A=(H0/C2.a2.b2.a1):  |Sub(A)| = 4833, that is,
sigma(A) = |Sub(A)|*2^(8-|A|) =  75.5156250000000000 .
      Few subuniverses, (C2.a2.b2.a1)  is excluded.

 (C2) a+b=:D<d (in addition to f*g=d; selfdual situation)%
  (C2.a')   f+g=i
   (C2.a2') a*b=:u>o, then u*c=o 
     (C2.a2.b') d+e=:G<g and d+c=g, 
        then G+c=g and f+c=f+d+c=f+g=i
      (C2.a2.b2) d*e=:B>b, then f*e=f*g*e=d*e=B  
       (C2.a2.b2.a) f+h=i 
        (C2.a2.b2.a2) g+h=:p<i, then f+p=i
|A|=15, A(without commas)={oiabcdefghDuGBp}. Constraints:
(always) f*g=d
 a+b=D
  f+g=i
   a*b=u   u*c=o
    d+e=G d+c=g   G+c=g f+c=i
     d*e=B   f*e=B
      f+h=i
       g+h=p   f+p=i
Result for A=(H0/C2.a2.b2.a2):  |Sub(A)| = 9155, that is,
sigma(A) = |Sub(A)|*2^(8-|A|) =  71.5234375000000000 .
      Few subuniverses, (C2.a2.b2.a2)  is excluded.
      Thus, (C2.a2.b2.a) is excluded.

 (C2) a+b=:D<d (in addition to f*g=d; selfdual situation)%
  (C2.a')   f+g=i
   (C2.a2') a*b=:u>o, then u*c=o 
     (C2.a2.b') d+e=:G<g and d+c=g, 
         then G+c=g and f+c=f+d+c=f+g=i
      (C2.a2.b2) d*e=:B>b, then f*e=f*g*e=d*e=B  
       (C2.a2.b2.b) f+h=:s<i then s+g=i 
|A|=15, A(without commas)={oiabcdefghDuGBs}. Constraints:
(always) f*g=d
 a+b=D
  f+g=i
   a*b=u   u*c=o
    d+e=G d+c=g   G+c=g f+c=i
     d*e=B   f*e=B
      f+h=s  s+g=i
Result for A=(H0/C2.a2.b2.b):  |Sub(A)| = 9384, that is,
sigma(A) = |Sub(A)|*2^(8-|A|) =  73.3125000000000000 .
      Few subuniverses, (C2.a2.b2.b) is excluded.
      Thus, (C2.a2.b2) is excluded.
      Thus, (C2.a2.b') is excluded.

 (C2) a+b=:D<d (in addition to f*g=d; selfdual situation)%
  (C2.a')   f+g=i
   (C2.a2') a*b=:u>o, then u*c=o 
     (C2.a2.c') d+e=:G<g, d+c=:s<g and e+c=g, 
        then G+c=g, e+s=g and G+s=g
|A|=15, A(without commas)={oiabcdefghDuGBs}. Constraints:
(always) f*g=d
 a+b=D
  f+g=i
   a*b=u   u*c=o
    d+e=G d+c=s e+c=g  G+c=g e+s=g G+s=g
Result for A=(H0/C2.a2.c'):  |Sub(A)| = 10092, that is,
sigma(A) = |Sub(A)|*2^(8-|A|) =  78.8437500000000000 .
      Few subuniverses, (C2.a2.c') is excluded.
      Thus, (C2.a2') is excluded.

 (C2) a+b=:D<d (in addition to f*g=d; selfdual situation)%
  (C2.a')   f+g=i
   (C2.a3') a*b=:u>o, a*c=:v>o, and b*c=o, 
        then u*c=o, b*v=o and u*v=o 
     (C2.a3.a')   d+e=g, then f+e=f+d+e=f+g=i
      (C2.a3.a1') D*e=b, then a*e=a*D*e=a*b=u
|A|=13, A(without commas)={oiabcdefghDuv}. Constraints:
(always) f*g=d
 a+b=D
  f+g=i
   a*b=u a*c=v b*c=o   u*c=o b*v=o u*v=o
    d+e=g  f+e=i
     D*e=b   a*e=u
Result for A=(H0/C2.a3.a1'):  |Sub(A)| = 2476, that is,
sigma(A) = |Sub(A)|*2^(8-|A|) =  77.3750000000000000 .
      Few subuniverses, (C2.a3.a1') is excluded.

 (C2) a+b=:D<d (in addition to f*g=d; selfdual situation)%
  (C2.a')   f+g=i
   (C2.a3') a*b=:u>o, a*c=:v>o, and b*c=o, 
       then u*c=o, b*v=o and u*v=o 
     (C2.a3.a')   d+e=g, then f+e=f+d+e=f+g=i
      (C2.a3.a2') D*e=:w>b and D*h=b, 
         then w*h=b and a*h=a*D*h=a*b=u
|A|=14, A(without commas)={oiabcdefghDuvw}. Constraints:
(always) f*g=d
 a+b=D
  f+g=i
   a*b=u a*c=v b*c=o   u*c=o b*v=o u*v=o
    d+e=g  f+e=i
     D*e=w D*h=b   w*h=b a*h=u
Result for A=(H0/C2.a3.a2'):  |Sub(A)| = 3897, that is,
sigma(A) = |Sub(A)|*2^(8-|A|) =  60.8906250000000000 .
      Few subuniverses, (C2.a3.a2') is excluded.

 (C2) a+b=:D<d (in addition to f*g=d; selfdual situation)%
  (C2.a')   f+g=i
   (C2.a3') a*b=:u>o, a*c=:v>o, and b*c=o, 
           then u*c=o, b*v=o and u*v=o 
     (C2.a3.a')   d+e=g, then f+e=f+d+e=f+g=i
      (C2.a3.a3') D*e=:w>b, D*h=:W>b, and e*h=b, 
         then w*h=b, e*W=b, and w*W=b
|A|=15, A(without commas)={oiabcdefghDuvwW}. Constraints:
(always) f*g=d
 a+b=D
  f+g=i
   a*b=u a*c=v b*c=o   u*c=o b*v=o u*v=o
    d+e=g  f+e=i
     D*e=w D*h=W e*h=b   w*h=b  e*W=b   w*W=b
Result for A=(H0/C2.a3.a3'):  |Sub(A)| = 6351, that is,
sigma(A) = |Sub(A)|*2^(8-|A|) =  49.6171875000000000 .
      Few subuniverses, (C2.a3.a3') is excluded.
      Thus, (C2.a3.a') is excluded.

 (C2) a+b=:D<d (in addition to f*g=d; selfdual situation)%
  (C2.a')   f+g=i
   (C2.a3') a*b=:u>o, a*c=:v>o, and b*c=o, 
        then u*c=o, b*v=o and u*v=o 
     (C2.a3.b')   d+e=:G<g and d+c=g, 
         then G+c=g and f+c=f+d+c=f+g=i
|A|=14, A(without commas)={oiabcdefghDuvG}. Constraints:
(always) f*g=d
 a+b=D
  f+g=i
   a*b=u a*c=v b*c=o   u*c=o b*v=o u*v=o
    d+e=G d+c=g   G+c=g f+c=i
Result for A=(H0/C2.a3.b'):  |Sub(A)| = 5112, that is,
sigma(A) = |Sub(A)|*2^(8-|A|) =  79.8750000000000000 .
      Few subuniverses, (C2.a3.b') is excluded.

 (C2) a+b=:D<d (in addition to f*g=d; selfdual situation)%
  (C2.a')   f+g=i
   (C2.a3') a*b=:u>o, a*c=:v>o, and b*c=o, 
        then u*c=o, b*v=o and u*v=o 
     (C2.a3.c') d+e=:G<g, d+c=:s<g, and e+c=g, 
          then G+c=g, e+s=g, G+s=g 
|A|=15, A(without commas)={oiabcdefghDuvGs}. Constraints:
(always) f*g=d
 a+b=D
  f+g=i
   a*b=u a*c=v b*c=o   u*c=o b*v=o u*v=o
    d+e=G d+c=s e+c=g   G+c=g e+s=g G+s=g
Result for A=(H0/C2.a3.c'):  |Sub(A)| = 8018, that is,
sigma(A) = |Sub(A)|*2^(8-|A|) =  62.6406250000000000 .
      Few subuniverses, (C2.a3.c') is excluded.
      Thus, (C2.a3') is excluded.
      Thus, (C2.a') is excluded.
  Note that (C2) describes a selfdual situation. Hence, the
  dual of (C2.a') is also excluded, i.e., a*b>o; this will be
  used in (C2.b') and (C2.c') !!!

 (C2) a+b=:D<d (in addition to f*g=d; selfdual situation)%
  (C2.b') f+g=:I<i, f+h=i, a*b=:Z>o,  then I+h=i and Z*c=o
    (C2.b1')  d+e=g, then f+e=f+d+e=f+g=I
     (C2.b1.a') D*e=b, then a*e=a*D*e=a*b=Z  
      (C2.b1.a1) a*c=o

|A|=13, A(without commas)={oiabcdefghDIZ}. Constraints:
(always) f*g=d
 a+b=D
  f+g=I f+h=i a*b=Z   I+h=i Z*c=o
   d+e=g   f+e=I
    D*e=b   a*e=Z
     a*c=o
Result for A=(H0/C2.b1.a1):  |Sub(A)| = 2578, that is,
sigma(A) = |Sub(A)|*2^(8-|A|) =  80.5625000000000000 .
      Few subuniverses, (C2.b1.a1) is excluded.

 (C2) a+b=:D<d (in addition to f*g=d; selfdual situation)%
  (C2.b') f+g=:I<i, f+h=i, a*b=:Z>o,  then I+h=i and Z*c=o
      (C2.b1')  d+e=g, then f+e=f+d+e=f+g=I
     (C2.b1.a') D*e=b, then a*e=a*D*e=a*b=Z  
      (C2.b1.a2) a*c=:x>o, then b*c=o, b*x=o, and Z*x=o
|A|=14, A(without commas)={oiabcdefghDIZx}. Constraints:
(always) f*g=d
 a+b=D
  f+g=I f+h=i a*b=Z   I+h=i Z*c=o
   d+e=g   f+e=I
    D*e=b   a*e=Z
     a*c=x   b*c=o b*x=o  Z*x=o
Result for A=(H0/C2.b1.a2):  |Sub(A)| = 4117, that is,
sigma(A) = |Sub(A)|*2^(8-|A|) =  64.3281250000000000 .
      Few subuniverses, (C2.b1.a2) is excluded.
      Thus, (C2.b1.a') is excluded.

 (C2) a+b=:D<d (in addition to f*g=d; selfdual situation)%
  (C2.b') f+g=:I<i, f+h=i, a*b=:Z>o, then I+h=i and Z*c=o
    (C2.b1')  d+e=g, then f+e=f+d+e=f+g=I
     (C2.b1.b')D*e=:B>b and D*h=b,then B*h=b, a*h=a*D*h=a*b=Z
|A|=14, A(without commas)={oiabcdefghDIZB}. Constraints:
(always) f*g=d
 a+b=D
  f+g=I f+h=i a*b=Z   I+h=i Z*c=o
   d+e=g   f+e=I
    D*e=B D*h=b   B*h=b a*h=Z
Result for A=(H0/C2.b1.b'):  |Sub(A)| = 4518, that is,
sigma(A) = |Sub(A)|*2^(8-|A|) =  70.5937500000000000 .
      Few subuniverses, (C2.b1.b') is excluded.

 (C2) a+b=:D<d (in addition to f*g=d; selfdual situation)%
  (C2.b') f+g=:I<i, f+h=i, a*b=:Z>o,  then I+h=i and Z*c=o
    (C2.b1')  d+e=g, then f+e=f+d+e=f+g=I
     (C2.b1.c') D*e=:B>b, D*h=:u>b, and e*h=b, 
         then B*h=b, e*u=b, and B*u=b  
|A|=15, A(without commas)={oiabcdefghDIZBu}. Constraints:
(always) f*g=d
 a+b=D
  f+g=I f+h=i a*b=Z   I+h=i Z*c=o
   d+e=g   f+e=I
    D*e=B D*h=u e*h=b   B*h=b e*u=b B*u=b
Result for A=(H0/C2.b1.c'):  |Sub(A)| = 7340, that is,
sigma(A) = |Sub(A)|*2^(8-|A|) =  57.3437500000000000 .
      Few subuniverses, (C2.b1.c') is excluded.
      Thus, (C2.b1') is excluded.
 Since (C2.b') describes a selfdual situations, the dual of
 (C2.b1') is also excluded, i.e., D*e>b; this will be used 
 in (C2.b2') and (C2.b3') !   

 (C2) a+b=:D<d (in addition to f*g=d; selfdual situation)%
  (C2.b') f+g=:I<i, f+h=i, a*b=:Z>o,  then I+h=i and Z*c=o
    (C2.b2') d+e=:G<g, d+c=g, D*e=:B>b, 
      then  G+c=g, B*h=b, f+c=f+d+c=f+g=I
|A|=15, A(without commas)={oiabcdefghDIZGB}. Constraints:
(always) f*g=d
 a+b=D
  f+g=I f+h=i a*b=Z   I+h=i Z*c=o
   d+e=G d+c=g D*e=B   G+c=g B*h=b f+c=I
Result for A=(H0/C2.b2'):  |Sub(A)| = 8162, that is,
sigma(A) = |Sub(A)|*2^(8-|A|) =  63.7656250000000000 .
      Few subuniverses, (C2.b2') is excluded.

 (C2) a+b=:D<d (in addition to f*g=d; selfdual situation)%
  (C2.b') f+g=:I<i, f+h=i, a*b=:Z>o,  then I+h=i and Z*c=o
      (C2.b3') d+e=:G<g, d+c=:p<g, e+c=g, and  D*e=:B>b, then 
               G+c=g, p+e=g, G+p=g, and B*h=b 
|A|=16, A(without commas)={oiabcdefghDIZGBp}. Constraints:
(always) f*g=d
 a+b=D
  f+g=I f+h=i a*b=Z   I+h=i Z*c=o
   d+e=G d+c=p e+c=g D*e=B    G+c=g p+e=g G+p=g B*h=b
Result for A=(H0/C2.b3'):  |Sub(A)| = 13168, that is,
sigma(A) = |Sub(A)|*2^(8-|A|) =  51.4375000000000000 .
      Few subuniverses, (C2.b3') is excluded.
      Thus, (C2.b') is excluded.      
 Since (C2) is a selfdual situation in which (C2.a') and
 (C2.b') are excluded, their duals are also excluded, i.e.,
  a*b>o and a*c>o; this will be used in (C2.c')!

 (C2) a+b=:D<d (in addition to f*g=d; selfdual situation)%
  (C2.c')f+g=:I<i, f+h=:Q<i, g+h=i, a*b=:Z>o, a*c:=q>o,b*c=o, 
     then I+h=i, g+Q=i, I+Q=i,  Z*c=o, b*q=o, and Z*q=o.
|A|=15, A(without commas)={oiabcdefghDIZQq}. Constraints:
(always) f*g=d
 a+b=D
  f+g=I f+h=Q g+h=i  a*b=Z a*c=q b*c=o   I+h=i g+Q=i I+Q=i
  Z*c=o b*q=o Z*q=o
Result for A=(H0/C2.c'):  |Sub(A)| = 9800, that is,
sigma(A) = |Sub(A)|*2^(8-|A|) =  76.5625000000000000 .
      Few subuniverses, (C2.c') is excluded.
      Thus, (C2) is excluded.
 This proves that H_0 cannot be a subposet of a finite 
 lattice with many subuniverses. Q.e.d.

The computation took 265/1000 seconds.
\end{verbatim}
\normalfont

\clearpage
\def\appendixhead{{Cz\'edli: Eighty-three sublattices / Appendix: $K_5$}}
\markboth\appendixhead\appendixhead
\centerline{\bf{{\LARGE Appendix: $K_5$}}}
\normalfont
\begin{verbatim}
Version of December 31, 2018; reformulated May 7, 2019
We deal with the auxiliary lattice K_5 from the paper
|A|=12, A(without commas)={oiabcdefghjk}. Constraints:
(always) e+f=g e*f=c g+h=k c*d=b a*b=o j*k=i;as in the paper
  (whence) e*d=b a*d=o; since e*d=e*f*d=c*d=b  a*d=a*c*d=a*b=o
  (and) e+h=k j+h=i; since e+h=e+f+h=g+h=k and j+h=j+g+h=j+k=i
Result for A=K_5:  |Sub(A)| = 1558, that is,
sigma(A) = |Sub(A)|*2^(8-|A|) =  97.3750000000000000 .
We are not ready yet.     

|A|=12, A(without commas)={oiabcdefghjk}. Constraints:
(always) e+f=g e*f=c g+h=k c*d=b a*b=o j*k=i;as in the paper
  (whence) e*d=b a*d=o; since e*d=e*f*d=c*d=b  a*d=a*c*d=a*b=o
  (and) e+h=k j+h=i; since e+h=e+f+h=g+h=k and j+h=j+g+h=j+k=i
 (C1) c+d=f g*h=f ; Case (C1); see the paper
    (so) e+d=g e*h=c ; since e+d=e+c+d=e+f=g and e*h=e*g*h=e*f=c
Result for A=K_5:  |Sub(A)| = 1267, that is,
sigma(A) = |Sub(A)|*2^(8-|A|) =  79.1875000000000000 .
This case is excluded.

|A|=13, A(without commas)={oiabcdexyghjk}. Constraints:
(instead of always) g+h=k c*d=b a*b=o j*k=i ;  f is removed
  (whence) e*d=b a*d=o ; since e*d=e*f*d=c*d=b  a*d=a*c*d=a*b=o
  (and) e+h=k j+h=i ; since e+h=e+f+h=g+h=k and j+h=j+g+h=j+k=i
 (C2) c+d=x g*h=y ; c+d=:x and g*h=:y   see the paper
    (so) e+y=g e*x=c ; since e+f=g and e*f=c
Result for A=K_5:  |Sub(A)| = 2578, that is,
sigma(A) = |Sub(A)|*2^(8-|A|) =  80.5625000000000000 .
This case is excluded. All cases have been excluded. Q.e.d  

The computation took 16/1000 seconds.
\end{verbatim}
\normalfont

\clearpage
\def\appendixhead{{Cz\'edli: Eighty-three sublattices / Appendix: Sample input}}
\markboth\appendixhead\appendixhead
\centerline{\bf{{\LARGE Appendix: Sample input}}}
\normalfont
\begin{verbatim}
\P Version of December 30, 2018; reformatted and the last
\P two partial algebras added on May 7, 2019.
% This is a sample data file. Do not change its STRUCTURE!
% In particular, do not change the order of the command, 
% and do not leave a command. 
% For instruction, see the end of this file, and see the comment 
% lines in the data part.
\verbose=false 
% You may also choose: \verbose=true; but the difference is minimal.
\subtrahend-in-exponent=8 
% Small natural number, for another form of |Sub(A)|
\operationsymbols=+* % Only for information; 
% Although this this command is obligatory, the program does NOT
% use any property of the operations; only the syntax is checked.
% Empty lines are allowed but neglected. Everything is 
% case-sensitive. "%" introduces comments that are dropped, but
% <backslach>P introduces comments that are copied into the
% output file. Both forms of comments are optional, of course.
% There is a third form of comment but only in the list of
% constraints; it is carried over the output file and it 
% starts with "\w"; see later for examples.
% And, only in the "constraints part", there is a fourth way:
% namely, we can use a "parenthesized command"; see later.

\beginjob
%
\P    First,  we are dealing with the pentagon lattice.
% Any line containing the <backslash>P command is printed into the 
% output file and to the screen. <backslash>P is suggested only 
% after places where the program automatically prints a line-break.

\name
N_5  %The name of the algebra
% Here a backslash>P  line is not suggested because of the 
% possibility of an ugly line-break, even if such a line would not 
% disturb the computation.
\size
 5  %The size of the algebra
\elements
01abc   % the elements of the algebra; a single character each, 
        % case-sensitive, without spaces
\constraints 
(Case-name1)  a*b=0,  a+b=1 \w Explanation/1 (to be output). 
% In this "constraints" part, use \w, because \P is not allowed here!
% the constraints are separated by spaces or commas, multiple 
% separators are allowed. The "\w "command is allowed only here, 
% in the scope of the \constraints command The parenthesized comment 
% is optional; if it is present then it is printed. Note the syntax:
% either there are no parentheses, or there are exactly one opening 
% one and exactly one closing one in the same line later
 (Subcase-name) c*b=0  \w Explanation/2 to be printed % 
  (Sub-subcase-name)   c+b=1 \w Explanation/3 to be printed % 
% The spaces preceding the opening parenthesis are also printed.
% This feature and the parenthesized part are very appropriate to 
% organize our work.
\endofjob

\P Comments on the result above (free text)
\P    (free text, continued)
 
\beginjob
\name
M_3 %The name of the algebra
\size
 5  %size of the algebra
\elements
01abc
\constraints
(C1) a*b=0 \w %The empty print gives only a colon and a line break.
 (C1a)  a+b=1  \w  but this algebra needs no cases to distinguish! 
   (C1a.1)  b*c=0 \w "C" in parentheses stands for "Case" 
    (C1a.1b) b+c=1 c*a=0,  c+a=1 
\endofjob

\beginjob
\P We are dealing with the 6-element modular lattice of length 2
\name
M_4 %The name of the algebra
\size
 6  %size of the algebra
\elements
01abcd
\constraints
a*b=0 a+b=1 a*c=0 a+c=1 a*d=0 a+d=1 \w No cases are needed.
b*c=0 b+c=1 b*d=0 b+d=1 c*d=0 c+d=1  
\endofjob

\P
\P The next two partial algebras together are to indicate 
\P the difficulty in the paper

\beginjob
\name
without x
\size 
7
\elements
cdefgoi
\constraints 
c+e=g, f*g=d    
\endofjob

\beginjob
\name
with x
\size 
8
\elements
cdefgoix
\constraints 
c+e=x, f*g=d   
\endofjob

\enddata

The rest of the file will not be processed by the program!

                  INSTRUCTION     
%*******************************************************************

In such a file, arbitrarily many partial algebras can be given. 
(Here, we give only five.) Percentage sign (%) means that the rest  
of the line is only a comment, which does not influence the 
computation. Note that if we have a unary operation f and f(x)=y, 
then we have to type the corresponding constraint as  xfx=y .

Notes on the syntax: no line can exceed 250 characters but it is 
suggested that 80 characters  should not be exceeded. The 
constraints are separated by commas of spaces, in a single line 
or in more lines. The elements of the algebra should be denoted 
by letters or decimals; some characters are forbidden, and the 
characters are case-sensitive.

Also, study the comments on the syntax in the comment lines that 
occur among the data above.

Improper data will lead to irregular termination of the program.

                  ALGORITHM
%*******************************************************************


The program lists ALL subsets of the base set and counts those that
are closed with respect to all constraints. No property of the 
operations are taken into account. In fact, even multiple valued 
partial operations are allowed; for example,  xfy=u  and xfy=v 
can both occur among the constraints. Since the running time is 
exponential, the program is not effective for large partial 
algebras. For |A|<26, the program is likely to compute the result
in a reasonable time in any reasonable computer. For |A|>30, this is 
not always so. In addition to |A|, the running time depends also 
on the constraints.

                  AS IT IS

The program is for Windows 10; it is likely to run under Windows 7 
or higher. The program is "as it is", nothing is guaranteed.
\end{verbatim}
\normalfont

\clearpage
\def\appendixhead{{Cz\'edli: Eighty-three sublattices / Appendix: Small Kelly--Rival lattices}}
\markboth\appendixhead\appendixhead
\centerline{\bf{{\LARGE Appendix: Small Kelly--Rival lattices}}}
\normalfont
\begin{verbatim}
Version of December 26, 2018; reformatted May 7, 2019
Lattices from the Kelly-Rival: 
Planar lattices paper and some other lattices

A_0 is the 8-element boolean lattice with edges
 oa ab oc Ai Bi Ci aB aC bA bC cA cB
|A|=8, A(without commas)={oiabcABC}. Constraints:
a+b=C a+c=B a+A=i  b+c=A b+B=i  c+C=i
a*b=o a*c=o a*A=o  b*c=o b*B=o  c*C=o
A*B=c A+B=i  A*C=b A+C=i  B*C=a B+C=i
Result for A=A_0:  |Sub(A)| = 74, that is,
sigma(A) = |Sub(A)|*2^(8-|A|) =  74.0000000000000000 .

B is the lattice with edges 
 oa ab oc od ae be bf bg cf dg ei fi gi
|A|=9, A(without commas)={oiabcdefg}. Constraints:
a*b=o a+b=e a*c=o a+c=i a*d=o a+d=i a*f=o a+f=i a*g=o a+g=i
b*c=o b+c=f b*d=o b+d=g  c*d=o c+d=i c*e=o c+e=i c*g=o c+g=i
d*e=o d+e=i d*f=o d+f=i   e*f=b e+f=i e*g=b e+g=i  f*g=b f+g=i
Result for A=B:  |Sub(A)| = 108, that is,
sigma(A) = |Sub(A)|*2^(8-|A|) =  54.0000000000000000 .

C is the lattice with edges 
 ai bi ci da db eb ec fb gd ge og of
|A|=9, A(without commas)={oiabcdefg}. Constraints:
a*b=d a+b=i a*c=g a+c=i a*e=g a+e=i a*f=o a+f=i  b*c=e b+c=i
c*d=g c+d=i c*f=o c+f=i  d*e=g d+e=b d*f=o d+f=b
e*f=o e+f=b   f*g=o f+g=b
Result for A=C:  |Sub(A)| = 137, that is,
sigma(A) = |Sub(A)|*2^(8-|A|) =  68.5000000000000000 .

D is the lattice with edges 
 oa ob ac ae ad be cf dg ef eg fi gi
|A|=9, A(without commas)={oiabcdefg}. Constraints:
a+b=e a*b=o   b+c=f b*c=o b+d=g b*d=o
c*d=a c+d=i c*e=a c+e=f c*g=a c+g=i
d*e=a d+e=g d*f=a d+f=i   f*g=e f+g=i
Result for A=D:  |Sub(A)| = 152, that is,
sigma(A) = |Sub(A)|*2^(8-|A|) =  76.0000000000000000 .

E_0 is the lattice with edges 
 ai bi ci db ea ed fd fc gb oe of og
|A|=9, A(without commas)={oiabcdefg}. Constraints:
a+b=i a*b=e a+c=i a*c=o a+d=i a*d=e a+f=i a*f=o a+g=i a*g=o
b+c=i b*c=f  c+d=i c*d=f c+e=i c*e=o c*g=o c+g=i  d+g=b d*g=o
e+f=d e*f=o  e+g=b e*g=o  f+g=b f*g=o
Result for A=E_0:  |Sub(A)| = 121, that is,
sigma(A) = |Sub(A)|*2^(8-|A|) =  60.5000000000000000 .

E_1 is the lattice with edges
 ai bi ca da ei fb fc gc gd hd he ja of og oh oj 
|A|=11, A(without commas)={oiabcdefghj}. Constraints:
a*b=f a+b=i a*e=h a+e=i
b*c=f b+c=i b*d=o b+d=i b*e=o b+e=i b*g=o b+g=i
      b*h=o b+h=i b*j=o b+j=i
c*d=g c+d=a c*e=o c+e=i c*h=o c+h=a c*j=o c+j=a
d*e=h d+e=i d*f=o d+f=a d*j=o d+j=a
e*f=o e+f=i e*g=o e+g=i e*j=o e+j=i
f*g=o f+g=c f*h=o f+h=a f*j=o f+j=a
g*h=o g+h=d g*j=o g+j=a  h*j=o h+j=a
Result for A=E_1:  |Sub(A)| = 249, that is,
sigma(A) = |Sub(A)|*2^(8-|A|) =  31.1250000000000000 .

F_0 is the lattice with edges
 ai bi ca da eb ec fe fd gc of og
|A|=9, A(without commas)={oiabcdefg}. Constraints:
a+b=i a*b=e  b+c=i b*c=e b+d=i b*d=f b+g=i b*g=o
c+d=a c*d=f   d+e=a d*e=f  d*g=o d+g=a  e+g=c e*g=o  f+g=c f*g=o
Result for A=F_0:  |Sub(A)| = 166, that is,
sigma(A) = |Sub(A)|*2^(8-|A|) =  83.0000000000000000 .

F_1 is the lattice with edges 
 oa od ab ac ah be bf cf cg dg ei fj gj hj ji
|A|=11, A(without commas)={oiabcdefghj}. Constraints:
a*d=o a+d=g   b*c=a b+c=f b*d=o b+d=j b*g=a b+g=j b*h=a b+h=j
c*d=o c+d=g c*e=a c+e=i c*h=a c+h=j
d*e=o d+e=i d*f=o d+f=j d*h=o d+h=j
e*f=b e+f=i e*g=a e+g=i e*h=a e+h=i e*j=b e+j=i
f*g=c f+g=j f*h=a f+h=j    g*h=a g+h=j
Result for A=F_1:  |Sub(A)| = 329, that is,
sigma(A) = |Sub(A)|*2^(8-|A|) =  41.1250000000000000 .

G_0 is the lattice with edges 
 oa ob ac ad ag bd ce de df eh ej fj gj hi ji
|A|=11, A(without commas)={oiabcdefghj}. Constraints:
a*b=o a+b=d   b*c=o b+c=e b*g=o b+g=j  c*d=a c+d=e c*f=a c+f=j c*g=a c+g=j
d*g=a d+g=j  e*f=d e+f=j e*g=a e+g=j  f*g=a f+g=j f*h=d f+h=i
g*h=a g+h=i   h*j=e h+j=i
Result for A=G_0:  |Sub(A)| = 434, that is,
sigma(A) = |Sub(A)|*2^(8-|A|) =  54.2500000000000000 .

H_0 is the lattice with edges
 oa ob oc ad bd be bh cg df dg eg fi gi hi
|A|=10, A(without commas)={oiabcdefgh}. Constraints:
a*b=o a+b=d a*c=o a+c=g  a*e=o a+e=g a*h=o a+h=i  b*c=o b+c=g
c*d=o c+d=g  c*e=o c+e=g c*f=o c+f=i c*h=o c+h=i
d*e=b d+e=g d*h=b d+h=i   e*f=b e+f=i e*h=b e+h=i
f*g=d f+g=i f*h=b f+h=i   g*h=b g+h=i
Result for A=H_0:  |Sub(A)| = 199, that is,
sigma(A) = |Sub(A)|*2^(8-|A|) =  49.7500000000000000 .

The computation took 62/1000 seconds.
\end{verbatim}
\normalfont

\clearpage
\def\appendixhead{{G\'abor Cz\'edli: Eighty-three sublattices and planarity}}
\markboth\appendixhead\appendixhead
\normalfont
\normalsize
\end{document}